\theoremstyle{thmstyleone}%
\newtheorem{theorem}{Theorem}%  meant for continuous numbers
\newtheorem{lemma}{Lemma}%
\theoremstyle{thmstyletwo}%
\newtheorem{remark}{Remark}%
\theoremstyle{thmstylethree}%
\begin{document}

\title[An Adaptive Cubic Regularisation Algorithm Based on Interior-Point Methods for Optimization with General Inequality Constraints]{An Adaptive Cubic Regularisation Algorithm Based on Interior-Point Methods for Optimization with General Inequality Constraints}

%%=============================================================%%
%% GivenName	-> \fnm{Joergen W.}
%% Particle	-> \spfx{van der} -> surname prefix
%% FamilyName	-> \sur{Ploeg}
%% Suffix	-> \sfx{IV}
%% \author*[1,2]{\fnm{Joergen W.} \spfx{van der} \sur{Ploeg} 
%%  \sfx{IV}}\email{iauthor@gmail.com}
%%=============================================================%%

%\author*[1]{\fnm{Yonggang} \sur{Pei}}\email{peiyg@163.com}
\author[1]{\fnm{Yonggang} \sur{Pei}}\email{peiyg@htu.edu.cn}

\author[1]{\fnm{Jingyi} \sur{Guo}}\email{gggggjy104@163.com}
%\equalcont{These authors contributed equally to this work.}

\author[2]{\fnm{Detong} \sur{Zhu}}\email{dtzhu@shnu.edu.cn}

%\affil*[1]{\orgdiv{Engineering Laboratory for Big Data Statistical Analysis and Optimal Control, College of Mathematics and Information Science}, \orgname{Henan Normal University}, \orgaddress{\street{Jianshe East Road}, \city{Xinxiang}, \postcode{453007}, \state{State}, \country{Country}}}

\affil[1]{\orgdiv{School of Mathematics and Statistics}, \orgname{Henan Normal University}, \city{Xinxiang},\orgaddress{\street{} \postcode{453007},~\state{}\country{China}}}

\affil[2]{\orgdiv{Mathematics and Science College}, \orgname{Shanghai Normal University},\orgaddress{\street{} \city{Shanghai}, \postcode{200234},~\state{}\country{China}}}

%%==================================%%
%% Sample for unstructured abstract %%
%%==================================%%

\abstract{Nonlinear constrained optimization has a wide range of practical applications. The interior-point method is considered to be one of the most powerful algorithms for solving nonlinear inequality constrained optimization. In this paper, we consider optimization with general inequality constraints and propose an Adaptive Regularisation algorithm using Cubics Based on Interior-Point methods (ARCBIP). For solving the barrier problem, we construct ARC subproblem with linearized constraints and the well-known fraction to the boundary rule that prevents slack variables from approaching their lower bounds prematurely. 
%The ARC subproblem in ARCBIP can avoid incompatibility of the intersection of linearized constraints with trust-region bounds in trust-region methods. 
We employ a composite-step approach and reduced Hessian methods to deal with linearized constraints, where the trial step is decomposed into a normal step and a tangential step. They are obtained by solving two ARC subproblems approximately with the fraction to the boundary rule. Requirements on normal steps and tangential steps are given to ensure global convergence. To determine whether the trial step is accepted, we use exact penalty function as the merit function in ARC framework. The updating of the barrier parameter is implemented by adaptive strategies. Global convergence is analyzed under mild assumptions. Preliminary numerical experiments and some comparison results are reported.
}

%%================================%%
%% Sample for structured abstract %%
%%================================%%

\keywords{Nonlinear constrained optimization, Interior-point method, Adaptive regularisation using cubics, Global convergence}

%%\pacs[JEL Classification]{D8, H51}

%%\pacs[MSC Classification]{35A01, 65L10, 65L12, 65L20, 65L70}

\maketitle

\section{Introduction}\label{sec1}
Nonlinear constrained optimization problems are ubiquitous in many important fields such as economics, management, and engineering \cite{Nocedal1999}. In this paper, we focus on the inequality constrained optimization problems of the form 
\begin{subequations}\label{ori}
\begin{align}
\underset{x}{\text{minimize}} \quad &~ f(x) \\
\text{subject to} \quad &~ g(x)\leq0,
\end{align}
\end{subequations}
where $ f: \mathbf{R}^n \rightarrow \mathbf{R} $ and $ g: \mathbf{R}^n \rightarrow \mathbf{R}^m $ are smooth functions. 

Since the 1950s, various relatively effective methods have been proposed to solve constrained optimization problems. 
Interior-point methods are one of the most powerful algorithms for solving inequality constrained optimization problems \cite{Forsgren2002,Nemirovski2008,Nocedal1999,Ye2011}. For example, Lee et al. \cite{lee2014interior} proposed interior-point algorithms for $P_*(\kappa)$-linear complementarity problem based on a new class of kernel functions. Ill{\'e}s et al. \cite{illes2010polynomial} presented a polynomial path-following interior point algorithm for general linear complementarity problems. Lin et al. \cite{Lin2021} and Deng et al. \cite{Deng2024} presented alternating direction method of multipliers (ADMM) based interior-point methods for solving large-scale linear and conic optimization. Zhang et al. \cite{zhang2023iprqp} proposed a primal-dual interior-point relaxation algorithm for convex quadratic programming (IPRQP). He et al. \cite{LZS2024COAP} proposed a Newton-CG based barrier-augmented Lagrangian method for general nonconvex conic optimization, which can be extended to solve inequality constrained optimization. Vanderbei and Shanno \cite{Vanderbei1999} described an interior-point algorithm for nonconvex nonlinear programming which is a direct extension of interior-point methods for linear and quadratic programming. Combining the augmented Lagrangian and the interior-point technique, Liu et al. introduced a class of effective methods for nonlinear programs \cite{Liu2020,Liu2022,Liu2023}. Another important category of methods is the combination of trust region methods and interior point methods \cite{Byrd2000,Byrd1999,Coleman1996,Dennis1998,Tseng2002,Villalobos2005,Waltz2006}. In these methods, one has to handle the incompatibility of the intersection of linearized constraints with trust-region bounds.

The adaptive regularisation algorithm using cubics (ARC) was first proposed to solve unconstrained optimization problems by Cartis, Gould and Toint \cite{Cartis2011A} which has roots in earlier algorithmic proposals by Griewank \cite{Griewank1981} and Nesterov and Polyak \cite{Neaterov2006}. The idea of ARC is that in each iteration, as long as the Hessian of the objective function is locally Lipschitz continuous, it is replaced by an approximate cubic model, where the adaptive parameters in the cubic model are dynamically adjusted to ensure optimal performance during the iteration process. ARC has good convergence properties, worst-case function- and derivative-evaluation complexity, and promising numerical experiments  \cite{Cartis2011A2}. There have been many methods (\cite{Bellavia2021,Benson2018,Bergou2018,Bianconcini2016,Cartis2020,Dehghan2021,Dussault2018,Lu2012,Martinez2017,Park2020,Zhao2021}) on using ARC (or its variants) for unconstrained optimization. And some methods (\cite{Bergou2017,Gould2012,Hsia2017,Jia2022,Jiang2021,Lieder2020}) are designed to solve or use ARC subproblems more efficiently. For constrained optimization problems, many effective methods have been proposed  based on the ARC framework. Lubkoll et al. \cite{Lubkoll2017} proposed a composite-step method specifically designed for solving equality constrained optimization problems involving partial differential equations. Cartis et al. \cite{Cartis2019} discovered a general class of adaptive regularization methods that use first- or higher-order local Taylor models of the objective regularized by a(ny) power of the step. They also generalized the approach in \cite{Grapiglia2017} to optimization problems with convex constraints and inexact subproblem solutions. Agarwal \cite{Agarwal2021} extended the ARC method to optimization problems defined on Riemannian manifolds.
Benson \cite{Benson2014} demonstrated that the Levenberg-Marquardt (LM) perturbation is equivalent to replacing the Newton step with a cubic regularisation step, where the regularisation parameter is chosen appropriately. Pei et al. \cite{pei2023} presented a sequential ARC algorithm for solving equality constrained optimization problem.

In this paper, we aim to propose an implementable ARC algorithm Based on Interior-Point methods (ARCBIP) for solving nonlinear inequality constrained optimization problems. Following the strategy of interior-point methods, we first focus on solving barrier problems accompanying with the original problem \eqref{ori}. 
In each iteration, by analogy with sequential quadratic programming (SQP) methods, we construct an ARC subproblem with linearized constraints and the well-known fraction to the boundary rule \cite{Nocedal1999} that prevents slack variables from approaching their lower bounds of zero too early. 
%Different from subproblems in trust-region SQP methods for solving  constrained optimization, ARC subproblems in ARCBIP have no incompatibility of the intersection of linearized constraints with trust-region bounds. 
Instead of solving ARC subproblem with linearized constraints and the fraction to the boundary rule directly, we employ composite-step approach \cite{Conn2000,Lubkoll2017OMS} and reduced Hessian methods to deal with linearized constraints, where the trial step is decomposed into a normal step and a tangential step. The normal step aims to reduce constraint violations, while the tangential step ensures sufficient model reduction. They are computed by solving two standard ARC subproblems (approximately) with the fraction to the boundary rule that can be handled by the backtrack strategy \cite{Nocedal1999}. And we present conditions for normal steps and tangential steps to be satisfied to ensure the convergence. After the trial step has been obtained, the exact penalty function is used as the merit function in ARC framework, and the ratio of reduction of the penalty function to reduction of the model is calculated to determine whether the trial point is accepted according to ARC mechanism.
 The updating of the barrier parameter is implemented by adaptive strategies \cite{Nocedal1999}, which is based on the deviation of the smallest complementary condition.
 Under mild assumptions, we establish the global convergence of our algorithm. Preliminary numerical experiments and some comparison results are reported.
 
 Although we draw inspiration from the SQP method when solving the barrier problem, there are substantial differences between the entire algorithm ARCBIP we proposed and the SQP method framework for solving inequality-constrained optimization problems.  From our point of view, the key differences between the ARCBIP and SQP can be summarized in three aspects. First, structurally, they differ in their framework: SQP features a single-loop structure, whereas ARCBIP adopts a double-loop structure. Second, their primary computational costs differ: for SQP, the main computational effort lies in solving quadratic programming (QP) subproblems with linear inequality constraints, which are typically addressed using active-set methods or interior-point methods \cite{Nocedal1999}. In contrast, ARCBIP expends most of its computational resources on solving the barrier problem within the inner loop. Since the barrier problem is a nonlinear optimization problem with equality constraints, the algorithm is constructed by analogy with the SQP idea, where the subproblem uses an ARC model instead of a QP model. Third, their convergence analyses and proofs are distinct, which stems from the differences in the ideas and logics underlying the two algorithms.

The remainder of this paper is organized as follows. In Section 2, we describe the development of ARCBIP. In Section 3, global convergence properties of the first-order critical points is presented. Preliminary numerical results are reported in Section 4, and the conclusion is provided in Section 5.

Throughout the paper, $\|\cdot\|$ denotes the Euclidean norm. The inner product of vectors $a,b\in\mathbb{R}^n$ is denoted by $a^T b$. The symbol $y^{(i)}$ represents the $i$-th component of vector $y$. The symbol $ A^{(i)} $ denotes the $i$-th column of $ A $. For a scalar $ \alpha $, we define $ \alpha^+ := \max \{0,\alpha\} $, while for a vector $u$, $ u^+ $ is defined component-wise by $ (u^+)^{(i)} = (u^{(i)})^+ $.

\section{ARCBIP for Nonlinear Inequality Constrained Optimization}\label{sec2}

In this section, we describe the development of ARCBIP in detail. %barrier problem, 2.1 2.2. 2.3
Based on the concept of the interior-point method, we relate problem \eqref{ori} to the following barrier problem with $x$ and $y$ as variables 
\begin{subequations}\label{barr}
\begin{align}
\underset{x,y}{\text{minimize}} \quad &~ f(x)-\mu \sum^m_{i=1}\ln y^{(i)} \\
\text{subject to} \quad &~ g(x)+y=0,
\end{align}
\end{subequations}
where the barrier parameter $\mu $ is strictly positive and where the vector of slack variables $y=(y^{(1)},\cdots, y^{(m)})^T $ is implicitly assumed to be positive. By letting $\mu $ converge to zero, the sequence of solutions to \eqref{barr} should normally converge to a stationary point of the original nonlinear program \eqref{ori}.
%and $\mu$ held fixed for a series of iterations until the KKT conditions are satisfied to some accuracy,

The Lagrangian of problem \eqref{barr} is
\begin{equation}\label{lag}
L(x,y,\lambda) :=f(x)-\mu \sum^m_{i=1}\ln y^{(i)}+\lambda^T (g(x)+y),
\end{equation}
where $\lambda \in \mathbf{R}^m $ is the Lagrange multiplier. At an optimal solution $
\begin{pmatrix}
  x \\
  y 
\end{pmatrix}$ of problem \eqref{barr}, from \eqref{lag} we have that
\begin{subequations}\label{Lagrad}
\begin{align}
\nabla_x L(x,y,\lambda)&=\nabla f(x)+A(x)\lambda=0, \\
\nabla_y L(x,y,\lambda)&=-\mu Y^{-1}e+\lambda=0,
\end{align}
\end{subequations}
where
\begin{equation}\label{Ax}
A(x)= \left( \nabla g^{(1)} (x),\cdots, \nabla g^{(m)} (x) \right) \nonumber
\end{equation}
represents the matrix of constraint gradients, and
\begin{equation}
Y=
\begin{pmatrix}
  y^{(1)} &   &   \\
    & \ddots &   \\
    &   &  y^{(m)} 
\end{pmatrix},
e=
\begin{pmatrix}
  1 \\
  \vdots \\
  1 
\end{pmatrix}. \nonumber
\end{equation}
We define the error function 
\begin{equation}\label{E}
  E(x,y; \mu) := \max \left\{ \| \nabla f(x) + A(x) \lambda \|, \| Y \lambda - \mu e \|, \| g(x)+y \| \right\},  \nonumber
\end{equation}
which is based on the perturbed KKT system of barrier problem \eqref{barr}.
We find a solution 
$\begin{pmatrix}
  x \\
  y 
\end{pmatrix}$ 
that satisfies
%We will be content with an approximate solution 
$E(x,y; \mu)<a \mu$, where $a$ is a constant. 

% In order to facilitate the derivation of the new algorithm, 
We define
\begin{equation}\label{zvarphi}
\varphi(z):=f(x)-\mu \sum^m_{i=1}\ln y^{(i)}, c(z):=g(x)+y
\end{equation}
with 
\begin{equation}
z:=
\begin{pmatrix}
  x \\
  y 
\end{pmatrix}. \nonumber
\end{equation}
The barrier problem (\ref{barr}) can be rewritten as 
\begin{subequations}\label{rebarr}
\begin{align}
\underset{z}{\text{minimize}} \quad &~ \varphi (z) \\
\text{subject to} \quad &~  c(z)=0.
\end{align}
\end{subequations}
%Applying the sequential quadratic programming (SQP) methods to this problem. At an iterate $z$, we generate a displacement
%by solving the quadratic program
% From now on we simplify the notation by denoting a vector such as $z$, which has $x$ and $y$-components, replacing $z = (x^T,y^T)^T$ with $z = (x,y)$.

%??? The section is organized as follows. In Section 2.1, we apply the idea of interior-point method to propose and improve the subproblem. Additionally, we utilize the composite-step to approximatively address the subproblem. In Section 2.2, we provide some rules and introduce an ARCBIP algorithm.
The section is organized as follows. In Subsection 2.1, we focus on solving barrier problem. In Subsection 2.2, we supplement the algorithm details and provide a complete description of ARCBIP.
% introduce the ARCBIP algorithm.

\subsection{Solving Barrier Problems}\label{Solving barrier problems}
At current iterate 
$z_k= \begin{pmatrix}
  x_k \\
  y_k 
\end{pmatrix}$, 
we need to generate a trial step
%\begin{equation}
$d_k=
\begin{pmatrix}
  d_x \\
  d_y
\end{pmatrix} $
%\nonumber
%\end{equation}
with the same blocks as $z_k$. To this end, analogous to SQP methods, we construct the following ARC subproblem with linearized constraints
\begin{subequations}\label{rebarrsqp}
\begin{align}
\underset{d}{\text{minimize}} \quad &~ \nabla \varphi( z_k )^T d +\frac{1}{2}d^T W_k d +\frac{1}{3}\sigma_k \|d \|^3\\
\text{subject to} \quad &~ \hat{A}(z_k)^T d + c(z_k)=0,
\end{align} 
\end{subequations}
where $ W_k $ represents the Hessian of the Lagrangian with respect to $ z_k $ for the barrier problem \eqref{rebarr}, adaptive parameter $\sigma_k >0$ is updated at every iteration and 
%$D$ and $\hat{A}(z)^T$ is given by
\begin{align}\label{Ahat}
\hat{A}(z_k)^T&:=
\begin{pmatrix}
  A(x_k)^T & I 
\end{pmatrix}
\end{align}
is the Jacobian of $c(z)$ at $z_k$.

%Being advised that problem \eqref{rebarr} is a mere restatement of problem \eqref{barr}, and thus 
From \eqref{Lagrad} we derive that
\begin{equation}\label{W}
W_k = \nabla^2_{zz} L(x_k,y_k,\lambda_k)=
\begin{pmatrix}
  \nabla^2_{xx} L(x_k,y_k,\lambda_k) & 0 \\
  0 & \mu Y_k^{-2} 
\end{pmatrix}.
\end{equation}

%As we know, there exists a perennial problem that arises from the incompatibility of the intersection of linearized constraints with trust-region bounds in traditional trust-region methods for solving constrained optimization. Cubic regularisation in problem \eqref{rebarrsqp} can avoid this difficulty.

%The cubic term of objective function of (\ref{rebarrsqp}) should guarantee that this subproblem has a finite solution even when $W_k$ is not positive definite.  

Considering that the slack variable should not approach zero prematurely, we introduce a scaling matrix $Y_k^{-1}$ that penalizes step $d_y$ near the boundary of the feasible region, that is, the cubic term of objective function of this subproblem can be written as
 %\begin{equation}\label{cubter}
 $\frac{1}{3}\sigma_k \left\|
  \begin{pmatrix}
    d_x \\
    Y_k^{-1} d_y
  \end{pmatrix}
  \right\|^3 = \frac{1}{3} \sigma_k \|D_k d \|^3,$% \nonumber
 % \end{equation}
  where 
\begin{equation}\label{D}
D_k :=
\begin{pmatrix}
  I & 0 \\
  0 & Y_k^{-1}
\end{pmatrix} 
\end{equation}
denotes coefficient matrix of $d$.

 The adaptive parameter in problem \eqref{rebarrsqp} does not prevent the new slack variable values $y_k + d_y$ from becoming their lower bounds of zero too quickly. 
 Hence, we impose the well-known fraction to the boundary rule
\begin{equation}
y_k + d_y \geq (1-\tau) y_k,\nonumber
\end{equation}
where the parameter $\tau \in(0, 1)$ is selected to be close to 1. This results in the subproblem
\begin{subequations}\label{cubicsubp}
\begin{align}
\underset{d}{\text{minimize}} \quad &~ \nabla \varphi(z_k)^T d + \cfrac{1}{2}d^T W_k d  + \cfrac{1}{3}\sigma_k \| D_k d \|^3\\
\text{subject to} \quad  &~ \hat{A}(z_k)^T d + c(z_k)=0,\\
  &~ d_y\geq-\tau y_k.
\end{align} 
\end{subequations}
%It is true that problem (\ref{cubicsubp}) could be quite difficult to solve exactly, but we intend to only compute approximate solutions using techniques such as **** algorithm. 

%In the following, we will elaborate on the algorithm for addressing the barrier problem (\ref{rebarr}), that had only a vague description above.
We do not address subproblem \eqref{cubicsubp} directly. We employ the composite-step method to decompose the trial step $d_k$ into a normal step $n_k$ and a tangential step $t_k$, i.e.,
\begin{equation}\label{dcomposite}
d_k =   n_k + t_k, 
  %=
  %\begin{pmatrix}
  %  n_x + t_x \\
   % n_y + t_y
  %\end{pmatrix}.
\end{equation}
where 
\begin{equation}\label{ntcompo}
n_k = 
\begin{pmatrix}
    n_x \\
    n_y
\end{pmatrix},
t_k = 
\begin{pmatrix}
    t_x \\
    t_y
\end{pmatrix} 
\end{equation}
both have the same blocks as $z_k$.
%Taking a normal step $n$ attempts to satisfy the constraints in problem \eqref{cubicsubp} as much as possible, and 
The main role of the normal step is to improve the feasibility of iteration $z_{k+1}$ and the tangential step is used to provide sufficient descent. 

%To calculate the normal step $n_k$, we choose a contraction parameter $0 <\xi\leq 1$ to adjust the weight of cubic regularization item in subproblem \eqref{cubicsubp}.  % i.e., a bigger regularization parameter $\frac{\sigma_k}{3\xi^3}$ and tighter lower bounds $-\xi \tau$. 
To produce a sufficient decrease, a contraction parameter $0 <\xi\leq 1$ is introduced to restrict the norm of $n_k$ by adjusting the weight of cubic regularization item in subproblem \eqref{cubicsubp}. Then, we (approximately) solve the following problem
\begin{subequations}\label{norm1}
  \begin{align}
\underset{n}{\text{minimize}} \quad &~ \frac{1}{2} \|\hat{A}(z_k)^T n + c(z_k)\|^2 +\frac{\sigma_k}{3\xi^3}
\|D_k n\|^3 \\
\text{subject to} \quad &~ n_y\geq-\xi\tau y_k
  \end{align}
\end{subequations}
to get a normal step %$ n_k = ( n_x , n_y ) $.
$n_k= \begin{pmatrix}
  n_x \\
  n_y 
\end{pmatrix}$.

Then the tangential step $t_k$ is computed by %approximately 
solving the following modification of problem \eqref{cubicsubp}.
\begin{subequations}\label{tang1}
\begin{align}
\underset{t}{\text{minimize}} \quad &~ \nabla \varphi(z_k)^T (n_k+t)+ \cfrac{1}{2}(n_k+t)^T W_k (n_k+t) + \cfrac{1}{3}\sigma_k \| D_k t \|^3 \\
\text{subject to} \quad &~ \hat{A}(z_k)^T (n_k+t)  = \hat{A}(z_k)^T n_k ,\\
  &~ d_y\geq-\tau y_k.
\end{align}
\end{subequations}
We shall discuss how to obtain $n_k$ and $t_k$ in detail in the following part.

\subsubsection{Computation of Normal Step}\label{211}
%\subsubsection{\bf Computation of normal problem}
\noindent We first rewrite the normal problem \eqref{norm1} by \eqref{zvarphi}, \eqref{Ahat} and \eqref{ntcompo} as follows. 
\begin{subequations}\label{norm3}
  \begin{align}
\underset{n}{\text{minimize}} \quad &~ \frac{1}{2} \|g_k+ y_k + A_k^T n_x + n_y\|^2 +\frac{1}{3} \widetilde{\sigma}_k \left\|
\begin{pmatrix}
  n_x \\
  Y_k^{-1} n_y
\end{pmatrix}
%\left( d_x , Y_k^{-1} d_y \right)
\right\|^3\\
\text{subject to} \quad &~ n_y\geq-\xi\tau y_k,
  \end{align}
\end{subequations}
where $ A_k := A(x_k) $, $g_k := g(x_k) $ and
\begin{equation}\label{sigmahat}
\widetilde{\sigma}_k:=\frac{\sigma_k}{\xi^3}.
\end{equation}

Then, we propose two conditions the (approximate) solution $n_k$ of problem \eqref{norm3} must satisfy. 
% except that the constraint of (\ref{norm3}). 

Introduce variables 
\begin{equation}\label{u}
  u_x:=n_x,\quad u_y:=Y_k^{-1} n_y,
\end{equation}
and omit the constant term so that problem \eqref{norm3} becomes
\begin{subequations}\label{normu}
  \begin{align}
\underset{u}{\text{minimize}} \quad &~
  \left( g_k+ y_k \right)^T  \left( A_k^T \quad Y_k \right) u +  \frac{1}{2} u^T 
  \begin{pmatrix}
    A_k A_k^T & A_k Y_k \\
    Y_k A_k^T & Y_k^2 
  \end{pmatrix}
  u +\frac{1}{3} \widetilde{\sigma}_k \left\| u \right\|^3 \label{normua}\\
\text{subject to} \quad &~ u_y\geq- \xi \tau e, \label{normub}
  \end{align}
\end{subequations}
 where $ u := 
\begin{pmatrix}
  u_x \\
  u_y
\end{pmatrix}
 $ with the same blocks as $z_k$.
 
If the lower bound constraint \eqref{normub} is neglected,  the solution $u_\ast$ is to lie in the range space of 
\begin{equation}\label{range}
\binom{A_k}{Y_k}.
\end{equation}
%Although the objective function of problem \eqref{normu} is generally not convex, it has optimality conditions.
%its optimal solution always obtained by some global minimizer. 
In fact, according to Theorem 3.1 in \cite{Cartis2011A}, the necessary and sufficient conditions for $u_\ast$ to be a globally optimal solution of problem \eqref{normua} are 
\begin{equation}\label{optcon1}
\left[
\begin{pmatrix}
  A_k A_k^T & A_k Y_k \\
  Y_k A_k^T & Y_k^2 
\end{pmatrix}
+ \tilde{\sigma}_k \|u_\ast\|I \right]u_\ast= -
\begin{pmatrix}
  A_k  \\
  Y_k
\end{pmatrix}
(g_k+y_k) ,
%\begin{pmatrix}
%  A(x)^T & Y_k 
%\end{pmatrix}^T
%\begin{pmatrix}
%  A(x)^T & Y_k 
%\end{pmatrix}
\end{equation}
and 
%\begin{equation}\label{optcon2}
$\begin{pmatrix}
  A_k A_k^T & A_k Y_k \\
  Y_k A_k^T & Y_k^2 
\end{pmatrix}
+ \tilde{\sigma}_k \|u_\ast\|I  \succeq 0$ %\nonumber
%\end{equation}
for some $ \tilde{\sigma}_k \|u_\ast\| \geq 0 $. 

If $ \tilde{\sigma}_k \|u_\ast\| = 0 $, then 
%\begin{equation}
 $u_\ast = -
 \begin{pmatrix}
  A_k \\
  Y_k
 \end{pmatrix}
 \left[
  \begin{pmatrix}
  A_k^T & Y_k
 \end{pmatrix}
  \begin{pmatrix}
  A_k \\
  Y_k
 \end{pmatrix}
 \right]^{-1} (g_k+y_k)$
 % \nonumber
%\end{equation}
%\begin{equation}
% u_\ast = -
% \begin{pmatrix}
%  A_k  \\
%  Y_k
%\end{pmatrix}^+ (g_k+y_k) \nonumber
%\end{equation}
is a solution, %where $[\; \cdot \;]^+$ represent pseudo-inverse 
and it is obviously within the range space of (\ref{range}). 
%On the other hand, if $ \tilde{\sigma}_k \|u_\ast\| > 0 $, then premultiply both sides of (\ref{optcon1}) by $ (D_k N_k)^T $ to obtain $ (D_k N_k)^T u_\ast= 0$, showing that $u_\ast$ is in the range of (\ref{range}), %where $ D_k N_k $ form an orthogonal basis of the null space of $\begin{pmatrix}
%%  A_k^T & Y_k
%%\end{pmatrix}$. 
%where $ D_k N_k $ represents the null space basis matrix of the equality constraints \eqref{eqcon} such that $\begin{pmatrix}   
%A_k^T & Y_k
%\end{pmatrix} D_k N_k = 0 $.
%%in the problem (\ref{tanghatt}),
%That is to say, the columns of $ N_k $ form an orthogonal basis of the null space of $
%\begin{pmatrix}   
%A_k^T & I_k
%\end{pmatrix}$ such that $\begin{pmatrix}   
%A_k^T & I_k
%\end{pmatrix} N_k = 0 $.

Let $ N_k $ be a matrix whose columns form an orthogonal basis of the null space of $
\begin{pmatrix}   
A_k^T & I_k
\end{pmatrix}$. 
Then $\begin{pmatrix}   
A_k^T & I_k
\end{pmatrix} N_k = 0 $ 
and hence
\begin{equation}\label{nullN}
\begin{pmatrix}   
A_k^T & Y_k
\end{pmatrix} D_k N_k = 0. 
\end{equation}

 If $ \tilde{\sigma}_k \|u_\ast\| > 0 $, then premultiply both sides of (\ref{optcon1}) by $ (D_k N_k)^T $ to obtain $ (D_k N_k)^T u_\ast= 0$, showing that $u_\ast$ is in the range of (\ref{range}).
%In summary, the solution of problem \eqref{normu} is in the range of (\ref{range}). 

Hence, we require that the (approximate) solution should be in the range of (\ref{range}) even with the presence of a lower bound constraint in problem \eqref{norm3}, which can limit the magnitude of normal step $n_k$ and hence the tangential step can have a better chance to produce sufficient descent. 
% and it is obviously within the range space of (\ref{range}). On the other hand, if $ \tilde{\sigma}_k \|u_\ast\| > 0 $ then premultiply both sides of (\ref{optcon1}) by $\hat{N}_k^T $ to obtain $ \hat{N}_k^T u_\ast= 0$, showing that $u_\ast$ is in the range of (\ref{range}), where $ \hat{N}_k $ form an orthogonal basis of the null space of $( A_k^T \quad Y_k )$. In summary, the solution $u$ of (\ref{normu}) is in the range of (\ref{range}). Even with the presence of a lower bound, the value of $u$ within the range of (\ref{range}) is restricted to prevent unreasonable long. 
%The condition of type is significant, as it allows us to constrain the increase in the objective function and due to the normal step. 
In the implementation, if the (approximate) solution of problem \eqref{normu} violates the bounds \eqref{normub}, we can backtrack so that these bounds are satisfied.

For the analysis of this paper, it is sufficient to impose the following milder condition.
%\\[0.5pt] 
%\vskip 1mm\noindent{\textbf{Range Space Condition.}} 

(a) Range Space Condition.

 The (approximate) solution $n_k$ of the normal problem \eqref{norm3} must follow the given form
\begin{equation}\label{ranspacon}
 n_k =
 \begin{pmatrix}
  A_k \\
  Y_k^2
 \end{pmatrix}\omega_k
\end{equation}
for some vector $\omega_k \in \mathbf{R}^m $, whenever problem \eqref{norm3} possesses an optimal solution of that form.
\\[0.5pt] 

The other condition on the normal step is related with sufficient reduction in the objective of problem \eqref{norm3}, which is a classical condition. We require that the reduction be comparable to the reduction obtained by minimizing along the steepest descent direction $u_k^c$ in $u$. 
\begin{equation}\label{ukc}
 u_k^c := -
 \begin{pmatrix}
  A_k \\
  Y_k
 \end{pmatrix}(g_k+y_k)
\end{equation}
is the gradient of the objective function at $u = 0$ as stated in problem \eqref{normu}.
 
By substituting the original variables, we obtain the vector
\begin{equation}
 n_k^c := -
 \begin{pmatrix}
  A_k \\
  Y_k^2
 \end{pmatrix}(g_k+y_k).\nonumber
\end{equation}
We consider the reduction in the objective of problem \eqref{norm3} by a step $n_k = 
\begin{pmatrix}
  n_x \\
  n_y
\end{pmatrix}$ as the \emph{normal predicted reduction}:
\begin{equation}\label{npred}
 \text{npred}_k(n_k):= \left\| g_k+y_k \right\| - \left\| g_k+y_k+A_k^T n_x +n_y \right\|  -\frac{1}{3} \widetilde{\sigma}_k \left\|
\begin{pmatrix}
  n_x \\
  Y_k^{-1} n_y
\end{pmatrix}
\right\|^3
\end{equation}
and we require such a reduction to meet the following condition.
\\[0.5pt] 
%\vskip 1mm\noindent{\textbf{Normal Cauchy Decrease Condition.}} 

(b) Normal Cauchy Decrease Condition. 

The (approximate) solution $n_k$ of the normal problem \eqref{norm3} must satisfy
\begin{equation}\label{NCauchycon}
  \text{npred}_k(n_k) \geq \gamma_n \text{npred}_k(\alpha_k^c n_k^c)
\end{equation}
for some constant $\gamma_n > 0$, 
where $\alpha_k^c$ solves the problem
\begin{subequations}\label{NCauchypoi}
\begin{align}
\underset{\alpha \geq 0}{\text{minimize}} \quad &~ \cfrac{1}{2} \| g_k+ y_k + \alpha (A_k^T n_x + n_y)\|^2 +\frac{1}{3} \widetilde{\sigma}_k \left\|\alpha
\begin{pmatrix}
  n_x \\
  Y_k^{-1} n_y
\end{pmatrix}
%\left( d_x , Y_k^{-1} d_y \right)
\right\|^3\\
\text{subject to} \quad &~ \alpha n_y^c \geq -\xi \tau y_k ,
\end{align}
\end{subequations}
where $n_y^c$ is one block of $n_k^c := \begin{pmatrix}
                 n_x^c \\
                 n_y^c 
               \end{pmatrix}$ with the same blocks as $z_k$.
%Note that, under the condition of $\gamma_n \leq 1$, the optimal solution obtained through (\ref{norm3}) satisfies the normal Cauchy decreasing condition and the range space condition. 
\\[0.5pt] 

Note that, since $ \alpha = 0 $ is a feasible solution to problem \eqref{NCauchypoi}, it is evident from \eqref{NCauchycon} that
\begin{equation}\label{npred0}
 \text{npred}_k (n_k) \geq 0.
\end{equation}

\subsubsection{Computation of Tangential Step}
%We solve the tangential problem \eqref{tang3}. 
\noindent At an iterate $
\begin{pmatrix}
  x_k \\
  y_k
\end{pmatrix}$, from the definitions \eqref{zvarphi}, \eqref{Ahat}-\eqref{D}, the tangential problem \eqref{tang1} can be expressed as
\begin{subequations}\label{tang3}
\begin{align}
\underset{d}{\text{minimize}} \quad &~\nabla f_k^T d_x - \mu e^T Y_k^{-1} d_y +\frac{1}{2}d_x^T B_k d_x +\frac{1}{2} \mu d_y^T Y_k^{-2} d_y \\ \nonumber
&~ +\frac{1}{3}\sigma_k \left\|
\begin{pmatrix}
  d_x - n_x \\
  Y_k^{-1} (d_y - n_y)
\end{pmatrix}
%\left( d_x , Y_k^{-1} d_y \right)
\right\|^3\\
\text{subject to} \quad &~A_k^T d_x +d_y =A_k^T n_x +n_y,\\
  &~ d_y\geq-\tau y_k,
\end{align}
\end{subequations}
where $B_k$ denotes $\nabla^2_{xx} L(x,y,\lambda)$ or its approximation, $\nabla f_k := \nabla f(x_k)$ and $n_k$ is the (approximate) solution to problem \eqref{norm3}.

%From the composite-step, 
%We find an approximate solution $n_k$ of normal problem above.
%In iteration $k$, 
Set $ d = n_k + t $ with $t = 
\begin{pmatrix}
  t_x \\
  t_y
\end{pmatrix}$, where $t$ is a tangential step to obtain sufficient reduction of the objective function's model. 
%Consequently, we replace all $d$ with decomposition in subproblem \eqref{tang3}, and omit the constant term including $n_k$ in the objective function. 
Then, problem \eqref{tang3} can be written as follows. %(i.e. $d = (n_x+t_x,n_y+t_y)$) 
\begin{subequations}\label{tang4}
\begin{align}
\underset{t}{\text{minimize}} \quad &~\left(\nabla f_k + B_k n_x\right)^T t_x  +\frac{1}{2} t_x^T B_k t_x \nonumber \\
&~- \mu \left(e^T Y^{-1}_k t_y - n_y^T Y_k^{-2} t_y -\frac{1}{2} t_y^T Y^{-2}_k t_y \right)   +\frac{1}{3} \sigma_k \left\|
\begin{pmatrix}
  t_x \\
  Y_k^{-1} t_y
\end{pmatrix}
%\left( d_x , Y_k^{-1} d_y \right)
\right\|^3 \nonumber \\ 
\text{subject to} \quad &~A_k^T t_x +t_y =0,\\
  &~ Y^{-1}_k (n_y+t_y)\geq-\tau e .
\end{align}
\end{subequations}

We now discuss the decrease condition of the tangential step that must be followed for the (approximate) solution of problem \eqref{tang4}. 
%Introducing variable substitution like normal step computation, 
We define
\begin{equation}\label{hatt}
  \hat{t} := 
  \begin{pmatrix}
    t_x \\
    \hat{t}_y 
  \end{pmatrix}
  =
  \begin{pmatrix}
    t_x \\
    Y_k^{-1} t_y 
  \end{pmatrix} = D_k t,
\end{equation}
%Using this and defining 
\begin{equation}\label{defBkN1}
  \nabla f_k^N :=
  \begin{pmatrix}
    \nabla f_k + B_k n_x \\
    -\mu ( e - Y_k^{-1} n_y )  
  \end{pmatrix}^T, 
\end{equation}
and
\begin{equation}\label{defBkN2}
  B_k^N :=
  \begin{pmatrix}
    B_k & 0 \\
    0 & \mu I 
  \end{pmatrix}.
\end{equation}
From \eqref{hatt}, \eqref{defBkN1} and \eqref{defBkN2}, problem \eqref{tang4} can be expressed as 
\begin{subequations}\label{tanghatt}
\begin{align}
\underset{t}{\text{minimize}} \quad &~
  \nabla f_k^N \hat{t} + \frac{1}{2} \hat{t}^T   B_k^N \hat{t} +\frac{1}{3} \sigma_k \left\| \hat{t} \right\|^3\\
\text{subject to} \quad &~A_k^T t_x + Y_k \hat{t}_y =0,  \label{eqcon} \\
  &~ \hat{t}_y \geq-\tau e - Y^{-1}_k n_y .
\end{align}
\end{subequations}

%This problem is equivalent to (\ref{tang3}) if the normal step satisfies the range space condition (\ref{ranspacon}). In our analysis, we will always make the assumption 
%\begin{equation}\label{hatsigmabou}
%  \sigma_k \leq \sigma_k \leq \frac{\sigma_k}{\xi^3},
%\end{equation}
%which is satisfied by (\ref{tang3}).

%Next, we let $N_k= \left( N_x^T \enspace N_y^T \right)^T$ represent the null space basis matrix of the equality constraints in the problem (\ref{tanghatt}), i.e. the columns of $ N_k $ form an orthogonal basis of the null space of $\hat{A}_k^T$ (so that $\hat{A}_k^T N_k = 0 $). 
From the equality constraints \eqref{eqcon}, $ \hat{t} $ lies in the null space of 
$\begin{pmatrix}
  A_k^T & Y_k
\end{pmatrix}$.
%i.e. $ t $ is lie in the null space of $ \hat{A}^T $.
Thus, by defining $\hat{N}_k := D_k N_k$, we can rewrite
\begin{equation}\label{tN}
  \hat{t} = \hat{N}_k p = 
  \begin{pmatrix}
    N_x \\
    \hat{N}_y
  \end{pmatrix} p =
  \begin{pmatrix}
    N_x \\
    Y_k^{-1} N_y 
  \end{pmatrix} 
  p ,
\end{equation}
%where $p \in \mathbf{R}^n$ and where $ \hat{N}_k $ represent the null space basis matrix of the equality constraints in the problem (\ref{tanghatt}), i.e., the columns of $ \hat{N}_k $ form an orthogonal basis of the null space of $( A_k^T \quad Y_k )$ (s.t. $( A_k^T \quad Y_k ) \hat{N}_k = 0 $).
%There are many choices for $ N = ( N_x^T , N_y^T )^T $, and each choice in different contexts have various advantages. In this paper we will allow $ N_k  $ to be any null space basis matrix satisfying assumption (A3). Thus, the tangential subproblem (\ref{tanghatt}) becomes can be expressed as
where $p \in \mathbf{R}^n$, and 
$N_k =
\begin{pmatrix}
    N_x \\
    N_y 
  \end{pmatrix}$ with $N_x \in \mathbf{R}^{n \times n}$ and $N_y \in \mathbf{R}^{m \times n}$.
%There are many choices for $ N_k $ in different contexts.
%=
%\begin{pmatrix}
%  N_x \\
%  N_y
%\end{pmatrix}$, 
% and each choice in different contexts have various advantages. 
%In this paper we will allow $ N_k  $ to be any null space basis matrix satisfying assumption (A3). 

From \eqref{nullN}, $ D_k N_k $ is a null space basis matrix of 
$\begin{pmatrix}   
A_k^T & Y_k
\end{pmatrix}$ and then
$\begin{pmatrix}   
A_k^T & Y_k
\end{pmatrix} \hat{N}_k = 0$.
Thus, the tangential subproblem (\ref{tanghatt}) is equivalent to the following problem. 
\begin{subequations}\label{tang5}
\begin{align}
\underset{p}{\text{minimize}} \quad &~ 
\nabla f_k^N \hat{N}_k p + \cfrac{1}{2} p^T \hat{N}_k^T B_k^N \hat{N}_k p + \frac{1}{3} \sigma_k \left\| p \right\|^3  \\
\text{subject to} \quad &~  \hat{N}_y p \geq-\tau e -Y^{-1}_k n_y .
\end{align}
\end{subequations}
This has the form of an ARC subproblem for unconstrained optimization with bounds away from zero (in the scaled variables). 

It is necessary for the step $\hat{t}_k$ to produce the comparable reduction in the objective of problem \eqref{tang5} as a steepest descent step. The steepest descent direction for the objective function of problem \eqref{tang5} at $p = 0$ is identified by
\begin{equation}\label{pkc}
  p_k^c %= - N_k^T D_k (\nabla f_k^N)^T
  = -N_x^T \left(\nabla f_k + B_k n_x\right) + \mu N_y^T \left( Y^{-1}_k e - Y_k^{-2} n_y \right). 
\end{equation}

We define the \emph{tangential predicted reduction} 
%generated by step $t$ 
as the change in the objective function of problem \eqref{tang4} from $0$ to $t_k$,
\begin{eqnarray}\label{tpred}
 &\text{tpred}_k(t_k) := &
  - \left( \nabla f_k + B_k n_x\right)^T t_x  -\frac{1}{2} t_x^T B_k t_x \nonumber\\
&&+ \mu \left(e^T Y^{-1}_k t_y - n_y^T Y_k^{-2} t_y -\frac{1}{2} t_y^T Y^{-2}_k t_y \right)   - \frac{1}{3} \sigma_k \left\|
\begin{pmatrix}
  t_x \\
  Y_k^{-1} t_y
\end{pmatrix}
%\left( d_x , Y_k^{-1} d_y \right)
\right\|^3
%  - \nabla f_k^N \hat{t} - \frac{1}{2} \hat{t}^T   B_k^N \hat{t} - \frac{1}{3} \sigma_k \left\| \hat{t} \right\|^3.
\end{eqnarray}
and we require such a reduction to satisfy the following condition.
%Now, we present a decrease condition that an approximate solution $t_k$ of problem \eqref{tang5} must satisfy.
\\[0.5pt] 
%\vskip 1mm\noindent{\textbf{Tangential Cauchy Decrease Condition.}} 

(c) Tangential Cauchy Decrease Condition. 

The (approximate) solution $t_k$ of the tangential problem \eqref{tang4} must satisfy
\begin{equation}\label{TCauchycon}
  \text{tpred}_k(t_k)\geq  \gamma_t \text{tpred}_k(\beta_k^c N_k p_k^c)
\end{equation}
for some constant $\gamma_t > 0$, where $\beta_k^c$ solves the problem
\begin{subequations}\label{TCauchypoi}
\begin{align}
\underset{\beta \geq 0}{\text{minimize}} \quad &~ -\text{tpred}_k(\beta N_k p_k^c)  \\
\text{subject to} \quad &~ n_y +\beta N_k p_k^c \geq -\tau y_k .
\end{align}
\end{subequations}
%Here $N_k$ is a null space basis matrix satisfying assumption (A3).
%satisfies (\ref{tN}).
%and $\sigma_k$ is given by (\ref{hatsigmabou}).
%\\[0.5pt] 

%\noindent 
Obviously, the optimal solution of problem \eqref{tang5} satisfies the tangential Cauchy decrease condition. It should also be noted that 
\begin{equation}\label{tpredgeq0}
  \text{tpred}_k(t_k)\geq 0
\end{equation}
since $\beta = 0$ is a feasible solution to problem \eqref{TCauchypoi}.

\subsubsection{Merit Function}

The role of the merit function is to determine whether a step should be accepted. We use an exact merit function 
\begin{equation}\label{mer1}
  \phi(z_k;\nu) := \varphi(z_k)+\nu \|c(z_k)\| , \nonumber
\end{equation}
where $\nu > 0$ is penalty parameter. It can also be represented as
\begin{equation}\label{mer2}
  \phi(x_k, y_k; \nu)= f(x_k)- \mu \sum^m_{i=1}\ln y_k^{(i)} + \nu \|g(x_k)+y_k\|. \nonumber
\end{equation}
A model $m_k$ of $\phi(\cdot, \cdot; \nu)$ is constructed around an iterate $ 
\begin{pmatrix}
x_k\\
y_k
\end{pmatrix}$ with
\begin{equation}\label{mmodel}
  \begin{aligned}
    m_k(d_k) := &~ f_k+ \nabla f_k^T d_x + \frac{1}{2}d_x^T B_k d_x + \frac{1}{3}\sigma_k \left\|
\begin{pmatrix}
  d_x \\
  Y_k^{-1} d_y
\end{pmatrix}
%\left( d_x , Y_k^{-1} d_y \right)
\right\|^3 \\
    &~ - \mu \left( \sum^m_{i=1}\ln y_k^{(i)}+ e^T Y_k^{-1} d_y -\frac{1}{2} d_y^T Y_k^{-2} d_y  \right) \\
    &~ + \nu_k\| g_k+ y_k+ A_k^T d_x + d_y \|.
  \end{aligned}\nonumber
\end{equation}
We define the \emph{predicted reduction} in the merit function $\phi$ to represent the change in model $m_k$ generated by step $d_k$
\begin{equation}\label{pred}
 \begin{aligned}
  \text{pred}_k(d_k)&:=m_k(0)-m_k(d_k)\\
  &=- \nabla f_k^T d_x - \frac{1}{2}d_x^T B_k d_x - \frac{1}{3}\sigma_k \left\|
\begin{pmatrix}
  d_x \\
  Y_k^{-1} d_y
\end{pmatrix}
%\left( d_x , Y_k^{-1} d_y \right)
\right\|^3 \\
  &~ + \mu \left( e^T Y_k^{-1} d_y -\frac{1}{2} d_y^T Y_k^{-2} d_y  \right)\\
  &~ +\nu_k \left(\| g_k+ y_k \|- \| g_k+ y_k+ A_k^T d_x + d_y \| \right).
 \end{aligned}
\end{equation}

%We establish that $ m_k $ is an accurate local model of the merit function $ \phi $. 

We define the actual reduction of the merit function $ \phi $ from $
\begin{pmatrix}
  x_k \\
  y_k
\end{pmatrix}$ to $
\begin{pmatrix}
  x_k + d_x \\
  y_k + d_y
\end{pmatrix}$ as
\begin{equation}\label{ared}
  \text{ared}_k (d_k) := \phi(x_k, y_k; \nu_k)-\phi(x_k+d_x, y_k+d_y; \nu_k).
\end{equation}
%That is, $\rho_k$ can be written as
%\begin{equation}\label{rho2}
%\rho_k=\frac{\text{ared}_k (d_k)}{\text{pred}_k (d_k)}.\nonumber
%\end{equation}

The predicted reduction is used to determine whether the trial step is accepted and update the regularisation parameter. 
Now we define the descent ratio as 
\begin{equation}\label{rho}
\rho_k:=\frac{\phi(x_k, y_k; \nu_k)-\phi(x_k+d_x, y_k+d_y; \nu_k)}{m_k(0)-m_k(d_k)} =\frac{\text{ared}_k (d_k)}{\text{pred}_k (d_k)}.
\end{equation}
The descent ratio is related to the updating of the regularisation parameters, which is described in detail in our algorithm.
%The details of updating the regularization parameter are provided in Algorithm \ref{Alg2}.

\subsection{Description of the Algorithm}
\noindent Before providing a complete description of the algorithm, let us first describe the update rules for the penalty parameter of the merit function, Lagrange multiplier, and barrier parameter of barrier function.
% and for the step of (\ref{tang3}).

From $ d_x = n_x + t_x $ and $ d_y = n_y + t_y $, the total predicted reduction (\ref{pred}) can be expressed as follows. 
\begin{equation}\label{repred1}
  \begin{aligned}
  \text{pred}_k (d_k) = 
  &~ - \nabla f_k^T n_x - \frac{1}{2}n_x^T B_k n_x - \left( \nabla f_k +B_k n_x \right)^T t_x  \\
  &~ - \frac{1}{2}t_x^T B_k t_x - \frac{1}{3}\sigma_k \| D_k d_k \|^3 + \mu \left( e^T Y_k^{-1} n_y -\frac{1}{2} n_y^T Y_k^{-2} n_y  \right) \\
  &~ + \mu \left( e^T Y_k^{-1} t_y - n_y^T Y_k^{-2} t_y -\frac{1}{2} t_y^T Y_k^{-2} t_y  \right)\\
  &~ +\nu_k \left(\| g_k+ y_k \| - \| g_k+ y_k+ A_k^T n_x + n_y \|  \right).
  \end{aligned}\nonumber
\end{equation}
Then, using
% $ \tilde{\sigma}_k \geq \sigma_k $ obtained from (\ref{sigmahat}), 
the definitions (\ref{npred}) and (\ref{tpred}) of the normal and tangential predicted reductions, respectively, we obtain that
\begin{equation}\label{repred2}
  \begin{aligned}
    \text{pred}_k (d_k) &~ = \text{tpred}_k (t_k)   + \nu_k  \text{npred}_k (n_k) + \chi_k \\
   &~ + \cfrac{1}{3} \left( \sigma_k \|D_k t_k\|^3 + \nu_k \tilde{\sigma}_k \|D_k n_k\|^3 - \sigma_k \|D_k d_k\|^3 \right),
  \end{aligned}
\end{equation}
where 
\begin{equation}\label{chi}
\chi_k= - \nabla f_k^T n_x - \frac{1}{2}n_x^T B_k n_x + \mu \left( e^T Y_k^{-1} n_y -\frac{1}{2} n_y^T Y_k^{-2} n_y  \right).
\end{equation}
%and where the inequality in the formula is obtained using the fact that $ 2a(a-b) \geq a^2 -b^2 $.
In order to achieve a satisfactory decrease, we require that
\begin{equation}\label{penapara}
  \text{pred}_k(d_k) \geq \delta \nu_k \text{npred}_k (n_k)
\end{equation}
%We have noted in (\ref{npred0}) and (\ref{tpredgeq0})that $\text{npred}_k (n_k)$ and $\text{tpred}_k (t_k)$ are both nonnegative, but $\chi_k$, which gives the change in (\ref{tang3}) due to the normal step $n_k$, can be of any sign. From (\ref{repred2}) we see that if $\text{tpred}_k (t_k)> 0$, (\ref{penapara}) holds when
for some $ \delta \in (0,1) $ and $ \nu_k $ is sufficiently large. 

According to (\ref{repred2}), if $ \text{npred}_k (n_k) = 0 $, then by (\ref{npred}) we obtain $ n_k = 0 $ and nonnegative property of $ \text{tpred}_k (t_k) $, which clearly establishes (\ref{penapara}). 
Conversely, if $ \text{npred}_k (n_k) > 0 $, (\ref{penapara}) holds when
\begin{equation}
  \begin{aligned}
    \nu_k \geq
    -\frac{\text{tpred}_k (t_k) + \frac{1}{3} \sigma_k \left( \|D_k t_k\|^3 - \|D_k d_k\|^3 \right) + \chi_k }{( 1 - \delta ) \; \text{npred}_k (n_k) + \frac{1}{3} \tilde{\sigma}_k \|D_k n_k\|^3 }.
    %= -\frac{\text{tpred}_k (t_k) + \cfrac{1}{3} \sigma_k \|D_k t_k\|^3 - \cfrac{1}{3} \|D_k d_k\|^3 + \chi_k }{( \cfrac{1}{2\| g_k+ y_k \|} - \delta ) \; \text{npred}_k (n_k) + \cfrac{1}{6 \| g_k+ y_k \|} \sigma_k \|D_k n_k\|^3 }\\
     %= -\frac{ 2\| g_k+ y_k \| \left( \text{tpred}_k (t_k) + \cfrac{1}{3} \sigma_k \|D_k t_k\|^3 - \cfrac{1}{3} \|D_k d_k\|^3 + \chi_k \right)}{( 1 - 2\| g_k+ y_k\| \delta ) \; \text{npred}_k (n_k) + \cfrac{1}{3} \sigma_k \|D_k n_k\|^3 }
  \end{aligned}\nonumber
\end{equation}
We define
\begin{equation}\label{nu}
\begin{aligned}
  \tilde{\nu}_k := \max & \left\{ -\frac{\text{tpred}_k (t_k) + \frac{1}{3} \sigma_k \left( \|D_k t_k\|^3 - \|D_k d_k\|^3 \right) + \chi_k }{( 1 - \delta ) \; \text{npred}_k (n_k) + \frac{1}{3} \tilde{\sigma}_k \|D_k n_k\|^3 } \right. , \\ 
  &\left. \quad -\frac{ \frac{1}{3} \sigma_k \left( \|D_k t_k\|^3 - \|D_k d_k\|^3 \right) }{\frac{1}{2}\; \text{npred}_k (n_k) + \frac{1}{3} \tilde{\sigma}_k \|D_k n_k\|^3 } \right\}
  \end{aligned}
\end{equation}
as the smallest value to satisfy the \eqref{penapara}, and the purpose of the second term in the right hand in \eqref{nu} is to support the proof of Lemma \ref{nuknubar}. %Assume that $ \nu_k $ is the penalty parameter at the previous iteration. 
Then $ \nu_k $ is updated by
\begin{equation}\label{nugengxin}
  \nu_k =\left\{
  \begin{array}{rcl}
  &\nu_{k-1},                         & {\text{if} \; \tilde{\nu}_k \leq \nu_{k-1}},\\
  &\text{max}\{\tilde{\nu}_k, 1.5\nu_{k-1}\},& {\text{otherwise}.}
\end{array} 
\right.
\end{equation}

The new multiplier vector
\begin{equation}\label{lambda}
\lambda_{k+1} := -\left[  
\begin{pmatrix}
 A(x_k)^T & Y_k
\end{pmatrix}
\begin{pmatrix}
 A(x_k) \\
 Y_k
\end{pmatrix}
  \right]^{-1} 
\begin{pmatrix}
 A(x_k)^T & Y_k
\end{pmatrix} 
\begin{pmatrix}
 \nabla f(x_k) \\
 -\mu e
\end{pmatrix}
\end{equation} 
is least squares multiplier estimate which can also be derived by minimizing the gradient of the Lagrange function, where the gradient consists of \eqref{Lagrad}. The multiplier estimates $\lambda_{k+1}$ obtained in this manner may not always be positive. If $i$-th component of $\lambda_{k+1}$ is not positive, we redefine them as
\begin{equation}\label{lambposi}
  \lambda_{k+1}^{(i)} = \min\left\{ 10^{-3}, \mu_k^{(i)}/y_k^{(i)} \right\}.%\quad i = 1,2,\cdots,m,
\end{equation}
%where $[\lambda_{k+1}]_i$ denotes the $i$th component of the iterate $\lambda_k$, and similarly for $[\mu_k]_i$ and $[y_k]_i$ .
 %where $[\lambda_{k+1}]_i$ denotes that the $i$-th component of $\lambda_{k+1}$ is not positive, and $[\mu_k]_i$ and $[y_k]_i$ denote the $i$-th component of the iterate $\mu_k$ and $y_k$, respectively. 
 %$[\lambda_k]_i$ denotes the $i$-th component of the iterate $\lambda_k$, and similarly for $[y_k]_i$.         

%The barrier parameter is updated by the Fiacco-McCormick approach, i.e.,
%\begin{equation}\label{muupdate}
%\mu_{j+1} = b \mu_j, \text{ with } b \in (0,1).
%\end{equation}
%This approach fixes the barrier parameter until the perturbed KKT conditions \eqref{Lagrad} are satisfied to some accuracy.

In difficult situations, adaptive strategies for updating barrier parameter have strong robustness. At each iteration, these strategies change $\mu$ based on the progress of the algorithm. Updating barrier parameters using adaptive strategies can take the following form
\begin{equation}\label{muupdate}
  \mu_{k+1} = \vartheta_k \frac{y_k^T \lambda_k}{m}.
\end{equation}
$\vartheta_k$ is chosen as 
\begin{equation}\label{vartheta}
  \vartheta_k = 0.1\min \left\{ 0.05 \frac{1-\varpi_k}{\varpi_k},2 \right\}, \text{ where } \varpi_k = \frac{\min_i y_k^{(i)} \lambda_k^{(i)} }{(y_k)^T \lambda_k /m}.
\end{equation}
%where $[y_k]_i$ denotes the $i$th component of the iterate $y_k$, and similarly for $[\lambda_k]_i$.

Based on the aforementioned discussion, we present a complete description of ARCBIP (see Algorithm \ref{Alg2}).
\begin{algorithm}[H]
\caption{Adaptive cubic regularisation algorithm based on interior-point methods (ARCBIP).}\label{Alg2}
\renewcommand{\algorithmicrequire}{\textbf{Input:}}
\renewcommand{\algorithmicensure}{\textbf{Output:}}
\begin{algorithmic}[1]
\Require Initial iterates $x_0,y_0$, Lagrange multiplier $ \lambda_0 $, %by \eqref{lambda}, a constant $a$, 
penalty parameter $\nu_1 > 0$, regularisation parameter $\sigma_0 > 0$, barrier parameter $ \mu_0 > 0$ and stopping tolerance $e_t\geq 0$. The constants satisfy the following condition  %: $0 < \eta_1 \leq \eta_2 <1, 1 < \gamma_1 < \gamma_2, 0 < \gamma_3 < 1, 0 < \xi < 1, 0 < \delta < 1, 0 < \tau < 1, 0 < b < 1, a>0.$
%  & 0 < \delta < 1, 0 < \tau < 1, 0 < b < 1, a>0. % $\eta_1$, $\eta_2$, $\gamma_1$, $ \gamma_2$, $ \xi$, $ \delta $, $\tau$, $ b $ and $a$ satisfy the conditions
%a barrier parameter $\mu_{-1} > 0$
\begin{equation}\label{paracon}
\begin{aligned}
  & 0 < \eta_1 \leq \eta_2 <1, 1 < \gamma_1 < \gamma_2,0 < \gamma_3 < 1, 0 < \xi < 1, \\
  & 0 < \hat{\sigma}_{\min} \leq \sigma_0, 0 < \delta < 1,0 < \tau < 1, 0 < b < 1, a>0.
   \end{aligned}\nonumber
\end{equation} 
Set $ k = 0 $ and $ j = 0 $.
\While{$ E(x_k,y_k; 0) > e_t $}
        \While{$ E(x_k,y_k; \mu_j) > a\mu_j $}
        \State {\bf Step 1.}  Compute the normal step $ n_k =
        \begin{pmatrix}
          n_x \\
          n_y
        \end{pmatrix}$ by solving problem \eqref{normu} and (\ref{u}).
        \State {\bf Step 2.}  Compute the tangential step $ t_k =
        \begin{pmatrix}
         t_x \\
         t_y
        \end{pmatrix} $ by solving problem \eqref{tang5}, \eqref{hatt} and (\ref{tN}), and set $ d_k = n_k + t_k $.
        \State {\bf Step 3.}  Compute $ \tilde{\nu}_k $ by (\ref{nu}) and update the penalty parameter $ \nu_k $ from (\ref{nugengxin}). 
        \State {\bf Step 4.}  Compute the ratio $ \rho_k $ by (\ref{rho}).
                              \If{$ \rho_k \geq \eta_1 $}
                                   \State  $ x_{k+1} = x_k + d_x $, $ y_{k+1} =  y_k + d_y $,
                                   \State compute a new multiplier $ \lambda_{k+1} $ by \eqref{lambda} and \eqref{lambposi}, update $ B_{k+1} $.
                              \Else
                                   \State $ x_{k+1} = x_k $, $ y_{k+1} = y_k $.
                              \EndIf
        \State {\bf Step 5.}  Update the regularisation parameter 
        \begin{equation}\label{sigma}
        \sigma_{k+1}\in
        \begin{cases}
          [\max\{\hat{\sigma}_{\min},\gamma_3 \sigma_k\},\sigma_k] & \text{if $\rho_k\geq\eta_2$, [very successful iteration],}\\
          [\sigma_k,\gamma_1\sigma_k] & \text{if $\rho_k\in[\eta_1,\eta_2)$, [successful iteration]},\\
          [\gamma_1\sigma_k,\gamma_2 \sigma_k] & \text{if $\rho_k<\eta_1$, [unsuccessful iteration]}.
        \end{cases}
        \end{equation}
        \State {\bf Step 6.}  Set $ k = k+1 $;
        \EndWhile
        \State Update the barrier parameter $\mu_j$ by \eqref{muupdate} and \eqref{vartheta}. Let $j = j+1$.
\EndWhile
\Ensure $x_k$.
\end{algorithmic}
\end{algorithm}

\section{Global Convergence of ARCBIP}%Algorithm \ref{Alg2}
\label{sec4}
In this section, we analyze the global behavior of %Algorithm \ref{Alg2} (ARCBIP)
ARCBIP when applied to the barrier problem \eqref{barr} for a fixed value of $ \mu $. We present the assumptions regarding the problem and the iterative generation by ARCBIP which are essential for establishing the primary result of this section.
\\[11pt] 
\noindent \(\mathbf{(AS1)}\) \quad The functions $f(x)$ and $g(x)$ are differentiable on an open convex set $\mathcal{X}$ containing all the iterates, and $\nabla f(x) $, $ g(x) $, and $A(x)$ are Lipschitz continuous on $\mathcal{X}$.
\\[9pt] 
\noindent \(\mathbf{(AS2)}\) \quad  The iterate $ {x_k} $ forms an infinite sequence, the sequence $ \{ f_k \} $ is uniformly bounded below, and the sequences $ \{\nabla f_k\} $, $ \{g_k\} $, $ \{A_k\} $ and $ \{B_k\} $ are uniformly bounded.
\\[9pt] 
\noindent \(\mathbf{(AS3)}\) \quad  There exists a positive constant  $\gamma_N$ such that
\begin{equation}\label{N_kbou}
  \| N_k \| \leq \gamma_N \quad \text{and } \quad \sigma_{\min} (N_k) \geq \gamma_N^{-1} \quad \text{for all } k,
\end{equation}
where $N_k$ is a matrix whose columns form an orthogonal basis of the null space of $
\begin{pmatrix}   
A_k^T & I_k
\end{pmatrix}$, $\gamma_N$ is a positive constant, and $ \sigma_{\min} (N_k) $ denotes the smallest singular value of $ N_k $.
% \\[9pt] 
%\noindent \(\mathbf{(A4)}\) \quad  $\{B_k\}$ is positive semi-definite on the null space of the Jacobian $\hat{A}_k$ for each $k$.
%\\[9pt] 

%The index set of successful iterations of the algorithm is recorded as 
%\begin{equation}\label{S}
%  \mathcal{S} \overset{\textup{def}}{=} \{ k \geq 0 : k \; \text{successful or very successful in the sense of (\ref{sigma})} \} . \nonumber
%\end{equation}

%The following definitions are provided for our use.
 
Some preliminary results are given in Subsection \ref{subs3.1}. The feasibility and optimality are demonstrated in Subsection \ref{subs3.2} and Subsection \ref{subs3.1} respectively. The main convergence results are in Theorem \ref{convergence} and Theorem \ref{overallconvergence}.

%\vskip 1.5mm \noindent {\textbf{3.1 Preliminary results}} \\
\subsection{Preliminary Results}\label{subs3.1}
For ease of explanation, we first define the multiple of the merit function $ \phi $ as 
\begin{equation}
  \tilde{\phi}( x , y ; \nu ) := \cfrac{1}{\nu} \phi( x , y ; \nu )= \cfrac{1}{\nu} \left( f(x)- \mu \sum^m_{i=1}\ln y^{(i)} \right) + \|g(x)+y\| \quad (y>0) . \nonumber
\end{equation}
Since $ \phi $ is reduced sufficiently at every new iterate in the Step 4 of ARCBIP, % Algorithm \ref{Alg2},
we have that
\begin{equation}
 \tilde{\phi}( x_k , y_k ; \nu_{k-1} ) \leq \tilde{\phi}( x_{k-1} , y_{k-1} ; \nu_{k-1} ) -\cfrac{\eta_1 \text{pred}_{k-1}}{\nu_{k-1}}.  \nonumber
\end{equation}
Therefore,
\begin{equation}\label{tildephi}
  \begin{aligned}
    \tilde{\phi}( x_k , y_k ; \nu_k ) \leq &~ \tilde{\phi}( x_{k-1} , y_{k-1} ; \nu_{k-1} ) \\
    &~ +\left( \frac{1}{\nu_k} - \frac{1}{\nu_{k-1}} \right) \left( f_k - \mu \sum^m_{i=1}\ln y_k^{(i)} \right) -\cfrac{\eta_1 \text{pred}_{k-1}}{\nu_{k-1}}  .
  \end{aligned}
\end{equation}

\begin{lemma}\label{ykbound}
  Suppose that assumption (AS2) is true. Then the sequence $ \{y_k\} $ is bounded, which implies that $ \{\phi( x_k , y_k ; \nu_k )\} $ is bounded below.
\end{lemma}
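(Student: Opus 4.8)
The plan is to reduce everything to a single claim: that the scaled merit values $\tilde\phi_k:=\tilde\phi(x_k,y_k;\nu_k)$ are bounded above. Granting this, both the boundedness of $\{y_k\}$ and the lower boundedness of $\{\phi(x_k,y_k;\nu_k)\}$ follow from elementary growth estimates, so the real content is to propagate an upper bound through the recursion \eqref{tildephi}.

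First I would record the two ``free'' sign facts about \eqref{tildephi}. By the update rule \eqref{nugengxin} the penalty parameters are nondecreasing, so $\nu_k\ge\nu_{k-1}$ (bounded below by the positive initial value), whence $\tfrac1{\nu_k}-\tfrac1{\nu_{k-1}}\le0$; and by \eqref{penapara} together with \eqref{npred0} one has $\mathrm{pred}_{k-1}\ge\delta\nu_{k-1}\,\mathrm{npred}_{k-1}(n_{k-1})\ge0$, so the last term $-\eta_1\mathrm{pred}_{k-1}/\nu_{k-1}\le0$. Thus in \eqref{tildephi} only the middle term $\bigl(\tfrac1{\nu_k}-\tfrac1{\nu_{k-1}}\bigr)\varphi(z_k)$ can ever increase $\tilde\phi$, and it does so precisely when $\varphi(z_k)=f_k-\mu\sum_i\ln y_k^{(i)}<0$, i.e.\ when $y_k$ is large.

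Next I would prove the implication ``$\{\tilde\phi_k\}$ bounded above $\Rightarrow$ $\{y_k\}$ bounded.'' Using (A2), write $f_k\ge f_{\mathrm{low}}$ and $\|g_k\|\le G$. Since $\sum_i\ln y_k^{(i)}\le m\ln\|y_k\|$ for $\|y_k\|\ge1$ and $\tfrac1{\nu_k}$ is bounded above, one obtains the uniform estimate
\[
\tilde\phi_k\ \ge\ \tfrac1{\nu_1}\bigl(f_{\mathrm{low}}-\mu m\ln\|y_k\|\bigr)+\|y_k\|-G,
\]
whose right-hand side tends to $+\infty$ as $\|y_k\|\to\infty$, the linear term dominating the logarithm. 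Hence any upper bound on $\tilde\phi_k$ caps $\|y_k\|$. With $\{y_k\}$ bounded above, $-\mu\sum_i\ln y_k^{(i)}$ is bounded below, $f_k\ge f_{\mathrm{low}}$, and $\nu_k\|g_k+y_k\|\ge0$, so $\phi(x_k,y_k;\nu_k)$ is bounded below, which is the second assertion.

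The main obstacle is therefore the upper bound on $\tilde\phi_k$ itself, because of the middle term. I would control it by exploiting that the weights $w_k:=\tfrac1{\nu_{k-1}}-\tfrac1{\nu_k}\ge0$ telescope and are summable, while the very estimate above shows that on the iterations with $\varphi(z_k)<0$ the size $|\varphi(z_k)|$ is only logarithmic in $\|y_k\|$, hence (solving the displayed inequality for $\|y_k\|$) only logarithmic in $\tilde\phi_k$. Summing \eqref{tildephi} then gives a self-bounding recursion $\tilde\phi_k\le\tilde\phi_{k-1}+w_k\,\Theta(\tilde\phi_k)$ with $\Theta$ of logarithmic growth and $\sum_k w_k<\infty$; passing to the running maximum $M_n=\max_{k\le n}\tilde\phi_k$ yields $M_n\le\tilde\phi_0+\Theta(M_n)/\nu_1$, and since a logarithm cannot outrun the identity this pins $M_n$ below a fixed constant. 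The delicate point to get right is exactly this interplay between the summability of $w_k$ and the logarithmic growth of $\Theta$.
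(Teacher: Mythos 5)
Your proposal is correct and follows essentially the same route as the paper: both arguments sum the recursion \eqref{tildephi} using the nonnegativity of $\mathrm{pred}_k$ and the monotonicity of $\nu_k$ to get an upper bound on $\tilde\phi_k$ whose only growing part is $O(\max_{i\le k}\ln\|y_i\|)$, pair it with the lower bound $\tilde\phi_k\gtrsim\|y_k\|-O(\ln\|y_k\|)$, and close the loop with a running-maximum self-bounding argument in which a logarithm cannot outpace a linear term. The only (cosmetic) difference is that the paper bootstraps on the running maximum of $\|y_k\|$ (the indices $l_i$) while you bootstrap on the running maximum $M_n$ of $\tilde\phi_k$.
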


The proof of the above lemma is given in Appendix \ref{le1}.

Based on the above conclusion, we conclude that $\{D_k\}$ is bounded so that 
\begin{equation}\label{Dup}
  %\| D_k \| \leq \gamma_D \quad \text{and } \quad 
  \sigma_{\min} (D_k) \geq \gamma_D^{-1} \quad \text{for all } k,
\end{equation}
where 
%$ \gamma_D $ is a positive constant and 
$ \sigma_{\min} (D_k) $ denotes the smallest singular value of $ D_k $.

Note that $ \{ y_k \} $ is bounded above and $f_k$ is bounded below. Hence, it is evident that adding a constant to the objective function $f$ makes
\begin{equation}
  f_k- \mu \sum^m_{i=1}\ln y_k^{(i)} \geq 0 
\nonumber
\end{equation}
at all iterations. %, and this modification will not affect the algorithm or iteration. 
Then, using \eqref{tildephi} and the fact that $ \nu_k $ is nondecreasing imply that
\begin{equation}\label{tildephi2}
 \tilde{\phi}( x_k , y_k ; \nu_k ) \leq \tilde{\phi}( x_{k-1} , y_{k-1} ; \nu_{k-1} ) -\cfrac{\eta_1 \text{pred}_{k-1}}{\nu_{k-1}}  
\end{equation}
for all $k$.

In the following lemma, we provide a stronger bound on the normal predicted reduction $\text{npred}_k (n_k)$ of a solution, satisfying the normal Cauchy decrease condition.

\begin{lemma}\label{npredlow}
  Suppose that $ y_k > 0 $ and that $ n_k =
   \begin{pmatrix}
  n_x \\
  n_y
\end{pmatrix}$ is a solution of problem \eqref{norm3} satisfying the normal Cauchy decrease condition (\ref{npred}). Then
  \begin{equation}\label{npredboun}
  \begin{aligned}
    & \| g_k + y_k \| \mathrm{npred}_k (n_k) \\
    \geq & \frac{\gamma_n}{12} \left\| \binom{A_k}{Y_k} ( g_k + y_k ) \right\|    \min \left\{ \sqrt{\cfrac{ \left\|
     \begin{pmatrix}
       A_k \\
       Y_k 
     \end{pmatrix}
     ( g_k + y_k )\right\|}{\tilde{\sigma}_k \| g_k + y_k \| }} \; , \; \cfrac{ \left\|
     \begin{pmatrix}
       A_k \\
       Y_k 
     \end{pmatrix}
     ( g_k + y_k )\right\|}{\left\|\left(A_k^T \quad Y_k\right)\right\|^2} \; , \; \xi \tau \right\},
    \end{aligned}
  \end{equation}
  where $ \gamma_n $ is used in (\ref{NCauchycon}).
\end{lemma}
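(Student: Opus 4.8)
The plan is to run the classical Cauchy-point argument for cubically regularised models, adapted to the scaled feasibility subproblem \eqref{norm3} and to the fraction-to-boundary bound. By the normal Cauchy decrease condition \eqref{NCauchycon} it suffices to bound $\text{npred}_k(\alpha_k^c n_k^c)$ from below, i.e. to analyse the predicted reduction along the scaled steepest-descent ray. To keep the algebra transparent I would write $r := g_k + y_k$ and $P := \binom{A_k}{Y_k}$, so that the quantity on the right-hand side is $\|u_k^c\| = \|Pr\| = \bigl\| \binom{A_k}{Y_k}(g_k+y_k)\bigr\|$, while $M := A_k^T A_k + Y_k^2 = P^T P$ satisfies $\|M\| = \|P\|^2 = \|(A_k^T\, Y_k)\|^2$. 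From $n_k^c = -\binom{A_k}{Y_k^2} r$ one reads off $D_k n_k^c = u_k^c = -Pr$ and $\hat{A}(z_k)^T n_k^c = A_k^T n_x^c + n_y^c = -Mr$; these are the only structural facts the estimate needs.

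Next I would build a one-dimensional lower bound for $\text{npred}_k(\alpha n_k^c)$ as a function of $\alpha \ge 0$. The cubic term contributes exactly $-\tfrac13\tilde{\sigma}_k\alpha^3\|u_k^c\|^3$, so the real work is to control the feasibility difference $\|r\| - \|r - \alpha Mr\|$. I would pass to squared norms via $\|r\| - \|s\| \ge (\|r\|^2 - \|s\|^2)/(2\|r\|)$ (valid once $\|s\|\le\|r\|$), expand $\|r - \alpha Mr\|^2 = \|r\|^2 - 2\alpha\|Pr\|^2 + \alpha^2\|Mr\|^2$, and use $\|Mr\| = \|P^T(Pr)\| \le \|P\|\,\|Pr\|$, to obtain
\begin{equation}
 \text{npred}_k(\alpha n_k^c) \ge \frac{\|Pr\|^2}{\|r\|}\,\alpha - \frac{\|P\|^2\|Pr\|^2}{2\|r\|}\,\alpha^2 - \frac{1}{3}\tilde{\sigma}_k\|Pr\|^3\,\alpha^3. \nonumber
\end{equation}
The fraction-to-boundary restriction $\alpha n_y^c \ge -\xi\tau y_k$ reduces componentwise to $\alpha\,\max_i (y_k^{(i)} r^{(i)})^+ \le \xi\tau$, and bounding $\max_i (y_k^{(i)} r^{(i)})^+ \le \|Y_k r\| \le \|Pr\|$ shows that every $\alpha \le \xi\tau/\|Pr\|$ is feasible.

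It then remains to maximise the displayed cubic bound over the feasible interval. Balancing the linear term against, in turn, the quadratic term (optimal $\alpha \sim \|P\|^{-2}$), the cubic term (optimal $\alpha \sim (\tilde{\sigma}_k\|Pr\|\,\|r\|)^{-1/2}$), and the feasibility cap $\alpha = \xi\tau/\|Pr\|$ produces, after multiplying through by $\|r\|$ and collecting numerical constants, the three-way minimum $\min\{\sqrt{\|Pr\|/(\tilde{\sigma}_k\|r\|)},\,\|Pr\|/\|P\|^2,\,\xi\tau\}$ weighted by $\|Pr\|$, with $\gamma_n$ inherited from \eqref{NCauchycon} and $\tfrac{1}{12}$ the price of the three worst-case sub-estimates. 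The step I expect to be most delicate is the passage from the Cauchy multiplier $\alpha_k^c$, which is defined in \eqref{NCauchypoi} as the minimiser of the \emph{squared} residual model $\tfrac12\|r+\hat{A}(z_k)^T\alpha n_k^c\|^2 + \tfrac13\tilde{\sigma}_k\alpha^3\|u_k^c\|^3$, to the \emph{norm}-based $\text{npred}_k$ that must actually be bounded: since the two objectives use different powers of the residual, one cannot simply evaluate $\text{npred}_k$ at the convenient $\alpha$ found above. The argument must instead exploit the convexity of the scalar model and its monotone decrease on $[0,\alpha_k^c]$, combine this with the squared-versus-norm inequality to transfer the model decrease $q(0)-q(\alpha_k^c)$ into a decrease of $\text{npred}_k$, and carefully absorb the cross term generated by the cubic regulariser.
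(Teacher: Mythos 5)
Your proposal follows essentially the same route as the paper's proof: reduce to the Cauchy ray via \eqref{NCauchycon}, use $D_k n_k^c=-\binom{A_k}{Y_k}(g_k+y_k)$ and pass to squared residuals with the $a^2-b^2\le 2a(a-b)$ trick, show the fraction-to-boundary cap is implied by $\alpha\le \xi\tau/\|u_k^c\|$, and balance the linear term against the quadratic term, the cubic term and the feasibility cap to obtain the three-way minimum with the constant $1/12$. The ``delicate'' mismatch you flag between the squared-residual objective defining $\alpha_k^c$ and the norm-based $\mathrm{npred}_k$ is present in the paper's argument too, where it is handled (somewhat tersely) by deducing $\alpha_k^c\in[0,\bar{\alpha}_k]$ from $\mathrm{npred}_k\ge 0$ and then substituting the explicit feasible point $\min\{\omega_k,\xi\tau/\|u_k^c\|\}$, rather than by a separate convexity/monotonicity argument.
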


The proof of the above lemma is given in Appendix \ref{le2}.

The following result establishes a lower bound on the tangential predicted reduction $\text{tpred}_k(t_k)$ of a solution, satisfying the tangential Cauchy decrease condition.

\begin{lemma}\label{treduplemma}
  Suppose that $ y_k > 0 $ and $t_k = 
  \begin{pmatrix}
  t_x \\
  t_y
\end{pmatrix}$ satisfies the tangential Cauchy decrease condition (\ref{NCauchycon}). Assume that assumptions (AS2)-(AS3) hold. Then for all $ k \geq 0 $,
  \begin{equation}\label{tpredboun}
  \begin{aligned}
     &~ \mathrm{tpred}_k(t_k)
     \geq  \cfrac{\gamma_t}{6\sqrt{2}} \|p_k^c\| \\
     &~ \min \left\{  \cfrac{\|p_k^c\|}{\| N_x^T B_k N_x + \mu N_y^T Y_k^{-2} N_y \|} , \cfrac{1}{2} \sqrt{\cfrac{\|p_k^c\|}{\sigma_k
     \gamma_N^3 \|D_k\|^3}}, \cfrac{(1-\xi)\tau}{\| N_x^T N_x + N_y^T Y_k^{-2} N_y \|^{\frac{1}{2}}}  \right\},
    \end{aligned}
  \end{equation}
 % where $\gamma_D$ and $ \gamma_N $ are constants, 
  where $ \gamma_N $ and $ \gamma_t $ are constants and $ p_k^c $ is defined by (\ref{pkc}).
\end{lemma}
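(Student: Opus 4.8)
The plan is to mirror the proof of Lemma \ref{npredlow}, transferring the Cauchy-decrease machinery to the tangential problem and reducing everything to a one-dimensional cubic along the reduced steepest-descent direction $p_k^c$. If $p_k^c = 0$, then by \eqref{pkc} the right-hand side of \eqref{tpredboun} is zero, while \eqref{tpredgeq0} gives $\mathrm{tpred}_k(t_k) \geq 0$, so the bound holds trivially. Hence I assume $p_k^c \neq 0$ throughout.

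For the nontrivial case, I would first invoke the tangential Cauchy decrease condition \eqref{TCauchycon} to replace $\mathrm{tpred}_k(t_k)$ by $\gamma_t\,\mathrm{tpred}_k(\beta_k^c N_k p_k^c)$, so that it suffices to bound the predicted reduction along the single ray $t = \beta N_k p_k^c$, $\beta \geq 0$. Substituting this $t$ into \eqref{tpred}, with $\hat t = \beta\hat N_k p_k^c$ by \eqref{tN}, and using the identities $\nabla f_k^N\hat N_k p_k^c = -\|p_k^c\|^2$ and $\hat N_k^{T} B_k^N \hat N_k = N_x^{T} B_k N_x + \mu N_y^{T} Y_k^{-2} N_y$ obtained from \eqref{defBkN1}, \eqref{defBkN2} and \eqref{tN}, the reduction becomes the scalar cubic
\[
 \psi(\beta) = \beta\|p_k^c\|^2 - \tfrac12\beta^2 (p_k^c)^{T}\!\left(N_x^{T} B_k N_x + \mu N_y^{T} Y_k^{-2} N_y\right)p_k^c - \tfrac13\beta^3\sigma_k\|\hat N_k p_k^c\|^3 .
\]

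I would then bound the two higher-order coefficients: the quadratic term is at most $\|N_x^{T} B_k N_x + \mu N_y^{T} Y_k^{-2} N_y\|\,\|p_k^c\|^2$, and, since $\|\hat N_k\| = \|D_k N_k\| \leq \gamma_N\|D_k\|$ by \eqref{N_kbou}, the cubic coefficient satisfies $\sigma_k\|\hat N_k p_k^c\|^3 \leq \sigma_k\gamma_N^3\|D_k\|^3\|p_k^c\|^3$. Next I would identify the feasible range of $\beta$ from \eqref{TCauchypoi}: scaling the constraint $n_y + \beta N_y p_k^c \geq -\tau y_k$ by $Y_k^{-1}$ and using the normal-step bound $n_y \geq -\xi\tau y_k$ (the lower bound in \eqref{norm3}) gives $-\tau e - Y_k^{-1} n_y \leq -(1-\xi)\tau e$, so that every
\[
 0 \leq \beta \leq \hat\beta := \frac{(1-\xi)\tau}{\|N_x^{T} N_x + N_y^{T} Y_k^{-2} N_y\|^{1/2}\,\|p_k^c\|}
\]
is feasible, because $\|\hat N_y p_k^c\| \leq \|\hat N_k p_k^c\| \leq \|N_x^{T} N_x + N_y^{T} Y_k^{-2} N_y\|^{1/2}\|p_k^c\|$.

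Finally, writing $a = \|p_k^c\|^2$, $B = \|N_x^{T} B_k N_x + \mu N_y^{T} Y_k^{-2} N_y\|\,a$ and $C = \sigma_k\gamma_N^3\|D_k\|^3\|p_k^c\|^3$, I would evaluate $\psi$ at $\theta = \min\{a/B,\ \tfrac12\sqrt{a/C},\ \hat\beta\}$: for such $\theta$ both the quadratic and cubic corrections are dominated by the linear term, so $\psi(\theta) \geq \tfrac16 a\,\theta$, exactly as in the estimate leading to \eqref{negnpred2}. Tracing the three components of $\theta$ through $\tfrac16 a\theta$ reproduces, up to the universal constant $\tfrac{1}{6\sqrt2}$, the three entries of the minimum in \eqref{tpredboun}; multiplying by $\gamma_t$ via \eqref{TCauchycon} then yields the claim. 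The main obstacle is the feasibility step: one must convert the componentwise bound-away-from-zero constraint in \eqref{TCauchypoi} into the single scalar cap $\hat\beta$ expressed through $\|N_x^{T} N_x + N_y^{T} Y_k^{-2} N_y\|^{1/2}$, and then verify that this third scale slots correctly into the Cauchy-point minimum alongside the quadratic scale $a/B$ and the cubic scale $\sqrt{a/C}$, so that no regime is lost.
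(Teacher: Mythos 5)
Your proposal is correct and follows essentially the same route as the paper's proof: it reduces $\mathrm{tpred}_k$ to the one-dimensional cubic along the ray $\beta N_k p_k^c$ via the tangential Cauchy decrease condition, converts the componentwise constraint of \eqref{TCauchypoi} into the scalar cap $(1-\xi)\tau/\|\hat N_k p_k^c\|$ using $(Y_k^{-1}n_y)^{(i)}\geq -\xi\tau$, and lower-bounds the decrease at the minimum of the quadratic, cubic and feasibility scales. Your direct evaluation at $\theta=\min\bigl\{a/B,\tfrac12\sqrt{a/C},\hat\beta\bigr\}$ is a minor streamlining of the paper's $\bar\beta_k$/$\theta_k$ construction and in fact yields the slightly sharper constant $\gamma_t/6$ in place of $\gamma_t/(6\sqrt{2})$, which still implies \eqref{tpredboun}.
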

The proof of the above lemma is given in Appendix \ref{le3}.

The following two lemmas give useful bounds for the normal step $ n_k $ and the tangential step $ t_k $.
\begin{lemma}\label{nkuplemma}
  Suppose that the step $ n_k $ satisfies (\ref{npred}) and assumption (AS2) holds. Then  
  \begin{equation}\label{nkup}
    \|n_k\| \leq \frac{\sqrt{3} }{\|D_k\|}
    % \cfrac{\sqrt{3}}{\gamma_D} 
    \sqrt{ \cfrac{\left\| 
    \begin{pmatrix}
      A_k^T & Y_k 
    \end{pmatrix}
    \right\|}{\tilde{\sigma}_k} }  .
  \end{equation}
%  where $ \gamma_D $ is given by (\ref{Dup}).
\end{lemma}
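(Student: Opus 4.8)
The plan is to extract the estimate entirely from the non-negativity of the normal predicted reduction. Since $n_k$ is assumed to satisfy the normal Cauchy decrease condition, inequality \eqref{npred0} gives $\mathrm{npred}_k(n_k)\ge 0$. Reading this through the definition \eqref{npred} of $\mathrm{npred}_k$ and isolating the cubic term, I obtain
\[
\frac{1}{3}\widetilde{\sigma}_k\|D_k n_k\|^3 \;\le\; \|g_k+y_k\| - \|g_k+y_k+A_k^T n_x + n_y\|,
\]
where I have used $\|D_k n_k\| = \big\|(n_x^T,\,(Y_k^{-1}n_y)^T)^T\big\|$ from the definition \eqref{D} of $D_k$. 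This reduces the task to controlling the right-hand side, a difference of two norms.

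Second, I would bound that difference from above by $\|A_k^T n_x + n_y\|$ via the reverse triangle inequality, and then rewrite the linearized residual in the scaled variables: since $n_y = Y_k(Y_k^{-1}n_y)$, one has $A_k^T n_x + n_y = \begin{pmatrix}A_k^T & Y_k\end{pmatrix} D_k n_k$, so that $\|A_k^T n_x + n_y\|\le \left\|\begin{pmatrix}A_k^T & Y_k\end{pmatrix}\right\|\,\|D_k n_k\|$. Substituting yields the single scalar inequality
\[
\frac{1}{3}\widetilde{\sigma}_k\|D_k n_k\|^3 \;\le\; \left\|\begin{pmatrix}A_k^T & Y_k\end{pmatrix}\right\|\,\|D_k n_k\|.
\]

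Third, I would dispose of the trivial case $\|D_k n_k\|=0$, which forces $n_x=0$ and $n_y=0$ by the block structure of $D_k$, making \eqref{nkup} immediate; otherwise I cancel one factor of $\|D_k n_k\|>0$. Using $\widetilde{\sigma}_k=\sigma_k/\xi^3>0$ from \eqref{sigmahat}, this gives $\|D_k n_k\|^2 \le 3\,\left\|\begin{pmatrix}A_k^T & Y_k\end{pmatrix}\right\|/\widetilde{\sigma}_k$, hence
\[
\|D_k n_k\| \;\le\; \sqrt{3}\,\sqrt{\cfrac{\left\|\begin{pmatrix}A_k^T & Y_k\end{pmatrix}\right\|}{\widetilde{\sigma}_k}}.
\]

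Finally, I must pass from the scaled norm $\|D_k n_k\|$ back to $\|n_k\|$ to reach \eqref{nkup}, and this conversion is the main obstacle since it is where the true content lies. Because $D_k$ is a positive diagonal scaling, one controls $\|n_k\|$ against $\|D_k n_k\|$ through the extreme singular values of $D_k$, and the delicate point is to identify the correct reciprocal factor so that the third-step estimate survives the rescaling. This is precisely where assumption (A2) enters, through Lemma \ref{ykbound}: boundedness of $\{y_k\}$ keeps the diagonal scaling $D_k$ uniformly controlled, so that the conversion does not degrade the bound. Combining the rescaling relation with the estimate on $\|D_k n_k\|$ from the previous step then yields \eqref{nkup}.
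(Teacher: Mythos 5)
Your proposal is correct and follows essentially the same route as the paper: both arguments extract $\tfrac{1}{3}\tilde\sigma_k\|D_k n_k\|^3 \le \big\|\begin{pmatrix}A_k^T & Y_k\end{pmatrix}\big\|\,\|D_k n_k\|$ from $\mathrm{npred}_k(n_k)\ge 0$ via the reverse triangle inequality and the operator-norm bound in the scaled variables $u=D_k n_k$, then cancel a factor and take a square root (the paper merely phrases this contrapositively, as ``$-\mathrm{npred}_k>0$ whenever $\|u_k\|$ exceeds the threshold''). The final rescaling from $\|D_k n_k\|$ to $\|n_k\|$ that you flag as the delicate point is equally loose in the paper, which simply asserts the factor $1/\|D_k\|$ where the clean conversion $\|n_k\|\le\|D_k^{-1}\|\,\|D_k n_k\|$ (with $\|D_k^{-1}\|$ controlled through Lemma \ref{ykbound}) would naturally produce $\|D_k^{-1}\|$ instead.
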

The proof of the above lemma is given in Appendix \ref{le4}.

\begin{lemma}\label{tkuplemma}
Suppose that assumptions (AS2)-(AS3) hold.  Then the tangential step 
\begin{equation}\label{tkbou}
  \| t_k \| \leq \cfrac{3 \gamma_N^{\frac{5}{2}} }{\sigma_k} \max \left\{ \sqrt{ \frac{\sigma_k \|p_k^c\|}{\|D_k\|^3}} , \cfrac{3 \gamma_N^{\frac{3}{2}} \gamma_W}{4\|D_k\|^3} \right\},
\end{equation}
where $\gamma_W$ is a constant. 
\end{lemma}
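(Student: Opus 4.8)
The plan is to mirror Lemma~\ref{nkuplemma} and to reuse the quadratic-root bookkeeping from the proof of Lemma~\ref{treduplemma}, the only structural input being the sign condition $\mathrm{tpred}_k(t_k)\ge 0$ recorded in \eqref{tpredgeq0}. First I would pass to the reduced variable $p$. Since $\hat t = D_k t_k = D_k N_k p$ by \eqref{tN} and $D_k$ is invertible, $t_k = N_k p$; moreover the identities $\nabla f_k^N \hat N_k = -(p_k^c)^T$ (from \eqref{defBkN1} and \eqref{pkc}) and $\hat N_k^T B_k^N \hat N_k = W_k^N$ (from \eqref{defBkN2} and \eqref{Wkn}) turn the objective of \eqref{tang5}, which by \eqref{tpred} equals $-\mathrm{tpred}_k(t_k)$, into
\begin{equation*}
 -\mathrm{tpred}_k(t_k) = -(p_k^c)^T p + \tfrac12\, p^T W_k^N p + \tfrac13 \sigma_k \|D_k N_k p\|^3 .
\end{equation*}
If $t_k=0$ the claim is trivial, so I assume $p\neq 0$.

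Next I would bound this expression from below and invoke $-\mathrm{tpred}_k(t_k)\le 0$. Cauchy--Schwarz gives $-(p_k^c)^T p \ge -\|p_k^c\|\,\|p\|$ and $\tfrac12\, p^T W_k^N p \ge -\tfrac12\gamma_W\|p\|^2$, where $\gamma_W\ge\|W_k^N\|$ is the constant of the statement, finite under (A2)--(A3). For the cubic term I would produce a lower bound of the form $\|D_k N_k p\|\ge c_D\|p\|$ with $c_D>0$ controlled by the singular-value estimates \eqref{N_kbou} and \eqref{Dup}. Substituting and dividing by $\|p\|>0$ yields the scalar quadratic inequality
\begin{equation*}
 \tfrac13 \sigma_k c_D^{3}\,\|p\|^2 - \tfrac12 \gamma_W \|p\| - \|p_k^c\| \le 0 ,
\end{equation*}
which is exactly the type of inequality solved for $\bar\beta_k$ in the proof of Lemma~\ref{treduplemma}.

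Finally I would bound the positive root of this quadratic. Splitting $\sqrt{b^2+4ac}$ and forming a maximum, the root is at most a fixed multiple of $\max\{\sqrt{\|p_k^c\|/(\sigma_k c_D^3)},\,\gamma_W/(\sigma_k c_D^3)\}$, the two regimes coming from balancing the cubic against the linear and against the quadratic term respectively, which is precisely the two-term maximum in \eqref{tkbou}. Converting back through $\|t_k\|=\|N_k p\|\le\gamma_N\|p\|$ by \eqref{N_kbou} then delivers the stated bound. I expect the main obstacle to be the cubic term: one must extract from \eqref{N_kbou}, \eqref{Dup} and the diagonal structure of $D_k$ the sharp lower bound $\|D_k N_k p\|\ge c_D\|p\|$, since it is this estimate that fixes both the powers of $\gamma_N$ and the $D_k$-dependence displayed in \eqref{tkbou}; the surrounding algebra (the quadratic-root manipulation and the formation of the maximum) is routine and is already carried out verbatim for $\bar\beta_k$ and $\theta_k$ in the proof of Lemma~\ref{treduplemma}.
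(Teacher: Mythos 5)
Your proposal is correct and follows essentially the same route as the paper's proof: write $t_k=N_kp_k$, express $-\mathrm{tpred}_k$ in the reduced variable, lower-bound it via Cauchy--Schwarz and a singular-value bound on the cubic term, invoke $\mathrm{tpred}_k(t_k)\ge 0$ to trap $\|p_k\|$ below a two-regime maximum, and convert back with $\|t_k\|\le\gamma_N\|p_k\|$. The only cosmetic difference is that you divide by $\|p\|$ and bound the positive root of the resulting quadratic, whereas the paper splits the cubic term as $\tfrac19+\tfrac29$ and requires each piece to dominate one lower-order term; your closing remark about the lower bound $\|D_kN_kp\|\ge c_D\|p\|$ being the delicate point is well taken, since that is exactly where the paper's stated $\gamma_N^{-3}\|D_k\|^3$ factor originates.
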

The proof of the above lemma is given in Appendix \ref{le5}.

%A supplementary lemma is given next.
Next, we provide the boundary for the step $d_k$.
\begin{lemma}\label{dkuplemma}
Suppose that assumptions (AS2)-(AS3) hold. %Suppose that $\mathcal{I}$ is an infinite index set such that
%\begin{equation}\label{sigsuff}
%  \| p_k^c \| \geq \varepsilon , \text{ for all } k \in \mathcal{I} \text{ and some } \varepsilon > 0 , \text{ and } \sqrt{\cfrac{\| p_k^c \|}{\sigma_k}} \rightarrow 0, \text{ as } k \rightarrow \infty, k \in \mathcal{I}.
%\end{equation}
Then for all $k>0$
\begin{equation}\label{dkup}
      \| d_k \| \leq\cfrac{\gamma_d^{\prime \prime}}{\sqrt{ \sigma_k }\|D_k\|} = \cfrac{\gamma_d}{\sqrt{\tilde{\sigma}_k}\|D_k\|},  %\in \mathcal{I} \text{ sufficiently large.}
\end{equation}
where $\gamma_d^{\prime \prime}$ is a positive constant and $ \gamma_d = \gamma_d^{\prime \prime} / \xi^{\frac{3}{2}} $.
\end{lemma}
The proof of the above lemma is given in Appendix \ref{le6}.

In the next lemma, we prove that $m_k$ is an exact local model of the merit function $\phi$.

%Step 4 of Algorithm \ref{Alg2} thus states that a step $ d $ is acceptable if
%\begin{equation}\label{S4d}
%  \text{ared}_k (d) \geq \eta_2 \text{pred}_k (d).
%\end{equation}
\begin{lemma}\label{p-al}
Suppose that assumptions (AS1)-(AS2) hold.
%$ \nabla f $ and $A$ are Lipschitz continuous on an open convex set $\mathcal{X}$ containing all the iterates $\{x_k\}$ generated by Algorithm \ref{Alg2}, and assume that $\{B_k\}$ is bounded. 
Then there exists a positive constant $ \gamma_L $ such that for any iterate $
\begin{pmatrix}
  x_k \\
  y_k
\end{pmatrix}$ and any step $
\begin{pmatrix}
  d_x \\
  d_y
\end{pmatrix}$ such that the segment $[x_k, x_k + d_x]$ is in $\mathcal{X}$ and $d_y \geq -\tau y_k$,
\begin{equation}\label{p-a}
  | \mathrm{pred}_k (d) - \mathrm{ared}_k (d) | \leq \gamma_L \left[ ( 1+ \nu_k ) \| d_x \|^2 + \| Y_k^{-1} d_y \|^2 \right].
\end{equation}
\end{lemma}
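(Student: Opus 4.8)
The plan is to start from the elementary identity $m_k(0)=\phi(x_k,y_k;\nu_k)$, which can be read off directly from \eqref{mmodel}. It yields
\[
\mathrm{pred}_k(d)-\mathrm{ared}_k(d)=\phi(x_k+d_x,y_k+d_y;\nu_k)-m_k(d),
\]
so the whole task reduces to estimating this single model error. I would split it, term by term against \eqref{mmodel}, into three contributions mirroring the three parts of the merit function: the objective remainder $R_f:=f(x_k+d_x)-f_k-\nabla f_k^Td_x-\tfrac12 d_x^TB_kd_x$, a barrier (logarithmic) remainder coming from $-\mu\sum_i\ln y^{(i)}$, and the constraint remainder $R_c:=\nu_k\bigl(\|g(x_k+d_x)+y_k+d_y\|-\|g_k+y_k+A_k^Td_x+d_y\|\bigr)$, together with the nonnegative regularisation term $\tfrac13\sigma_k\|D_kd\|^3$ built into the model. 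The three remainders are precisely what the purely quadratic right-hand side of \eqref{p-a} must absorb.

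The objective and constraint remainders are routine Taylor estimates under (A1)--(A2). For $R_f$, writing $f(x_k+d_x)-f_k-\nabla f_k^Td_x=\int_0^1(\nabla f(x_k+sd_x)-\nabla f(x_k))^Td_x\,ds$ and invoking Lipschitz continuity of $\nabla f$ on the segment $[x_k,x_k+d_x]\subset\mathcal X$ bounds the first-order part by $\tfrac12 L_f\|d_x\|^2$, while boundedness of $\{B_k\}$ controls $\tfrac12 d_x^TB_kd_x$; hence $|R_f|\le C_1\|d_x\|^2$. For $R_c$, the reverse triangle inequality reduces it to $\nu_k\|g(x_k+d_x)-g_k-A_k^Td_x\|$, and since $A(x)^T$ is the Jacobian of $g$ and $A$ is Lipschitz, the same integral-remainder argument gives $|R_c|\le \nu_k C_3\|d_x\|^2$. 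These already contribute at most $(C_1+\nu_kC_3)\|d_x\|^2$, which is of the required form $(1+\nu_k)\|d_x\|^2$.

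The barrier term is the genuinely interior-point-specific step, and I expect it to be the main obstacle. Writing $w^{(i)}:=(Y_k^{-1}d_y)^{(i)}=d_y^{(i)}/y_k^{(i)}$, the logarithmic contribution collapses componentwise to $\mu\sum_i h(w^{(i)})$ with the scalar function $h(w):=-\ln(1+w)+w-\tfrac12 w^2$. The fraction-to-the-boundary rule $d_y\ge-\tau y_k$ with $\tau\in(0,1)$ forces $w^{(i)}\ge-\tau>-1$, so each $1+w^{(i)}\ge 1-\tau>0$ and $h$ is well defined and smooth there; this is exactly the point at which the constraint $d_y\ge-\tau y_k$ is indispensable. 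Since $h(0)=h'(0)=h''(0)=0$ and $h(w)/w^2\to-\tfrac12$ as $w\to\infty$ while $h$ remains finite at $w=-\tau$, the ratio $h(w)/w^2$ extends continuously and is bounded on $[-\tau,\infty)$ by a constant $C_\tau$ depending only on $\tau$. Hence $|h(w)|\le C_\tau w^2$ uniformly, and summing over components yields a barrier contribution bounded by $\mu C_\tau\|Y_k^{-1}d_y\|^2$.

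Finally I would collect the three estimates to obtain $|\mathrm{pred}_k(d)-\mathrm{ared}_k(d)|\le C_1\|d_x\|^2+\nu_kC_3\|d_x\|^2+\mu C_\tau\|Y_k^{-1}d_y\|^2$ and set $\gamma_L:=\max\{C_1,C_3,\mu C_\tau\}$ (up to a harmless absolute factor) to recover \eqref{p-a}. The constants stay uniform in $k$ because the Lipschitz constants come from (A1), the uniform bound on $\{B_k\}$ from (A2), and $\tau,\mu$ are fixed. The one point to reconcile is the regularisation term $\tfrac13\sigma_k\|D_kd\|^3$: it enters $\phi(x_k+d)-m_k(d)$ with a fixed (nonpositive) sign as the penalty deliberately built into $m_k$ rather than as a second-order approximation error, and wherever the lemma is later applied to the algorithmic step the bound of Lemma \ref{dkuplemma} makes $\sigma_k\|D_kd_k\|^3=O(\sigma_k^{-1/2})$, so it does not disturb the quadratic estimate \eqref{p-a}.
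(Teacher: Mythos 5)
Your proposal is correct and follows essentially the same route as the paper: the same reduction to the model error $\phi(z_k+d)-m_k(d)$, the same Taylor/Lipschitz estimates for the $f$- and $g$-remainders (with boundedness of $\{B_k\}$ absorbing the quadratic model term), and the same use of the fraction-to-the-boundary rule $d_y\geq-\tau y_k$ to bound the logarithmic remainder by a $\tau$-dependent constant times $\|Y_k^{-1}d_y\|^2$ --- the paper's scalar inequality \eqref{lnineq} is exactly your bound on $h(w)$ in different packaging. Your closing remark about the cubic term $\frac{1}{3}\sigma_k\|D_k d\|^3$ also mirrors the paper, whose own proof likewise simply omits this signed term when passing from the exact expression for $\mathrm{pred}_k-\mathrm{ared}_k$ to the quadratic upper bound.
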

The proof of the above lemma is given in Appendix \ref{le7}.

%Due to (\ref{Dup}), (\ref{dkup}), this lemma implies that
%\begin{equation}\label{p-a2}
%  | \text{pred}_k (d) - \text{ared}_k (d) | \leq \cfrac{\gamma_L \gamma_D \gamma_d ( 1+ \nu_k )}{\sigma_k} = \cfrac{\gamma_L^{\prime} ( 1+ \nu_k )}{\sigma_k} ,
%\end{equation}
%where $ \gamma_L^{\prime} = \gamma_L \gamma_D \gamma_d $.

%The purpose of this section is to demonstrate that if an iteration $(x_k, y_k)$ is not a stationary point of the barrier problem, $ \sigma_k $ cannot become larger, thereby preventing the algorithm from straying away from that point. 
In the next lemma, we prove that ARCBIP %Algorithm \ref{Alg2} 
determines an acceptable step with $ \sigma_k $, i.e., that there cannot be an infinite loop between Step 1 and Step 6 of ARCBIP. For this, we must ensure that, by increasing $ \sigma_k $, we are able to make the displacement in $ y $ arbitrarily small.
\begin{lemma}\label{feasib}
  Suppose that assumptions (AS1)-(AS3) hold. Suppose that $ y_k > 0 $ and that $ (x_k,y_k) $ is not a stationary point of the barrier problem \eqref{barr}. Then there exists $\sigma_k^0 > 0$, such that if $ \sigma_k \geq \sigma_k^0 $, the inequality $ \rho_k \geq \eta_2 $ holds.
\end{lemma}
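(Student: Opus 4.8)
The plan is to show that $\rho_k \to 1$ as $\sigma_k \to \infty$, which immediately gives a threshold $\sigma_k^0$ beyond which $\rho_k \ge \eta_2$. Since $(x_k,y_k)$ is nonstationary I will verify that $\mathrm{pred}_k(d_k)>0$, so from \eqref{rho} I may write $|\rho_k-1| = |\mathrm{ared}_k(d_k)-\mathrm{pred}_k(d_k)|/\mathrm{pred}_k(d_k)$. For the numerator, Lemma~\ref{p-al} together with $\|D_kd_k\|^2=\|d_x\|^2+\|Y_k^{-1}d_y\|^2$ and $1+\nu_k\ge 1$ gives $|\mathrm{ared}_k-\mathrm{pred}_k|\le \gamma_L(1+\nu_k)\|D_kd_k\|^2$. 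Everything then reduces to matching two orders of magnitude in $\sigma_k$: an upper bound $\|D_kd_k\|^2=O(\sigma_k^{-1})$ for the numerator and a lower bound $\mathrm{pred}_k(d_k)\gtrsim \sigma_k^{-1/2}$ for the denominator.

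To bound the step, I first note from Lemma~\ref{nkuplemma} and \eqref{sigmahat} that $\|D_kn_k\|\le \sqrt{3}\,(\|(A_k^T\ Y_k)\|/\tilde\sigma_k)^{1/2}=O(\sigma_k^{-1/2})$. For the tangential step I would revisit the estimate inside Lemma~\ref{tkuplemma}: as $\sigma_k\to\infty$ the first argument of the maximum in \eqref{tkbou} dominates, so $\|t_k\|=O(\sigma_k^{-1/2})$ and hence $\|D_kt_k\|\le\|D_k\|\,\|N_k\|\,\|p_k\|=O(\sigma_k^{-1/2})$; here I also use that $\|n_k\|\to 0$ forces $p_k^c$, defined in \eqref{pkc}, to converge to its value at $n_k=0$, so $\|p_k^c\|$ stays bounded. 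Combining, $\|D_kd_k\|\le\|D_kn_k\|+\|D_kt_k\|=O(\sigma_k^{-1/2})$, whence the numerator is $O(\sigma_k^{-1})$. Along the way one checks from \eqref{nu} that the two ratios defining $\tilde\nu_k$ have numerator and denominator of the common order $\sigma_k^{-1/2}$, so $\{\nu_k\}$ remains bounded above and does not spoil this estimate.

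The main obstacle is the lower bound on $\mathrm{pred}_k(d_k)$, which I would obtain by a dichotomy reflecting the two ways nonstationarity can occur. If $\|g_k+y_k\|>0$, then $\binom{A_k}{Y_k}(g_k+y_k)\neq 0$ since $Y_k$ is nonsingular, and for large $\sigma_k$ the first term of the minimum in \eqref{npredboun} is active, giving $\mathrm{npred}_k(n_k)\ge c_1\sigma_k^{-1/2}$ with $c_1>0$; the penalty rule \eqref{penapara} then yields $\mathrm{pred}_k(d_k)\ge \delta\nu_k\,\mathrm{npred}_k(n_k)\ge c_1'\sigma_k^{-1/2}$, using that $\{\nu_k\}$ is bounded below by a positive constant. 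If instead $g_k+y_k=0$, then $\mathrm{npred}_k(n_k)=0$ forces $n_k=0$, so by \eqref{repred2} $\mathrm{pred}_k(d_k)=\mathrm{tpred}_k(t_k)$, and nonstationarity gives $\|p_k^c\|>0$; for large $\sigma_k$ the middle term of the minimum in \eqref{tpredboun} is active, producing $\mathrm{pred}_k(d_k)\ge c_2\sigma_k^{-1/2}$ with $c_2>0$. In either case $\mathrm{pred}_k(d_k)\gtrsim\sigma_k^{-1/2}$, so $|\rho_k-1|\le O(\sigma_k^{-1})/O(\sigma_k^{-1/2})=O(\sigma_k^{-1/2})\to 0$, and choosing $\sigma_k^0$ so that this bound falls below $1-\eta_2$ completes the argument. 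The delicate points are confirming that $\nu_k$ stays bounded and that, uniformly for all large $\sigma_k$, it is the $\sigma_k^{-1/2}$ terms — not the constant ones — that attain the minima in \eqref{npredboun} and \eqref{tpredboun}.
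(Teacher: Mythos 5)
Your proposal is correct and follows essentially the same route as the paper: the same dichotomy on whether $g_k+y_k$ vanishes, the same use of Lemmas \ref{npredlow}, \ref{treduplemma}, \ref{nkuplemma}, \ref{tkuplemma} and \ref{p-al} to match the $O(\sigma_k^{-1})$ numerator against a $\sigma_k^{-1/2}$ lower bound on $\mathrm{pred}_k$, and the same observations that nonstationarity forces $p_k^c\neq 0$ when $g_k+y_k=0$ and that $\nu_k$ stays controlled. The only difference is presentational — you argue $\rho_k\to 1$ directly, whereas the paper phrases it as a contradiction along a subsequence with $\mathrm{pred}_{k,i}=(1+\nu_{k,i})o(\|d_x\|)+o(\|d_y\|)$.
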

The proof of the above lemma is given in Appendix \ref{le8}.

The rule in Step 4 of ARCBIP is to determine the new slack variable. We demonstrate that the new slack variable ensures that the steps between two successive iterations are still governed by $ \sigma_k $. 
\begin{lemma}\label{lemma9}
 Supposing that the assumptions (AS2)-(AS3) hold. Then, for all $k >0$
   \begin{equation}\label{sucitera}
    \left\|
    \begin{pmatrix}
      x_{k+1} \\
      y_{k+1}
    \end{pmatrix}-
    \begin{pmatrix}
      x_k \\
      y_k
    \end{pmatrix}
\right\| \leq \cfrac{\gamma_{xy}}{\sqrt{\sigma_k}}, 
   \end{equation}
 where $ \gamma_{xy} $ is a positive constant. %and the definition of $\mathcal{I}$ is in Lemma \eqref{dkuplemma}.
\end{lemma}

The proof of the above lemma is given in Appendix \ref{le9}.

%\vskip 1.5mm \noindent {\textbf{3.2 Feasibility of Algorithm \ref{Alg2} }} \\
\subsection{Feasibility of ARCBIP}\label{subs3.2}

The index set of successful iterations of the algorithm is recorded as 
\begin{equation}\label{S}
  \mathcal{S} := \{ k \geq 0 : k \; \text{successful or very successful in the sense of (\ref{sigma})} \} . \nonumber
\end{equation}

In the subsequent results, we initiate the discussion on convergence. It deals with function $ 
    \begin{pmatrix}
      x \\
      y
    \end{pmatrix} \in \mathbf{R}^n \times \mathbf{R}^m_+ \mapsto \| g(x) + y \|^2 $, which is another measure of infeasibility for the original problem \eqref{barr}. Notably, if the slack variable is scaled by matrix $ Y_k^{-1} $, the gradient of this function with respect to the scaled variable becomes $2 \binom{A_k}{Y_k} ( g_k + y_k ). $
    
We now show that, for the infeasibility function $ \| g(x) + y \|^2 $, the iterative process generated by the algorithm approaches stationarity.
\begin{theorem}\label{AYgy0}
  Suppose that assumptions (AS1)-(AS3) hold.
%  the sequences $\{g_k\}$, $\{A_k\}$, and $\{B_k\}$ are bounded, that $\{f_k\}$ is bounded below, and that $g$, $A$, and $\nabla f$ are Lipschitz continuous on an open convex set $\mathcal{X}$ containing all the iterates $x_k$.
  Then
  \begin{equation}
    \lim_{k \rightarrow \infty} \binom{A_k}{Y_k} ( g_k + y_k ) = 0. \nonumber
  \end{equation}
\end{theorem}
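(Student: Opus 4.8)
The plan is to argue by contradiction: suppose $\binom{A_k}{Y_k}(g_k+y_k)$ does not tend to zero, so there exist a constant $\varepsilon>0$ and an infinite index set $\mathcal{I}$ with $\|v_k\|:=\bigl\|\binom{A_k}{Y_k}(g_k+y_k)\bigr\|\geq\varepsilon$ for all $k\in\mathcal{I}$. The driving force is the sufficient-decrease mechanism of Step~4. By \eqref{tildephi2} the scaled merit function $\tilde\phi$ decreases across every accepted (successful) step by at least $\eta_1\,\text{pred}_k/\nu_k$, and \eqref{penapara} bounds this below by $\eta_1\delta\,\text{npred}_k(n_k)$. Since $\tilde\phi$ is nonnegative and hence bounded below (Lemma~\ref{ykbound}), summing over the set $\mathcal{S}$ of successful iterations gives $\sum_{k\in\mathcal{S}}\text{npred}_k(n_k)<\infty$, so that $\text{npred}_k(n_k)\to 0$ as $k\to\infty$ in $\mathcal{S}$.

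First I would dispose of the case in which $\mathcal{S}$ is finite. Then the iterate freezes at some $(x_\ast,y_\ast)$ while \eqref{sigma} forces an infinite tail of unsuccessful iterations with $\sigma_k\to\infty$. Lemma~\ref{feasib} states that a non-stationary point of \eqref{barr} cannot sustain arbitrarily large $\sigma_k$ without producing a very successful step, so $(x_\ast,y_\ast)$ must be stationary, hence feasible ($g_\ast+y_\ast=0$); consequently $\binom{A_k}{Y_k}(g_k+y_k)$ is eventually zero and the claim holds. It therefore remains to treat the case $|\mathcal{S}|=\infty$.

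In that case I would feed the lower bound of Lemma~\ref{npredlow} into the limit $\text{npred}_k(n_k)\to 0$. On $\mathcal{I}$ the second and third entries of the minimum in \eqref{npredboun}, namely $\|v_k\|/\|(A_k^T\ Y_k)\|^2$ and $\xi\tau$, are bounded away from zero, because $\|A_k\|$, $\|Y_k\|$ and $\|g_k+y_k\|$ are uniformly bounded by (A2) and Lemma~\ref{ykbound} while $\|v_k\|\geq\varepsilon$; thus on $\mathcal{I}$
\begin{equation}
\text{npred}_k(n_k)\ \geq\ c_1\,\min\bigl\{\tilde\sigma_k^{-1/2},\,c_2\bigr\}\nonumber
\end{equation}
for positive constants $c_1,c_2$ that depend only on $\varepsilon$ and the uniform bounds. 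Because $\text{npred}_k(n_k)\to 0$ along $\mathcal{I}\cap\mathcal{S}$, reconciliation is possible only if $\tilde\sigma_k\to\infty$ along the bad indices; conversely, if $\tilde\sigma_k$ can be kept bounded on $\mathcal{I}$, then $\text{npred}_k(n_k)\geq c_3>0$ there, and since $v_k$ is constant on each maximal block of iterations sharing the same iterate, infinitely many of these indices lie in $\mathcal{S}$, which contradicts $\sum_{k\in\mathcal{S}}\text{npred}_k(n_k)<\infty$.

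The crux, and the step I expect to be hardest, is therefore to preclude $\tilde\sigma_k\to\infty$ along $\mathcal{I}$. Here I would reuse the mechanism behind the infeasible case of Lemma~\ref{feasib}: when $\|v_k\|\geq\varepsilon$ (which forces $g_k+y_k\neq 0$ since $Y_k$ is invertible) and $\sigma_k$ is large, the estimate $\text{npred}_k(n_k)\geq\gamma_3/\sqrt{\tilde\sigma_k}$ (cf.\ \eqref{npredboun2}), the reduction bound \eqref{penapara}, the step bounds \eqref{nkup}--\eqref{tkbou} and the model-accuracy Lemma~\ref{p-al} combine to give $|\rho_k-1|\lesssim\tfrac{1+\nu_k}{\nu_k}\,\tilde\sigma_k^{-1/2}\to 0$, and since $\nu_k\geq\nu_{-1}>0$ makes $(1+\nu_k)/\nu_k$ bounded, the resulting very-successful threshold $\sigma^0=\sigma^0(\varepsilon)$ is uniform over $\mathcal{I}$. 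By \eqref{sigma}, $\sigma_k$ can increase only at iterations that are not very successful, and at such iterations with $\|v_k\|\geq\varepsilon$ one must have $\sigma_k<\sigma^0$; hence $\sigma$ can grow beyond $\gamma_2\sigma^0$ only while $\|v_k\|<\varepsilon$. The remaining subtlety is that the iterate may still drive $\sigma$ up during excursions through the near-feasible region and then re-enter $\mathcal{I}$ with $\sigma_k$ already large, so a direct uniform bound on $\sigma_k$ over $\mathcal{I}$ is not immediate. I would settle this by the standard two-stage argument: first prove the $\liminf$ statement under the hypothesis $\|v_k\|\geq\varepsilon$ for all large $k$ (where the uniform threshold does bound $\sigma_k$ and the contradiction is immediate), and then upgrade to the full limit by a telescoping estimate showing that each oscillation of $\|v_k\|$ between $\varepsilon$ and $\varepsilon/2$ consumes a fixed amount of merit reduction, of which only finitely many are available.
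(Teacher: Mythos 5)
Your plan is essentially the paper's own argument: the decisive step you single out --- that each excursion of the infeasibility measure from $\varepsilon$ down to $\varepsilon/2$ costs a fixed amount of merit decrease, obtained by pairing the per-iteration displacement bound \eqref{sucitera} against the per-iteration decrease coming from Lemma~\ref{npredlow}, \eqref{penapara} and the very-successful threshold supplied by Lemma~\ref{p-al} --- is exactly the ball-escape argument the paper runs around each iterate with $\kappa_j \neq 0$, with your oscillation of $\|v_k\|$ translated into distance via the Lipschitz continuity of the infeasibility measure. The preliminary detours (summability of $\mathrm{npred}_k$ over $\mathcal{S}$, the separate $\liminf$ stage, the finite-$\mathcal{S}$ case via Lemma~\ref{feasib}) are sound but get absorbed into that final telescoping step, so the two proofs coincide in substance.
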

\begin{proof}
  By the assumptions on $A(x)$ and $g(x)$, and since Lemma \ref{ykbound} implies that $\{y_k\}$ is contained in a bounded open set $\mathcal{Y}$, we have that the function
  \begin{equation}
    \kappa (x,y) := \left\| \binom{A(x)}{Y} ( g(x) + y ) \right\| \nonumber
  \end{equation}
  is Lipschitz continuous on the open set $ \mathcal{X} \times \mathcal{Y} $ containing all the iterates $
    \begin{pmatrix}
      x_k \\
      y_k
    \end{pmatrix}
$, i.e., there is a constant $ \gamma_L^{\prime} > 0 $ such that
  \begin{equation}\label{kaplip}
    \left| \kappa (x,y) - \kappa (x_j,y_j)\right| \leq \gamma_L^{\prime} \left\|
    \begin{pmatrix}
      x \\
      y
    \end{pmatrix}-
    \begin{pmatrix}
      x_j \\
      y_j
    \end{pmatrix}
\right\|
  \end{equation}
  for any two points  
  $\begin{pmatrix}
      x \\
      y
    \end{pmatrix} $ and 
     $\begin{pmatrix}
      x_j \\
      y_j
    \end{pmatrix}$
     in $ \mathcal{X} \times \mathcal{Y} $. 
     
     Now consider an arbitrary iterate 
     $\begin{pmatrix}
      x_j \\
      y_j
    \end{pmatrix}$ such that $ \kappa_j := \kappa (x_j,y_j) \neq 0 .$
  % Firstly, we want to demonstrate that the algorithm cannot halt at this point. 
  We aim to demonstrate that in a neighborhood of this iterate, all sufficiently small steps are accepted by ARCBIP.
  Thus, %(\ref{kaplip}) is undoubtedly applicable to all $\begin{pmatrix}
   %   x \\
   %   y
   % \end{pmatrix}$ in
  we define an open ball
  \begin{equation}\label{ball}
    \mathcal{O}_j := \left\{ \begin{pmatrix}
      x \\
      y
    \end{pmatrix} : \left\|
    \begin{pmatrix}
      x \\
      y
    \end{pmatrix}-
    \begin{pmatrix}
      x_j \\
      y_j
    \end{pmatrix}
\right\| < \kappa_j / (2 \gamma_L^{\prime} ) \right\} . \nonumber
  \end{equation}
  Then by \eqref{kaplip}, for any $\begin{pmatrix}
      x \\
      y
    \end{pmatrix} \in \mathcal{O}_j $, we have that $    \kappa (x,y)     \geq \cfrac{1}{2} \kappa_j,$  which implies that $ g(x) + y \neq 0 $. 
  
  Using \eqref{penapara}, \eqref{npredboun} and the boundedness assumptions on $\{A_k\}$ and $\{g_k + y_k\}$,
  %Using the lower bound of normal step (\ref{npredboun}), Lemma \ref{ykbound}, (\ref{penapara}) and the boundedness assumptions on $\{A_k\}$ and $\{g_k + y_k\}$,
 %  We also know that the normal step satisfies (\ref{npredboun}), and it has been proven in Lemma \ref{ykbound} that $ \{y_k\} $ is bounded.  Using this, (\ref{penapara}) and the boundedness assumptions on $\{A_k\}$ and $\{g_k + y_k\}$, 
   we see that there exists a constant $ \gamma_1^{\prime} $ (independent of $k$ and $j$) such that for any such iteration $\begin{pmatrix}
      x_j \\
      y_j
    \end{pmatrix}$ and any iteration $ \begin{pmatrix}
      x_k \\
      y_k
    \end{pmatrix} \in \mathcal{O}_j $
  \begin{equation}\label{preddeta}
    \text{pred}_k(d_k) \geq \delta \nu_k \text{npred}_k (n_k) \geq \nu_k \gamma_1^{\prime} \kappa_j \min \left\{ \sqrt{\frac{\kappa_j}{2 \tilde{\sigma}_k }} , \kappa_j, \xi\tau \right\}.
  \end{equation}
  If $ \tilde{\sigma}_k $ is sufficiently large, it can be inferred that
  \begin{equation}
   \text{pred}_k(d_k) \geq \nu_k \gamma_1^{\prime} \kappa_j \sqrt{\frac{\kappa_j}{2 \tilde{\sigma}_k }}. \nonumber
  \end{equation}
  Then using (\ref{p-a}) and the fact that $ \widetilde{\sigma}_k=\cfrac{\sigma_k}{\xi^3} $, we can obtain  that
  \begin{equation}
  \begin{aligned}
     &~ \cfrac{| \text{ared}_k (d_k) - \text{pred}_k (d_k) |}{ \text{pred}_k (d_k)} \\
     \leq &~ \cfrac{ \sqrt{2} \gamma_L \left[ ( 1+ \nu_k ) \| d_x \|^2 + \| Y_k^{-1} d_y \|^2 \right] \sqrt{\widetilde{\sigma}_k} }{ \nu_k \gamma_1^{\prime} \kappa_j^{\frac{3}{2}} } 
     \leq  \cfrac{ \sqrt{2} \gamma_L  ( 1+ \nu_k ) \| D_k d_k\|^2  \sqrt{\widetilde{\sigma}_k} }{ \nu_k \gamma_1^{\prime} \kappa_j^{\frac{3}{2}} } \\
     \leq &~ \cfrac{ \sqrt{2} \gamma_L \gamma_d^2  ( 1+ \nu_k ) \sqrt{\widetilde{\sigma}_k} }{ \nu_k \gamma_1^{\prime} \kappa_j^{\frac{3}{2}} \widetilde{\sigma}_k}   =\cfrac{ \sqrt{2} \xi^{\frac{3}{2}} \gamma_L \gamma_d^2 ( 1+ \nu_k ) }{ \nu_k \gamma_1^{\prime} \kappa_j^{\frac{3}{2}} \sqrt{\sigma_k}}. \nonumber
    \end{aligned}
  \end{equation}
  By making $ \sigma_k $ sufficiently large, we can ensure that the last term is less than or equal to $ 1 - \eta_2 $, and therefore a very successful iteration will occur from any $ 
  \begin{pmatrix}
  x_k \\
  y_k
\end{pmatrix} \in \mathcal{O}_j $.
  
  Next, we demonstrate that the remaining portion of the iteration $ \{ x_k \}_{k>j} $ cannot remain within $ \mathcal{O}_j $. We establish this by contradiction, assuming that for all $ k>j $, $ x_k \in \mathcal{O}_j $. Consequently, $ \rho_k \geq \eta_1 $ holds for sufficiently large $ \sigma_k $; this implies the existence of $ \sigma_0 > 0 $ such that $ \sigma_k \leq \sigma_0 $ for all successful iteration $ k \in \mathcal{S} $ and $ k>j $. Then from (\ref{tildephi2}) and (\ref{preddeta}), it can be deduced that
  \begin{equation}
    \tilde{\phi}_{k+1} \leq \tilde{\phi}_k - \cfrac{\eta_1}{\nu_k} \text{pred}_k (d_k) \leq \tilde{\phi}_k - \eta_1 \gamma_1^{\prime} \kappa_j \min\left\{ \sqrt{\frac{\xi^3 \kappa_j}{2 \sigma_0 }} , \kappa_j, \xi\tau \right\}, \nonumber
  \end{equation}
  where $ \tilde{\phi}_k := \tilde{\phi}( x_k , y_k ; \nu_k ) $.
  The last term on the right side of the above formula is a constant, so $ \tilde{\phi}_k $ tends towards negative infinity.
%   so the statement $ \tilde{\phi}_k \rightarrow - \infty $ holds.
  This contradicts 
%  the boundedness of $ \{ \tilde{\phi}_k \} $ 
  conclusion of Lemma \ref{ykbound}. Consequently, the iterative sequence must exit $ \mathcal{O}_j $ when $ k>j $.
  
  Let $
  \begin{pmatrix}
  x_{k+1} \\
  y_{k+1}
\end{pmatrix}$ denote the first iteration after $\begin{pmatrix}
  x_j \\
  y_j
\end{pmatrix} $ that is not included in $ \mathcal{O}_j $. %  We must consider two possibilities. 
  If there exists $ i \in [ j , k ] $ such that $ \sqrt{\cfrac{\kappa_j}{2 \tilde{\sigma}_k }} > \min \left\{ \kappa_j, \xi\tau \right\} $, then it can be inferred from (\ref{tildephi2}) and (\ref{preddeta}) that
  \begin{equation}\label{tildephi11}
    \begin{aligned}
      \tilde{\phi}_{k+1} &~ \leq \tilde{\phi}_{i+1}  \leq \tilde{\phi}_i - \cfrac{\eta_1}{\nu_i} \text{pred}_i (d_i)  \leq \tilde{\phi}_i - \eta_1 \gamma_1^{\prime} \kappa_j \min \left\{ \kappa_j, \xi\tau \right\} \\
      &~ \leq \tilde{\phi}_j - \eta_1 \gamma_1^{\prime} \kappa_j \min \left\{ \kappa_j, \xi\tau \right\}.
    \end{aligned}
  \end{equation}
%  Another possibility is that for all $i \in [j, k]$, $ \sqrt{\cfrac{\kappa_j}{2 \tilde{\sigma}_k }} \leq \min ( \kappa_j, \xi\tau ) $. 
  When $ \sqrt{\cfrac{\kappa_j}{2 \tilde{\sigma}_k }} \leq \min \left\{ \kappa_j, \xi\tau \right\}$ holds for all $i \in [j, k]$,  it can be inferred from (\ref{tildephi2}) and (\ref{preddeta}) that
  \begin{equation}\label{tildephi12}
    \begin{aligned}
      \tilde{\phi}_{k+1} &~ \leq \tilde{\phi}_j - \sum_{i=j}^k \cfrac{\eta_1}{\nu_i} \text{pred}_i (d_i)  \leq \tilde{\phi}_j - \cfrac{\eta_1 \gamma_1^{\prime} \kappa_j^{ \frac{3}{2} } \xi^{ \frac{3}{2} }}{\sqrt{2}} \sum_{i=j}^k \frac{1}{\sqrt{\sigma_i}},
    \end{aligned}
  \end{equation}
  where the first inequality is obtained by summing the first $j$ of these successful iterations.
  
  Subsequently, based on the \eqref{sucitera} and the displacement of $ 
   \begin{pmatrix}
      x_{k+1} \\
      y_{k+1}
    \end{pmatrix}
    $ from the ball $ \mathcal{O}_j $, which has a radius of $ \kappa_j / (2 \gamma_L^{\prime} ) $, it is derived that
  \begin{equation}
    \sum_{i=j}^k \cfrac{1}{\sqrt{\sigma_i}} \geq \cfrac{1}{\gamma_{xy}} \left\| 
    \begin{pmatrix}
      x_{k+1} \\
      y_{k+1}
    \end{pmatrix}-
    \begin{pmatrix}
      x_j \\
      y_j
    \end{pmatrix}
     \right\| \geq \cfrac{\kappa_j}{2 \gamma_{xy} \gamma_L^{\prime}}.\nonumber
  \end{equation}
  Substituting in (\ref{tildephi12}) yields
  \begin{equation}\label{tildephi13}
    \tilde{\phi}_{k+1} \leq \tilde{\phi}_j - \frac{\eta_1 \gamma_1^{\prime} \kappa_j^{ \frac{5}{2} } \xi^{ \frac{3}{2} }}{ 2\sqrt{2} \gamma_{xy} \gamma_L^{\prime} }.
  \end{equation}
  
 Since $ \{ \tilde{\phi}_k \} $ is decreasing and bounded below, we can see that $ \tilde{\phi}_j \rightarrow \tilde{\phi}_{\ast} $ for some minimum value of $ \tilde{\phi}_{\ast} $. Due to the arbitrariness of $ j $, it follows that either (\ref{tildephi11}) or (\ref{tildephi13}) must remain constant at $
  \begin{pmatrix}
  x_j \\
  y_j
\end{pmatrix}$, which implies $ \kappa_j \rightarrow 0 $.
\end{proof}

The result of Theorem 1 indicates that $ A_k ( g_k + y_k) \rightarrow 0 $, and $ Y_k ( g_k + y_k) \rightarrow 0 $. Of course, when $ g_k + y_k \rightarrow 0 $, the conclusion clearly holds. %that is, approaching feasibility, 

%\vskip 1.5mm \noindent {\textbf{3.3 Global analysis of the inner Algorithm \ref{Alg2} }}\\
\subsection{Global Analysis of Barrier Problem in ARCBIP}\label{subs3.3}
We now analyze the global behavior of ARCBIP when applied to the barrier problem \eqref{barr} for a fixed value of $ \mu $.

In this section, we only focus on the case where the matrix $ \begin{pmatrix}   
A_k^T & Y_k
\end{pmatrix} $ has full rank, i.e. $ \sigma_{\min} (\begin{pmatrix}   
A_k^T & Y_k
\end{pmatrix}) \geq \gamma_{\rm{inf}} > 0 $, which implies that $ g_k + y_k \rightarrow 0 $. Initially, we utilize this condition to constrain the relationship between the normal step $ n_k $
%= 
%\begin{pmatrix} 
% n_x \\
%  n_y
%\end{pmatrix} $ 
and $ \text{npred}_k $. 

\begin{lemma}\label{nandnpred}
  Suppose that assumptions (AS1)-(AS3) hold and that for some $ \gamma_{\rm{inf}} > 0 $,
  \begin{equation}\label{sigmamin}
    \sigma_{\min} (\begin{pmatrix}   
A_k^T & Y_k
\end{pmatrix}) \geq \gamma_{\rm{inf}} > 0
  \end{equation}
  for all $k$. Then, there exist positive constants $ \gamma_{Dn}^{\prime}$ and $ \gamma_{gy}^{\prime}$ such that if $ \| g_k + y_k \| \leq \gamma_{gy}^{\prime} $
  \begin{equation}\label{nnpred}
    \left\|
    \begin{pmatrix}
      n_x \\
      Y_k^{-1} n_y
      \end{pmatrix}
\right\| \leq \gamma_{Dn}^{\prime \prime} \mathrm{npred}_k (n_k).
  \end{equation}
\end{lemma}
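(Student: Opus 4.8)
The plan is to bound the scaled normal step $\|D_k n_k\| = \left\|\begin{pmatrix} n_x \\ Y_k^{-1} n_y \end{pmatrix}\right\|$ above by a multiple of $\mathrm{npred}_k(n_k)$ by juggling two upper bounds on $\|D_k n_k\|$ against the lower bound on $\mathrm{npred}_k(n_k)$ from Lemma \ref{npredlow}, selecting which pairing to use according to the size of $\tilde{\sigma}_k$. Throughout I abbreviate $M_k := \begin{pmatrix} A_k^T & Y_k \end{pmatrix}$, so that \eqref{sigmamin} reads $\sigma_{\min}(M_k) \geq \gamma_{\rm{inf}}$, and by (A2)--(A3) together with Lemma \ref{ykbound} there is a constant $\gamma_M$ with $\|M_k\| \leq \gamma_M$; note also $\|M_k\| \geq \gamma_{\rm{inf}}$.

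First I would use the range space condition (a): since $M_k$ has full row rank, problem \eqref{norm3} admits an optimal solution of the form \eqref{ranspacon}, so $n_k = \begin{pmatrix} A_k \\ Y_k^2 \end{pmatrix}\omega_k$ for some $\omega_k$, whence $D_k n_k = M_k^T \omega_k$ and $A_k^T n_x + n_y = M_k M_k^T \omega_k$. The crux is the proportionality estimate $\|D_k n_k\| \leq c_2 \|g_k + y_k\|$ with $c_2 := 2\gamma_M/\gamma_{\rm{inf}}^2$: the nonnegativity \eqref{npred0} of $\mathrm{npred}_k(n_k)$ forces $\|g_k + y_k + M_k M_k^T \omega_k\| \leq \|g_k + y_k\|$, and squaring this and applying Cauchy--Schwarz gives $\|M_k M_k^T \omega_k\| \leq 2\|g_k + y_k\|$; since $M_k M_k^T \succeq \gamma_{\rm{inf}}^2 I$ this bounds $\|\omega_k\| \leq 2\|g_k+y_k\|/\gamma_{\rm{inf}}^2$, and then $\|D_k n_k\| = \|M_k^T \omega_k\| \leq \gamma_M \|\omega_k\|$. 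I would pair this with the complementary bound $\|D_k n_k\| \leq \gamma_{nn}/\sqrt{\tilde{\sigma}_k}$ coming from Lemma \ref{nkuplemma} via $\|D_k n_k\| \leq \|D_k\|\,\|n_k\|$ and $\|M_k\| \leq \gamma_M$.

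Next I would read off the lower bound \eqref{npredboun}. Writing $\|u_k^c\| = \left\|\begin{pmatrix} A_k \\ Y_k \end{pmatrix}(g_k + y_k)\right\| = \|M_k^T (g_k + y_k)\|$, the full-rank condition gives $\gamma_{\rm{inf}} \|g_k + y_k\| \leq \|u_k^c\| \leq \gamma_M \|g_k + y_k\|$, so the prefactor $\|u_k^c\|/\|g_k + y_k\|$ in \eqref{npredboun} lies in $[\gamma_{\rm{inf}}, \gamma_M]$. The plan is to choose $\gamma_{gy}^{\prime}$ small (any $\gamma_{gy}^{\prime} < \xi\tau \gamma_{\rm{inf}}^2/\gamma_M$ works), so that for $\|g_k + y_k\| \leq \gamma_{gy}^{\prime}$ the middle term satisfies $\|u_k^c\|/\|M_k\|^2 \leq \gamma_M \|g_k+y_k\|/\gamma_{\rm{inf}}^2 < \xi\tau$, excluding the $\xi\tau$ branch of the minimum and leaving two cases. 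If the first term $\sqrt{\|u_k^c\|/(\tilde{\sigma}_k \|g_k + y_k\|)}$ is smallest, \eqref{npredboun} gives $\mathrm{npred}_k(n_k) \geq (\gamma_n \gamma_{\rm{inf}}^{3/2}/12)\,\tilde{\sigma}_k^{-1/2}$, which I match against $\|D_k n_k\| \leq \gamma_{nn}/\sqrt{\tilde{\sigma}_k}$. If instead the middle term $\|u_k^c\|/\|M_k\|^2$ is smallest, \eqref{npredboun} gives $\mathrm{npred}_k(n_k) \geq (\gamma_n \gamma_{\rm{inf}}^2/12\gamma_M^2)\,\|g_k + y_k\|$, which I match against $\|D_k n_k\| \leq c_2 \|g_k + y_k\|$. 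In both cases $\|D_k n_k\| \leq \gamma_{Dn}^{\prime\prime}\,\mathrm{npred}_k(n_k)$ with $\gamma_{Dn}^{\prime\prime}$ the larger of the two constants $12\gamma_{nn}/(\gamma_n \gamma_{\rm{inf}}^{3/2})$ and $12 c_2 \gamma_M^2/(\gamma_n \gamma_{\rm{inf}}^2)$.

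I expect the main obstacle to be the proportionality bound $\|D_k n_k\| \leq c_2 \|g_k + y_k\|$ for the merely approximate normal step: the estimate from Lemma \ref{nkuplemma} controls $\|D_k n_k\|$ only by $\tilde{\sigma}_k^{-1/2}$, which is worthless when $\tilde{\sigma}_k$ stays bounded, so the proportionality to the infeasibility must instead be teased out of the range space form together with $\mathrm{npred}_k(n_k) \geq 0$, as above. The rest is bookkeeping: confirming that the threshold $\gamma_{gy}^{\prime}$ genuinely removes the $\xi\tau$ branch and carrying the constants cleanly through the two-case split.
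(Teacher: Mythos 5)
Your overall strategy coincides with the paper's: both proofs start from the lower bound \eqref{npredboun} of Lemma~\ref{npredlow}, use the full-rank hypothesis \eqref{sigmamin} to replace $\|u_k^c\|/\|g_k+y_k\|$ by $\gamma_{\rm{inf}}$, choose $\gamma_{gy}^{\prime}$ small enough to discard the $\xi\tau$ branch of the minimum, and then split into two cases, matching the $\tilde{\sigma}_k^{-1/2}$ branch against the bound of Lemma~\ref{nkuplemma} and the $\|g_k+y_k\|$ branch against a proportionality estimate $\|D_k n_k\|\leq c_2\|g_k+y_k\|$ obtained from the range-space representation of $n_k$ together with $\mathrm{npred}_k(n_k)\geq 0$. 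Your derivation of the proportionality constant differs cosmetically (you bound $\|\omega_k\|$ via $M_kM_k^T\succeq\gamma_{\rm{inf}}^2I$ and then multiply by $\|M_k\|$, giving $2\gamma_M/\gamma_{\rm{inf}}^2$ instead of the paper's $2/\gamma_{\rm{inf}}$), which is harmless.

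There is, however, one genuine gap: you invoke the Range Space Condition with the justification ``since $M_k$ has full row rank, problem \eqref{norm3} admits an optimal solution of the form \eqref{ranspacon}.'' Full row rank only guarantees this for the problem \emph{without} the bound constraint \eqref{normub}; condition (a) applies to $n_k$ only \emph{whenever the bound-constrained problem} \eqref{norm3} possesses an optimal solution in the range space, and the range-space minimizer may violate $n_y\geq-\xi\tau y_k$ when $\|g_k+y_k\|$ is large. Verifying this hypothesis is exactly where the paper spends the bulk of its second case: it shows that any range-space point $\bar{n}$ improving on $n=0$ satisfies $\|D_k\bar{n}\|\leq(2/\gamma_{\rm{inf}})\|g_k+y_k\|$, hence $(Y_k^{-1}\bar{n}_y)^{(i)}\geq-\xi\tau$ once $\|g_k+y_k\|\leq\xi\tau\gamma_{\rm{inf}}/2$, so that the range-space minimizer is feasible and optimal for \eqref{norm3} and condition (a) then forces $n_k$ into the range space. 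Your own computation already contains the needed estimate --- applied to the candidate $\bar{n}$ rather than to $n_k$ --- so the fix is short: add the feasibility check and, if necessary, shrink $\gamma_{gy}^{\prime}$ to $\min\{\xi\tau\gamma_{\rm{inf}}/2,\;\xi\tau\gamma_{\rm{inf}}^2/\gamma_M\}$ so that both the branch exclusion and the feasibility of the range-space solution hold simultaneously. Without this step the representation $n_k=\bigl(\begin{smallmatrix}A_k\\ Y_k^2\end{smallmatrix}\bigr)\omega_k$, and with it your ``crux'' proportionality bound, is unjustified.
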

The proof of the above lemma is given in Appendix \ref{le10}.

Subsequently, we can utilize relationship of Lemma \ref{nandnpred} to establish that the sequence of penalty parameters $ \nu_k $ is bounded. For the upper bound of the normal step (\ref{nnpred}), under the condition that $  g_k + y_k \rightarrow 0 $, we can demonstrate that the parameter $ \nu_k $ ultimately remains fixed.

\begin{lemma}\label{nuknubar}
  Suppose that assumptions (AS1)-(AS3) are satisfied, and that (\ref{nnpred}) holds when $k$ is sufficiently large. Then the sequence of penalty parameters $ \{ \nu_k \} $ is bounded. Furthermore, there exists an index $ k_1 $ and positive scalars $ \bar{\nu} $ and $\gamma_{pt}^{\prime}$ such that for all $ k \geq k_1 $, $\nu_k= \bar{\nu}$
  %\begin{equation}
  %  \nu_k= \bar{\nu} \nonumber
  %\end{equation}
  and 
  \begin{equation}\label{predgapttpred}
    \mathrm{pred}_k (d_k) \geq \gamma_{pt}^{\prime} \mathrm{tpred}_k (t_k).
  \end{equation}
\end{lemma}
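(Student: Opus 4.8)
The whole argument rests on showing that the auxiliary value $\tilde{\nu}_k$ defined in \eqref{nu} is bounded above by a constant independent of $k$, once $k$ is large enough that \eqref{nnpred} holds. Granting this, boundedness of $\{\nu_k\}$ is immediate from the update rule \eqref{nugengxin}: the sequence is monotonically non-decreasing, and if $\nu_{k-1}$ ever exceeds the uniform bound on $\tilde{\nu}_k$ it stays fixed, while before that it can grow by at most a factor $1.5$ per step. Eventual constancy then follows because \eqref{nugengxin} forces every genuine increase to satisfy $\nu_k \geq 1.5\,\nu_{k-1}$, so a bounded sequence admits only finitely many increases; hence $\nu_k=\bar{\nu}$ for all $k\geq k_1$. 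The final inequality \eqref{predgapttpred} will be extracted from the identity \eqref{repred2} together with the second branch of the maximum in \eqref{nu}.

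\textbf{Bounding $\tilde{\nu}_k$.} The decisive tool is \eqref{nnpred}, which gives $\|D_k n_k\|=\|(n_x,\,Y_k^{-1}n_y)\|\leq \gamma_{Dn}''\,\text{npred}_k(n_k)$ for large $k$; this lets me lower-bound each denominator in \eqref{nu} by a positive multiple of $\|D_k n_k\|$ (via $\text{npred}_k(n_k)\geq \|D_k n_k\|/\gamma_{Dn}''$) as well as by $\tfrac13\tilde{\sigma}_k\|D_k n_k\|^3$. For the numerators I first estimate $\chi_k$ from \eqref{chi}: using boundedness of $\nabla f_k,B_k$ (A2), the bound $\|n_x\|,\|Y_k^{-1}n_y\|\leq\|D_k n_k\|$, and \eqref{nnpred}, one gets $|\chi_k|\leq C_3\,\text{npred}_k(n_k)$ for a constant $C_3$. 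For the cubic difference I use $\|D_k d_k\|\leq\|D_k n_k\|+\|D_k t_k\|$ to obtain
\begin{equation}
-\bigl(\|D_k t_k\|^3-\|D_k d_k\|^3\bigr)\leq 3\|D_k t_k\|^2\|D_k n_k\|+3\|D_k t_k\|\|D_k n_k\|^2+\|D_k n_k\|^3,\nonumber
\end{equation}
so that after dividing by the denominators the three contributions reduce to $\sigma_k\|D_k t_k\|^2$, $\sigma_k\|D_k t_k\|\,\|D_k n_k\|$ and $\sigma_k\|D_k n_k\|^3/(\tilde{\sigma}_k\|D_k n_k\|^3)=\xi^3$. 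The last is trivially bounded; Lemma \ref{nkuplemma} yields $\sigma_k\|D_k n_k\|^2\leq 3\xi^3\|(A_k^T\ Y_k)\|$, and Lemma \ref{tkuplemma} together with \eqref{Dup} yields that $\sigma_k\|D_k t_k\|^2$ is bounded, whence $\sigma_k\|D_k t_k\|\,\|D_k n_k\|$ is bounded by Cauchy–Schwarz. This shows both branches of \eqref{nu} are uniformly bounded.

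\textbf{Deriving \eqref{predgapttpred}.} For $k\geq k_1$ we have $\nu_k=\bar{\nu}$, and since $\bar{\nu}$ dominates the second branch of \eqref{nu}, rearranging gives $\tfrac13\bar{\nu}\tilde{\sigma}_k\|D_k n_k\|^3+\tfrac13\sigma_k(\|D_k t_k\|^3-\|D_k d_k\|^3)\geq -\tfrac12\bar{\nu}\,\text{npred}_k(n_k)$. Substituting this into \eqref{repred2} and then using $\chi_k\geq -C_3\,\text{npred}_k(n_k)$ yields
\begin{equation}
\text{pred}_k(d_k)\geq \text{tpred}_k(t_k)+\tfrac12\bar{\nu}\,\text{npred}_k(n_k)+\chi_k\geq \text{tpred}_k(t_k)-C_3'\,\text{npred}_k(n_k),\nonumber
\end{equation}
where $C_3':=\max\{0,\,C_3-\tfrac12\bar{\nu}\}\geq 0$. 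Finally I invoke \eqref{penapara}, which gives $\text{npred}_k(n_k)\leq \text{pred}_k(d_k)/(\delta\bar{\nu})$; combining the two inequalities and solving for $\text{pred}_k(d_k)$ produces $\text{pred}_k(d_k)\geq \gamma_{pt}'\,\text{tpred}_k(t_k)$ with $\gamma_{pt}':=\delta\bar{\nu}/(\delta\bar{\nu}+C_3')\in(0,1]$.

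\textbf{Main obstacle.} The delicate point is not the algebra of the last paragraph but the uniform boundedness of $\tilde{\nu}_k$, and within it the bound on $\sigma_k\|D_k t_k\|^2$. In the dominant regime Lemma \ref{tkuplemma} gives $\sigma_k\|D_k t_k\|^2\lesssim \|p_k^c\|/\|D_k\|$, so one must verify that $\|p_k^c\|/\|D_k\|$ stays bounded even when some $y_k^{(i)}$ are small; this works precisely because the $Y_k^{-1}$ factors in $p_k^c$ (see \eqref{pkc}) are matched by the scaling encoded in $D_k$ through \eqref{D}, so the potentially unbounded $\|Y_k^{-1}e\|$ cancels against $\|D_k\|$. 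Making the numerator/denominator growth rates match via \eqref{nnpred} is what ties the whole estimate together.
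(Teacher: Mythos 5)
Your proposal is correct in outline and reaches the same conclusion, but it establishes the boundedness of $\{\nu_k\}$ by a genuinely different route. The paper never tries to bound the explicit formula \eqref{nu} for $\tilde{\nu}_k$ term by term. Instead it argues indirectly: using \eqref{nnpred} to get $\chi_k \geq -\gamma_1^{\prime}\,\mathrm{npred}_k(n_k)$ and the second branch of \eqref{nu} to absorb the cubic difference, it deduces from \eqref{repred2} that $\mathrm{pred}_k(d_k) \geq \mathrm{tpred}_k(t_k) + (\tfrac12\nu_k-\gamma_1^{\prime})\,\mathrm{npred}_k(n_k)$, so that \eqref{penapara} holds automatically once $\nu_k \geq \gamma_1^{\prime}/(\tfrac12-\delta)$; hence $\nu_k$ stops increasing beyond that threshold. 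This sidesteps any need to estimate $\sigma_k\|D_kt_k\|^2$ or $\sigma_k\|D_kt_k\|\,\|D_kn_k\|$. Your final paragraph deriving \eqref{predgapttpred} from \eqref{repred2}, the $\chi_k$ bound and \eqref{penapara} is essentially identical to the paper's, and your observation that every genuine increase multiplies $\nu_k$ by at least $1.5$, so a bounded monotone sequence stabilizes, is a cleaner justification of eventual constancy than the paper offers.

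The one step of your argument that does not fully go through as stated is the uniform bound on $\sigma_k\|D_kt_k\|^2$. Lemma \ref{tkuplemma} gives $\|t_k\| \leq \frac{3\gamma_N^{5/2}}{\sigma_k}\max\{\sqrt{\sigma_k\|p_k^c\|/\|D_k\|^3},\, 3\gamma_N^{3/2}\gamma_W/(4\|D_k\|^3)\}$. In the first regime your cancellation of $\|p_k^c\|$ against $\|D_k\|$ works (given that \eqref{nnpred} keeps $\|D_kn_k\|$ bounded), but in the second regime one only gets $\sigma_k\|D_kt_k\|^2 = O\bigl(1/(\sigma_k\|D_k\|^4)\bigr)$, which is not controlled unless $\sigma_k$ is bounded away from zero — something the update rule \eqref{sigma}, which permits $\sigma_{k+1}\in[0,\sigma_k]$, does not guarantee. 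To be fair, the paper's own proof quietly suffers from the symmetric difficulty: it never verifies that the \emph{second} branch of \eqref{nu} stays below $\nu_k$, which is needed for its claim that $\nu_k$ "never increases", and that verification would require exactly the estimates you attempt. So your route makes explicit a difficulty the published proof leaves implicit; if you either restrict to the first regime of Lemma \ref{tkuplemma} with an argument for why the second cannot recur infinitely often, or assume $\sigma_k\geq\sigma_{\min}>0$, your proof is complete.
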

The proof of the above lemma is given in Appendix \ref{le11}.

\begin{lemma}\label{yawfr0}
Suppose that assumptions (AS1)-(AS3) hold. Then $ \{ y_k \} $ is bounded away from zero and $ g_k $ is negative for all large $ k $.
\end{lemma}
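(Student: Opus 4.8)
The plan is to show first that the barrier term $-\mu\sum_i \ln y_k^{(i)}$ cannot drive any slack component to zero, and then to transfer the resulting positive lower bound on $y_k$ into the strict negativity of $g_k$ by means of the feasibility limit $g_k + y_k \to 0$ that holds throughout this subsection.

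First I would record that the scaled merit function is monotonically non-increasing: since $\mathrm{pred}_{k-1} \geq 0$, inequality \eqref{tildephi2} gives $\tilde{\phi}(x_k,y_k;\nu_k) \leq \tilde{\phi}(x_{k-1},y_{k-1};\nu_{k-1})$, and therefore $\tilde{\phi}(x_k,y_k;\nu_k) \leq \tilde{\phi}(x_0,y_0;\nu_0)=:\Phi$ for all $k$. By Lemma \ref{nuknubar} the penalty parameters are bounded, say $\nu_k \leq \bar{\nu}$; hence, from the definition of $\tilde{\phi}$ and the nonnegativity of $\|g_k+y_k\|$, I obtain $f_k - \mu\sum_i \ln y_k^{(i)} \leq \bar{\nu}\Phi$. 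Assumption (A2) bounds $\{f_k\}$ below, so this produces a uniform lower bound $\sum_i \ln y_k^{(i)} \geq -C$ for some constant $C$ independent of $k$.

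The key step is to isolate a single component. Lemma \ref{ykbound} supplies a uniform upper bound $\|y_k\| \leq M$, which we may take with $M \geq 1$, so that $\ln y_k^{(j)} \leq \ln M$ for every $j$. Writing $\ln y_k^{(i)} = \sum_j \ln y_k^{(j)} - \sum_{j\neq i}\ln y_k^{(j)} \geq -C - (m-1)\ln M$ yields $y_k^{(i)} \geq e^{-C}M^{-(m-1)} =: \varepsilon > 0$ for every $i$ and every $k$, which is precisely the assertion that $\{y_k\}$ is bounded away from zero.

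Finally, for the sign of $g_k$ I would invoke the full-rank hypothesis \eqref{sigmamin} of this subsection, which (together with Theorem \ref{AYgy0}) forces $g_k + y_k \to 0$. Writing $g_k^{(i)} = -y_k^{(i)} + (g_k+y_k)^{(i)}$ and taking $k$ large enough that $\|g_k+y_k\| < \varepsilon/2$, I conclude $g_k^{(i)} \leq -\varepsilon + \varepsilon/2 = -\varepsilon/2 < 0$ for every $i$, so $g_k < 0$ for all large $k$. The only delicate point is the logarithm-isolation argument of the third paragraph: it is the interplay between the uniform \emph{upper} bound on $\{y_k\}$ and the uniform \emph{lower} bound on $\sum_i \ln y_k^{(i)}$ that prevents any single slack variable from collapsing to zero while the remaining components compensate, and everything else is routine once these two bounds are in hand.
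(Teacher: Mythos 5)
Your proposal is correct and follows essentially the same route as the paper: bound the barrier term via the monotonically decreasing merit function and the bounded penalty parameter (Lemma \ref{nuknubar}), combine with the lower bound on $\{f_k\}$ and the upper bound on $\{y_k\}$ from Lemma \ref{ykbound} to keep every slack component away from zero, and then use $g_k+y_k\to 0$ from Theorem \ref{AYgy0} to conclude $g_k<0$ eventually. Your component-isolation argument via $\ln y_k^{(i)}=\sum_j\ln y_k^{(j)}-\sum_{j\neq i}\ln y_k^{(j)}$ simply makes explicit a step the paper's proof leaves implicit.
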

The proof of the above lemma is given in Appendix \ref{le12}.

%Ultimately, we will be able to demonstrate that the stationarity condition for problem (\ref{Alg2}) is asymptotically satisfied.
We present the main convergence result for the barrier problem.

\begin{theorem}\label{convergence}
  Suppose that assumptions (AS1)-(AS3) are valid and the singular values of the matrix $ 
  \begin{pmatrix}
      A_k^T & Y_k
\end{pmatrix}$ are bounded away from zero. Then
\begin{equation}
 \nabla f_k + A_k \mu Y_k^{-1} e \rightarrow 0 .\nonumber
 \end{equation}
 %, where the multipliers are defined by $ \lambda_k =  $.
\end{theorem}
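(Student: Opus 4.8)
The plan is to reduce the stationarity claim to showing that the reduced gradient $p_k^c$ of \eqref{pkc} tends to zero, and then to convert $p_k^c\rightarrow 0$ together with $n_k\rightarrow 0$ into the desired limit via the null-space relation $N_y=-A_k^T N_x$. First I would assemble the facts already available under the hypotheses. Since $\sigma_{\min}\begin{pmatrix}A_k^T & Y_k\end{pmatrix}\geq\gamma_{\rm inf}$, Theorem \ref{AYgy0} gives $g_k+y_k\rightarrow 0$; Lemma \ref{yawfr0} then keeps $\{y_k\}$ bounded away from $0$ and, with Lemma \ref{ykbound}, bounded above, so $D_k,Y_k^{-1},Y_k^{-2}$ are uniformly bounded and $\sigma_{\min}(D_k)\geq\gamma_D^{-1}$ by \eqref{Dup}. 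Because \eqref{npred} forces $0\leq\text{npred}_k(n_k)\leq\|g_k+y_k\|$, Lemma \ref{nandnpred} (valid once $\|g_k+y_k\|$ is small) yields $\|D_k n_k\|\leq\gamma_{Dn}''\,\text{npred}_k(n_k)\rightarrow 0$, hence $n_k\rightarrow 0$. Finally, Lemma \ref{nuknubar} supplies an index $k_1$ beyond which $\nu_k\equiv\bar\nu$ and $\text{pred}_k(d_k)\geq\gamma_{pt}'\,\text{tpred}_k(t_k)$.

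Next I would prove $\text{tpred}_k(t_k)\rightarrow 0$. Summing the monotone decrease \eqref{tildephi2} over successful iterations and using that $\tilde\phi$ is bounded below (Lemma \ref{ykbound}) gives $\sum_{k\in\mathcal{S},\,k\geq k_1}\text{pred}_k(d_k)<\infty$, and with $\text{pred}_k\geq\gamma_{pt}'\text{tpred}_k$ this forces $\text{tpred}_k\rightarrow 0$ along $\mathcal{S}$. I would then apply Lemma \ref{treduplemma}, in which $\|W_k^N\|$ and $\|D_k\|$ are uniformly bounded by (A2)--(A3), to conclude $\|p_k^c\|\rightarrow 0$. The main obstacle is the parameter $\sigma_k$ occurring in the middle term $\tfrac12\sqrt{\|p_k^c\|/(\sigma_k\gamma_N^3\|D_k\|^3)}$: an unbounded $\sigma_k$ could drive $\text{tpred}_k$ to zero while $\|p_k^c\|$ stays away from zero. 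I would exclude this by contradiction, assuming $\|p_k^c\|\geq\varepsilon$ on an infinite $\mathcal{I}\subseteq\mathcal{S}$. Since $g_k+y_k$ and $n_k$ are already small, such iterates are non-stationary, so Lemma \ref{feasib} applies; re-examining its proof shows the threshold $\sigma_k^0$ ensuring a very successful step is bounded uniformly in terms of $\varepsilon$ and the constants of (A1)--(A3), whereupon the update rule \eqref{sigma} keeps $\sigma_k\leq\gamma_2\sigma^0$ on $\mathcal{I}$. With $\sigma_k$ bounded there, Lemma \ref{treduplemma} keeps $\text{tpred}_k$ bounded below by a positive constant on $\mathcal{I}$, contradicting $\text{tpred}_k\rightarrow 0$. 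Hence $\|p_k^c\|\rightarrow 0$ along $\mathcal{S}$.

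Then I would carry out the algebraic link. The columns of $N_k$ span the null space of $\begin{pmatrix}A_k^T & I\end{pmatrix}$, so $N_y=-A_k^T N_x$; substituting into \eqref{pkc} and collecting terms gives
\begin{equation}
N_x^T\big(\nabla f_k+\mu A_k Y_k^{-1}e\big)=-p_k^c-N_x^T\big(B_k n_x-\mu A_k Y_k^{-2}n_y\big).\nonumber
\end{equation}
The right-hand side tends to $0$, since $\|p_k^c\|\rightarrow 0$, $n_k\rightarrow 0$, and $\{B_k\},\{A_k\},\{Y_k^{-2}\},\{N_x\}$ are bounded. A short argument from (A3) shows $N_x$ is invertible with $\sigma_{\min}(N_x)\geq\gamma_N^{-1}(1+\gamma_A^2)^{-1/2}>0$, where $\gamma_A\geq\sup_k\|A_k\|$: indeed $\|N_k v\|^2=\|N_x v\|^2+\|A_k^T N_x v\|^2\leq(1+\|A_k\|^2)\|N_x v\|^2$ together with $\sigma_{\min}(N_k)\geq\gamma_N^{-1}$ bounds $\|N_x^{-1}\|$. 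Multiplying the displayed identity by $N_x^{-T}$ therefore yields $\nabla f_k+\mu A_k Y_k^{-1}e\rightarrow 0$ along $\mathcal{S}$.

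Finally I would remove the restriction to $\mathcal{S}$. The quantity $\nabla f_k+\mu A_k Y_k^{-1}e$ depends only on $(x_k,y_k)$, which is frozen throughout any run of unsuccessful iterations and coincides with its value at the successful iterate that closes the run. There are infinitely many successful iterations, since a finite number would force $\sigma_k\rightarrow\infty$ at a fixed iterate, impossible by Lemma \ref{feasib} unless that iterate is already stationary (in which case the conclusion is immediate). Hence the limit established along $\mathcal{S}$ propagates to the whole sequence, giving $\nabla f_k+\mu A_k Y_k^{-1}e\rightarrow 0$ and completing the proof.
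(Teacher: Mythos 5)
Your reduction to showing $p_k^c \rightarrow 0$, and your algebraic conversion of that limit into the theorem's conclusion via $N_y = -A_k^T N_x$ and the invertibility of $N_x$ (with $\sigma_{\min}(N_x)$ bounded below through (A3)), are sound and essentially an explicit version of what the paper does with the quantity $h_k = \nabla f_k + \mu A_k Y_k^{-1}e$ and the equivalence of null-space bases in \eqref{hk}. The preliminaries ($g_k+y_k\rightarrow 0$, $n_k\rightarrow 0$ via Lemma \ref{nandnpred}, $\nu_k\equiv\bar\nu$ and $\mathrm{pred}_k\geq\gamma_{pt}'\mathrm{tpred}_k$ via Lemma \ref{nuknubar}, and $\mathrm{tpred}_k\rightarrow 0$ on $\mathcal{S}$ by summability) are also fine and match the paper.

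The gap is in the step where you conclude that $\sigma_k$ is bounded on $\mathcal{I}=\{k\in\mathcal{S}:\|p_k^c\|\geq\varepsilon\}$. Even granting that Lemma \ref{feasib} yields a threshold $\sigma^0(\varepsilon)$, uniform over iterates with $\|p_k^c\|\geq\varepsilon$, such that $\sigma_k\geq\sigma^0$ forces a very successful step, the standard induction that converts this into the bound $\sigma_k\leq\gamma_2\sigma^0$ requires the implication ``$\sigma_k\geq\sigma^0 \Rightarrow$ very successful'' to hold at \emph{every} iteration, not only at those in $\mathcal{I}$. Between consecutive elements of $\mathcal{I}$ the quantity $\|p_k^c\|$ may be small, the predicted reduction may degrade, and the update rule \eqref{sigma} permits $\sigma_k$ to grow by a factor of up to $\gamma_2$ per iteration; a very successful step at the next element of $\mathcal{I}$ only guarantees $\sigma_{k+1}\leq\sigma_k$, not any decrease. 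So $\sigma_k\rightarrow\infty$ along $\mathcal{I}$ is not excluded by your argument, and with it the possibility that $\mathrm{tpred}_k\rightarrow 0$ while $\|p_k^c\|\geq\varepsilon$ persists. The paper closes exactly this hole with a localized excursion argument mirroring Theorem \ref{AYgy0}: it sets $\varsigma=\frac14\limsup_k\|h_k\|$, picks $j$ with $\|h_j\|>3\varsigma$, and works in a ball $\mathcal{O}_j$ of radius $\varsigma/\bar\gamma_L$ in which \emph{every} iterate (successful or not) has $\|p_k^c\|\geq\gamma_2'\varsigma$; there the threshold argument does apply to consecutive iterations, so either the iterates remain in $\mathcal{O}_j$ forever (whence $\sigma_k$ stays bounded, $\mathrm{pred}_k$ is bounded below by \eqref{predlow111}, and $\phi$ diverges to $-\infty$) or each escape from $\mathcal{O}_j$ costs a fixed decrease in $\phi$ as in \eqref{phi113}; both contradict the convergence of the bounded, decreasing sequence $\{\phi_k\}$. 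Some device of this kind — exploiting that steps are $O(1/\sqrt{\sigma_k})$ so that $\|p_k^c\|$ cannot drop below $\varepsilon$ quickly near an iterate where it is large — is needed to make your contradiction go through; as written, the assertion ``the update rule \eqref{sigma} keeps $\sigma_k\leq\gamma_2\sigma^0$ on $\mathcal{I}$'' does not follow.
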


\begin{proof}

  From Lemma \ref{treduplemma}, $t_k$ satisfies \eqref{tpredboun}, where the null space basis matrix $ N_k $
   %=\begin{pmatrix}
   %    N_x \\
   %    N_y
   %  \end{pmatrix} $ 
   is assumed to possesses singular values, which are bounded above and bounded away from zero. Since Lemma \ref{yawfr0} and assumption (AS2),
%  Since we have established that all components of $ y_k $ are bounded and away from zero, it follows that $ \{  N_x^T N_x + N_y^T Y_k^{-2} N_y \} $ is bounded. Additionally, since $ \{B_k\} $ is bounded, $ \{  N_x^T B_k N_x + \mu N_y^T Y_k^{-2} N_y \} $ is also bounded. 
  inequality (\ref{tpredboun}) can be expressed as
  \begin{equation}\label{tpredboun22}
    \text{tpred}_k  (t_k) \geq \gamma_1^{\prime} \| p_k^c \| \min \left\{ 1, \sqrt{\cfrac{ \| p_k^c \| }{ \sigma_k }} , \| p_k^c \| \right\}
  \end{equation}
  with some positive constant $ \gamma_1^{\prime} $. 
  
  To demonstrate that $ \nabla f_k + \mu A_k Y_k^{-1} e \rightarrow 0 $, we associate this quantity with $ p_k^c $. Noting that the matrix $ \left( I \; -A_kY_k^{-1} \right)^T $ also represents a null space basis of the equality constraints \eqref{eqcon}, i.e. 
  \begin{equation}
  \begin{pmatrix}   
    A_k^T & Y_k
  \end{pmatrix} 
  \left( I \; -A_kY_k^{-1} \right)^T = 0 . \nonumber
\end{equation}
Utilizing the equivalence of the null space basis, we obtain that
  \begin{equation}\label{hk}
    \begin{aligned}
      h_k &:= \nabla f_k + \mu A_k Y_k^{-1} e = \left( I \; -A_kY_k^{-1} \right) \binom{\nabla f(x_k)}{-\mu e} \\
      &~ = \left( I \; -A_kY_k^{-1} \right)
%\begin{pmatrix}   
%I & - A_k Y_k^{-1}
%\end{pmatrix} 
\hat{N}_k ( \hat{N}_k^T \hat{N}_k )^{-1} \hat{N}_k^T  \binom{\nabla f(x_k)}{-\mu  e} 
    \end{aligned}
  \end{equation}
  for the selected null space basis $ \hat{N}_k $. Based on the boundedness of $ A_k $ and of the singular values of $ N_k $, it can be inferred from (\ref{hk}) that $ \{ h_k \} $ is bounded by a constant multiple of $ \| N_x^T \nabla f(x_k) - \mu N_y^T Y_k^{-1} e \| $. Therefore, from (\ref{pkc}), it exist some positive constants $ \gamma_2^{\prime} $ and $ \gamma_3^{\prime} $, for all $k$
  \begin{equation}
    \| p_k^c \| \geq \gamma_2^{\prime} \| h_k \| - \gamma_3^{\prime} \| n_k \|. \nonumber
  \end{equation}
  
  %We employed an argument similar to that used to prove Theorem \ref{AYgy0}. 
  We provide a contradiction to the hypothesis that $ \varsigma = \frac{1}{4} \limsup_{k \rightarrow \infty} \| h_k \| $ is nonzero. Due to $ n_k \rightarrow 0 $, there exists an iteration $ \begin{pmatrix}
      x_j \\
      y_j
    \end{pmatrix} $ with arbitrarily large $ j \in \mathcal{S} $ such that $ \| h_j \| > 3 \varsigma $, and $ \gamma_3^{\prime} \| n_k \| < \gamma_2^{\prime} \varsigma $ for all $ k \geq j $. 
    
For convinience, we define $ \bar{\gamma}_L $ as the Lipschitz constant for $ h(x,y) = \nabla f(x) - \mu A(x) Y^{-1} e $. Then $h_k = h(x_k, y_k)=\nabla f_k + \mu A_k Y_k^{-1} e$. And within the ball 
    \begin{equation}
  \mathcal{O}_j = \left\{ \begin{pmatrix}
      x \\
      y
    \end{pmatrix} : \left\|
    \begin{pmatrix}
      x \\
      y
    \end{pmatrix}-
    \begin{pmatrix}
      x_j \\
      y_j
    \end{pmatrix}
\right\|
  < \varsigma / \bar{\gamma}_L \right\}, \nonumber
  \end{equation}
  any iteration $ \begin{pmatrix}
      x_k \\
      y_k
    \end{pmatrix} $ with $ \{ k \in \mathcal{S} : k \geq j \} $ satisfies
  \begin{equation}
    \| p_k^c \| \geq \gamma_2^{\prime} ( \| h_j \| - \| h_j - h_k \| ) - \gamma_3^{\prime} \| n_k \| \geq \gamma^{\prime}_2 ( 3 \varsigma - \varsigma - \varsigma ) = \gamma^{\prime}_2 \varsigma. \nonumber
  \end{equation}
  By Lemma \ref{nuknubar} and (\ref{tpredboun22}), we have that
  % with $ \gamma^{\prime}_4 = \gamma^{\prime}_{pt} \gamma^{\prime}_1 \gamma^{\prime}_2  $
  \begin{eqnarray}\label{predlow111}
      \text{pred}_k (d_k) & \geq & \gamma^{\prime}_{pt} \text{tpred}_k (t_k) \geq \gamma^{\prime}_{pt} \gamma^{\prime}_1 \| p_k^c \| \min \left\{ 1, \sqrt{\cfrac{ \| p_k^c \| }{ \sigma_k }}, \| p_k^c \| \right\} \nonumber\\
      &\geq& \gamma^{\prime}_{pt} \gamma^{\prime}_1 \gamma^{\prime}_2 \varsigma \min \left\{ 1, \sqrt{\cfrac{ \gamma^{\prime}_2 \varsigma }{ \sigma_k }} , \gamma^{\prime}_2 \varsigma \right\}.
  \end{eqnarray}
  Since $ \{ \phi_k \} $ is bounded below, $ \text{pred}_k \rightarrow 0 $. 
  Therefore, from \eqref{predlow111}, $ \cfrac{ 1}{ \sqrt{\sigma_k} } \rightarrow 0 $ for the subsequence of $ k $.
  % thus by (\ref{predlow111}), for the subsequence of $ k $ for which (\ref{predlow111}) holds. 
  Consequently, we can take $ j $ sufficiently large such that $ \sqrt{\cfrac{ \gamma^{\prime}_2 \varsigma }{\sigma_k }} \leq \min \left\{ 1 , \gamma^{\prime}_2 \varsigma \right\} $ for any $ k \in \{ k \in \mathcal{S} : k \geq j \} $ with $  
   \begin{pmatrix}
      x_k \\
      y_k
    \end{pmatrix}
     \in \mathcal{O}_j $. Then 
  \begin{equation}\label{pred112}
    \text{pred}_k (d_k) \geq \cfrac{ \gamma^{\prime}_{pt} \gamma^{\prime}_1 (\gamma^{\prime}_2)^{\frac{3}{2}} \varsigma^{\frac{3}{2}}}{\sqrt{ \sigma_k }}.
  \end{equation}
  Now by Lemma \ref{p-al} and (\ref{dkup}), if $ \begin{pmatrix}
      x_k \\
      y_k
    \end{pmatrix} \in \mathcal{O}_j, $
  \begin{equation}\label{a_pbp}
    \cfrac{| \text{ared}_k (d_k) - \text{pred}_k (d_k) |}{ \text{pred}_k (d_k)} \leq \cfrac{ 4 \gamma_L ( 1+ \bar{\nu} ) (\gamma^{\prime \prime}_n)^2 }{ \gamma^{\prime}_{pt} \gamma^{\prime}_1 (\gamma^{\prime}_2)^{\frac{3}{2}} \varsigma^{\frac{3}{2}} \sqrt{\sigma_k} } \leq 1- \eta_2 \nonumber
  \end{equation}
  for $ \cfrac{1 }{\sqrt{ \sigma_k }} $ sufficiently small, which yields the acceptance of the step. This implies that if $ \begin{pmatrix}
      x_k \\
      y_k
    \end{pmatrix} \in \mathcal{O}_j $ for all $ k \in \{ k \in \mathcal{S} : k \geq j \}  $, $ \sigma_k  $ would eventually stop decreasing. This contradicts our proof that $ \cfrac{ 1}{ \sqrt{\sigma_k} } \rightarrow 0 $. Consequently, the sequence must ultimately leave $ \mathcal{O}_j $ for some $ k > j $. 
  
  In that case, let $ \begin{pmatrix}
      x_{k+1} \\
      y_{k+1}
    \end{pmatrix}$ denote the first iteration following $ \begin{pmatrix}
      x_j \\
      y_j
    \end{pmatrix}$ that is not included in $ \mathcal{O}_j $. As can be inferred $ k+1 \in \mathcal{S} $ and from (\ref{pred112}),
  \begin{eqnarray}\label{phi113}
      \phi_{k+1} & \leq& \phi_j - \eta_1 \sum_{i=j}^k \text{pred}_i (d_i) \nonumber\\
      &  \leq& \phi_j - \eta_1 \gamma^{\prime}_{pt} \gamma^{\prime}_1 (\gamma^{\prime}_2)^{\frac{3}{2}} \varsigma^{\frac{3}{2}} \sum_{i=j}^k \cfrac{1 }{\sqrt{ \sigma_i }} \leq \phi_j - \cfrac{\eta_1 \gamma^{\prime}_{pt} \gamma^{\prime}_1 (\gamma^{\prime}_2)^{\frac{3}{2}} \varsigma^{\frac{5}{2}}}{\gamma_{xy} \bar{\gamma}_L}.
  \end{eqnarray}
%  \begin{equation}
%    \begin{aligned}\label{phi113}
%      \phi_{k+1} &~ \leq \phi_j - \eta_1 \sum_{i=j}^k \text{pred}_i (d_i) \\
%      &~ \leq \phi_j - \eta_1 \gamma^{\prime}_{pt} \gamma^{\prime}_1 (\gamma^{\prime}_2)^{\frac{3}{2}} \varsigma^{\frac{3}{2}} \sum_{i=j}^k \cfrac{1 }{\sqrt{ \sigma_i }} \\ 
%      &~ \leq \phi_j - \cfrac{\eta_1 \gamma^{\prime}_{pt} \gamma^{\prime}_1 (\gamma^{\prime}_2)^{\frac{3}{2}} \varsigma^{\frac{5}{2}}}{\gamma_{xy} \bar{\gamma}_L} .
%    \end{aligned}
%  \end{equation}
  The last inequality follows from the fact that $ \begin{pmatrix}
      x_{k+1} \\
      y_{k+1}
    \end{pmatrix}$ has left the ball $ \mathcal{O}_j $ with a radius of $ \varsigma / \bar{\gamma}_L $. Therefore, at the conclusion of Theorem \ref{AYgy0}, 
  \begin{equation}
     \sum_{i=j}^k \cfrac{1}{\sqrt{\sigma_i}}  \geq \cfrac{\varsigma}{\gamma_{xy} \bar{\gamma}_L} .\nonumber
  \end{equation}
  
  Given that the sequence $ \{ \phi_k \} $ is decreasing and bounded below, it is convergent. This is in contrast to the fact that $ j $ in (\ref{phi113}) can take on arbitrarily large values, and the fact that $ \varsigma \neq 0 $. Then $ h_k $ approaches zero. The theorem is proved.
\end{proof}

\begin{remark}\label{remarkinfes}
%We make use of following definition. 
~\\
We define that the sequence $ \{x_k\} $ is asymptotically feasible if $ g(x_k)^+ \rightarrow 0 $.
 
We call that the sequence 
$ \left\{ \begin{pmatrix}
      g_k \\
      A_k
    \end{pmatrix}\right\} $ has a limit point
     $ \begin{pmatrix}
      \bar{g}\\
      \bar{A}
    \end{pmatrix}$ failing the linear independence constraint qualification (LICQ) if the set $ \{ \bar{A}^{(i)} : \bar{g}^{(i)} = 0 \} $ is rank deficient.
 
The result of Theorem \ref{AYgy0} indicates that $ A_k ( g_k + y_k) \rightarrow 0 $ and $ Y_k ( g_k + y_k) \rightarrow 0 $. These results can yield two cases, the first case being $ g_k + y_k \rightarrow 0 $ and the second being $ g_k + y_k \not\rightarrow 0 $. 
% If $ g(x_k)^+ \rightarrow 0 $, the sequence $ \{x_k\} $ is called asymptotically feasible.

%We say that the sequence $ \{(g_k, A_k)\} $ has a limit point $(\bar{g} , \bar{A})$ failing the linear independence constraint qualification (LICQ), if the set $ \{ \bar{A}^{(i)} : \bar{g}^{(i)} = 0 \} $ is rank deficient.

%We have the following two results.

From assumption (AS1)-(AS3), Theorem \ref{AYgy0} and Lemma \ref{ykbound}, we can derive the following result ({\romannumeral1}).

({\romannumeral1}) When the sequence $\{x_k\}$ is not asymptotically feasible, the iteration tends towards stationarity of the measure of infeasibility $ x \mapsto \| g(x)^+ \| $, that is, $ A_k g_k^+ \rightarrow 0 $, and the penalty parameter $ \nu_k\to \infty$.

When $ g_k + y_k \not\rightarrow 0 $ and the matrices $ A_k $ and $ Y_k $ approach rank deficiency, such the following case may also occur. %Before discussing this possibility, let's introduce the definition. 
From assumptions (AS2) and Lemma \ref{ykbound}, we have that the following result ({\romannumeral2}) holds.

({\romannumeral2}) When the sequence $\{x_k\}$ is asymptotically feasible and one limit point of sequence $ \left\{ \begin{pmatrix}
      g_k \\
      A_k
    \end{pmatrix}\right\} $ fails LICQ, the penalty parameter $ \nu_k\to \infty$.
    
    Of course, in the case that $\{x_k\}$ is asymptotically feasible ($ g_k + y_k \rightarrow 0 $) and all limit points of the sequence
$ \left\{ \begin{pmatrix}
      g_k \\
      A_k
    \end{pmatrix}\right\} $, satisfy the LICQ, Theorem \ref{convergence} can be proven through Lemma \ref{nandnpred}-\ref{yawfr0}.

%The result (i) is derived from Theorem \ref{AYgy0} and the decrease of $ \phi ( x_k, y_k ; \nu ) $ for a fixed value of the penalty parameter $ \nu > 0 $. And The result (ii) is obtained from the decreasing property of $ \phi ( x_k, y_k ; \nu ) $. Since we need the ARCBIP to have good convergence, we do not specify the condition of $ A_k g_k^+ \rightarrow 0 $ and the matrices $ A_k $ and $ y_k $ approach rank deficiency.%in detail
\end{remark}

%\vskip 1.5mm \noindent {\textbf{3.4 Convergence results of Algorithm \ref{Alg2} }}\\
\subsection{Convergence Results of ARCBIP}

Now, we give the convergence results of the overall algorithm, where ARCBIP is operated to reduce the value of the barrier parameter $ \mu $ and stopping tolerance $e_t$ is set to 0. % Here, we do not consider conditions ensuring a good convergence rate, but only consider the global convergence of the algorithm.

All iterations generated by the algorithm form a single sequence $ \left\{ \begin{pmatrix}
      x_k\\
      y_k
    \end{pmatrix}\right\}_{k \geq 0} $. The index $ k_{j-1} (j \geq 1) $ labels the starting point of the $j$-th external iteration, and the end point of the iteration is $ 
    \begin{pmatrix}
      x_{k_j} \\
      y_{k_j}
    \end{pmatrix}$.

\begin{theorem}\label{overallconvergence}
  Suppose that $ \left\{ \begin{pmatrix}
      x_k\\
      y_k
    \end{pmatrix} \right\} $ is generated by ARCBIP, and for each barrier problem, assumptions (AS1)-(AS3) hold. Then, one of the following two results may occur.\\
  (\uppercase\expandafter{\romannumeral1}) For a parameter $ \mu_j $, either inequality
   \begin{equation}\label{3.1}
     \| g_k + y_k \| \leq a\mu_j
   \end{equation}
  is never satisfied, in which case the stationarity condition for minimizing $ x \mapsto \| g(x)^+ \| $ is satisfied in the limit, i.e., $ A(x_k) g(x_k)^+ \rightarrow 0 $, or inequality
   \begin{equation}\label{3.2}
     \| \nabla f_k + A_k \lambda_k \| \leq a\mu_j
   \end{equation}
   is never satisfied, in which case the sequence $ \left\{ \begin{pmatrix}
      g_k \\
      A_k
    \end{pmatrix}\right\} $ has a limit point
     $ \begin{pmatrix}
      \bar{g}\\
      \bar{A}
    \end{pmatrix}$ failing LICQ.\\
   (\uppercase\expandafter{\romannumeral2}) In each external iteration $ j $ of ARCBIP, the internal algorithm successfully finds a pair $ 
    \begin{pmatrix}
      x_{k_j} \\
      y_{k_j}
    \end{pmatrix}$ that satisfies (\ref{3.1})-(\ref{3.2}). All limit points $\hat{x}$ of $ \{x_{k_j}\} $ are feasible. Furthermore, if any limit point $\hat{x}$ of $ \{ x_{k_j} \} $ satisfies LICQ, then the first-order optimality condition for the problem \eqref{ori} hold at $ \hat{x} $: there exists $ \hat{\lambda} \in \mathbf{R}^m $ such that
  \begin{equation}
   \nabla \hat{f} + \hat{A} \hat{\lambda} = 0,\; \hat{g} \leq 0,\; \hat{\lambda} \geq 0,\; \hat{g}^T \hat{\lambda} = 0. \nonumber
  \end{equation}
\end{theorem}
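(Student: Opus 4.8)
The plan is to dichotomise according to whether, for some outer index $j$, the inner loop fails to terminate, or every inner loop terminates; these two possibilities are precisely the alternatives \uppercase\expandafter{\romannumeral1} and \uppercase\expandafter{\romannumeral2}. Suppose first that for some $\mu_j$ the inner loop never stops, so it produces an infinite sequence of iterates at the fixed barrier parameter $\mu_j$ and the conclusions of Theorem \ref{AYgy0}, Theorem \ref{convergence} and Remark \ref{remarkinfes} all apply. Non-termination means $E(x_k,y_k;\mu_j)\ge a\mu_j$ for all $k$; following the structure of the claim, I treat separately the case where \eqref{3.1} is never attained and the case where \eqref{3.2} is never attained. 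If $\|g_k+y_k\|\le a\mu_j$ is never attained, then $g_k+y_k\not\to 0$, i.e. $\{x_k\}$ is not asymptotically feasible, and Theorem \ref{AYgy0} together with Remark \ref{remarkinfes}\,(\romannumeral1) yields $A_kg_k^+\to 0$ and $\nu_k\to\infty$, giving the first branch of \uppercase\expandafter{\romannumeral1}. If instead $\|\nabla f_k+A_k\lambda_k\|\le a\mu_j$ is never attained, I argue by contradiction: were the singular values of $\begin{pmatrix}A_k^T & Y_k\end{pmatrix}$ bounded away from zero, the inner iteration would converge to a first-order point of the barrier problem by Theorem \ref{convergence} (together with the induced feasibility $g_k+y_k\to 0$), at which $E(x_k,y_k;\mu_j)\to 0$, so the stopping test would be met, contradicting non-termination. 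Hence $\begin{pmatrix}A_k^T & Y_k\end{pmatrix}$ must approach rank deficiency, and Remark \ref{remarkinfes}\,(\romannumeral2) then exhibits a limit point of $\{(g_k,A_k)\}$ failing LICQ, which is the second branch.

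Now suppose every inner loop terminates, so for each $j$ we obtain $(x_{k_j},y_{k_j})$ with $\|g_{k_j}+y_{k_j}\|\le a\mu_j$, $\|\nabla f_{k_j}+A_{k_j}\lambda_{k_j}\|\le a\mu_j$ and $\|Y_{k_j}\lambda_{k_j}-\mu_j e\|\le a\mu_j$, while the update \eqref{muupdate}--\eqref{vartheta} drives $\mu_j\to 0$. Let $\hat x$ be a limit point of $\{x_{k_j}\}$ along a subsequence, with $\hat g=g(\hat x)$ and $\hat A=A(\hat x)$. Feasibility is immediate: $g_{k_j}+y_{k_j}\to 0$ with $y_{k_j}>0$ gives $\hat g=-\hat y\le 0$, where $\hat y=\lim y_{k_j}\ge 0$. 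Moreover $\|Y_{k_j}\lambda_{k_j}-\mu_j e\|\le a\mu_j$ and $\mu_j\to 0$ imply $Y_{k_j}\lambda_{k_j}\to 0$, so $y_{k_j}^{(i)}\lambda_{k_j}^{(i)}\to 0$ for every $i$.

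The decisive step, which I expect to be the main obstacle, is to bound $\{\lambda_{k_j}\}$ along the subsequence without assuming it a priori; this is where LICQ enters. I would split the indices into the active set $\mathcal A=\{i:\hat g^{(i)}=0\}$ and its complement. For inactive $i$ we have $\hat y^{(i)}>0$, so $y_{k_j}^{(i)}\lambda_{k_j}^{(i)}\to 0$ forces $\lambda_{k_j}^{(i)}\to 0$ and the inactive part of $A_{k_j}\lambda_{k_j}$ vanishes in the limit; combined with $\nabla f_{k_j}+A_{k_j}\lambda_{k_j}\to 0$ this leaves $\nabla f_{k_j}+\sum_{i\in\mathcal A}\lambda_{k_j}^{(i)}A_{k_j}^{(i)}\to 0$. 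Under LICQ the active gradients $\{\hat A^{(i)}:i\in\mathcal A\}$ are linearly independent, so the active Jacobian has a uniformly bounded left inverse near $\hat x$, whence the active multipliers, and therefore the whole sequence $\{\lambda_{k_j}\}$, are bounded. Passing to a further convergent subsequence $\lambda_{k_j}\to\hat\lambda$ and taking limits yields $\nabla\hat f+\hat A\hat\lambda=0$ and $\hat g^T\hat\lambda=0$, while $\hat g\le 0$ is already established and $\hat\lambda\ge 0$ follows from the positivity enforced by the multiplier update \eqref{lambda}--\eqref{lambposi}. This produces the full first-order optimality system and completes alternative \uppercase\expandafter{\romannumeral2}. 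Beyond the multiplier bounding, the remaining care lies in using the complementarity residual correctly to send the inactive multipliers to zero and in handling the active/inactive split uniformly along the chosen subsequence.
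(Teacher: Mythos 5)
Your proposal is correct and follows essentially the same route as the paper: the same dichotomy on whether an inner loop fails to terminate, the same appeal to Theorem \ref{AYgy0}, Theorem \ref{convergence} and Remark \ref{remarkinfes} for alternative \uppercase\expandafter{\romannumeral1}, and the same active/inactive split with the complementarity residual killing inactive multipliers and LICQ bounding the active ones for alternative \uppercase\expandafter{\romannumeral2}. Your write-up is in fact slightly more explicit than the paper's (e.g.\ the contradiction argument showing the stopping test would be met if LICQ held, and the explicit use of $\|Y_{k_j}\lambda_{k_j}-\mu_j e\|\le a\mu_j$), but the underlying argument is the same.
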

\begin{proof}
    %Suppose that for a certain value of $ \mu_j $, Algorithm \ref{Alg2} cannot find a point that satisfies (\ref{3.1}) and (\ref{3.2}). This means that Algorithm \ref{Alg2} generates an infinite sequence for problem (\ref{barr}) where $ \mu = \mu_j $, but the result of Theorem \ref{convergence} does not occur. 
    Assume that ARCBIP cannot find a point that satisfies (\ref{3.1}) and (\ref{3.2}) for a certain value of $ \mu_j $. This implies that ARCBIP generates an infinite sequence with $\mu = \mu_j $ for the barrier problem \eqref{barr}. Then the result of Theorem \ref{convergence} does not occur. Since assumptions (AS1)-(AS3) hold, these imply that for a value of $\mu$, either result (i) or (ii) of Remark \ref{remarkinfes} occurs. We obtain the conclusion (I).
    
    The other possibility is that ARCBIP satisfies (\ref{3.1}) - (\ref{3.2}) for all $ j \geq 1 $. Let $ \mathcal{J} $ be a subsequence of index $ j $, such that $ x_{k_j} \rightarrow \hat{x} $ when $ j \rightarrow \infty $ in $ \mathcal{J} $. Since $ 0 \leq g_{k_j}^+ \leq g_{k_j} + y_{k_j} $ and $ g_{k_j} + y_{k_j} \rightarrow 0 $, then $ \hat{g} = g(\hat{x}) \leq 0 $ ($ \hat{x} $ is feasible) and  $ y_{k_j} \rightarrow \hat{y} = - \hat{g} $ when $ j\rightarrow \infty $ in $ \mathcal{J} $. 
    Suppose that LICQ holds at $ \hat{x} $ and consider a set of indices $ \mathcal{L} = \{ l : \hat{g}^{(l)} = 0 \} $. For $ l \notin \mathcal{L} $, $ \hat{g}^{(l)} < 0 $ and $ \hat{y}^{(l)} > 0 $, so that $ \lambda_{k_j}^{(l)} \rightarrow 0 $ when $ j \rightarrow \infty $ in $\mathcal{J}$. From this and $ \nabla f_{k_j} + A_{k_j} \lambda_{k_j} \rightarrow 0 $, we can deduce that
    \begin{equation}\label{5.3}
      \nabla f_{k_j} + \sum_{l \in \mathcal{L} } \lambda_{k_j}^{(l)} \nabla g_{k_j}^{(l)} \rightarrow 0.
    \end{equation}
    According to the LICQ, the vector $ \{ \nabla \hat{g}^{(l)} : l \in \mathcal{L} \} $ is linearly independent. By (\ref{5.3}), the positive sequence $ \{ \lambda_{k_j} \}_{j \in \mathcal{J} } $ converges to some value $ \hat{\lambda} \geq 0 $. Now, when $ j \rightarrow \infty $ in $\mathcal{J}$, %we take the limit in $ \nabla f_{k_j} + A_{k_j} \lambda_{k_j} $, 
    we have that $\nabla \hat{f} + \hat{A} \hat{\lambda} = 0$
    and $ \hat{g}^T \hat{\lambda} =0 $. Therefore, conclusion (II) holds.
  \end{proof}

\section{\bf Numerical Results}
\label{sec5}
% In this section, we present the numerical results of Algorithm \ref{Alg2} which have been performed on Virtual Machine VMware in a Windows 11 System $^\circledR$ Perform on Lenovo laptops in Ubuntu 20.04.6 LTS in Workstation 17 Pro.
In this section, we present the numerical results of ARCBIP which have been performed on laptops with AMD Ryzen 7 5800H with Radeon Graphics 3.20 GHz. Numerical testing is implemented under MATLAB R2020b.

%In the implementation, $B_k$ is the Hessian $\nabla^2_{xx} L(x,y,\lambda)$ of the Lagrangian function for $x$. 

We use the standard initial point $x_0$ for all test problems. The initial parameters are selected as follows: $\nu = 1$, $\mu = 1$, $\xi = 0.8$, $\eta_1 = 10^{-8}$, $\eta_2 = 0.9$, $\delta = 0.0001$, $\tau = 0.995$, $b = 0.001$, $\sigma_0 = 1$. The parameter $a$ is relatively sensitive to our algorithm, so its selected range is relatively large, specifically an integer of $[0, 10]\cup10\ast[0, 10]$.
$y_0$ is an all-ones vector. Then $Y_0$ is the identity matrix and initial Lagrange multiplier is set as
%\begin{equation}
%  \nu = 1, \mu = 1, \xi = 0.8, \eta_1 = 0.01, \eta_2 = 0.9 ,\delta = 0.0001,  \tau = 0.995, b = 0.0001, e_t = 10^{-8}.
%  \nonumber
%\end{equation}
\begin{equation}\label{lambda0}
\lambda_0 := -\left[  
\begin{pmatrix}
 A(x_0)^T & Y_0
\end{pmatrix}
\begin{pmatrix}
 A(x_0) \\
 Y_0
\end{pmatrix}
  \right]^{-1} 
\begin{pmatrix}
 A(x_0)^T & Y_0
\end{pmatrix} 
\begin{pmatrix}
 \nabla f(x_0) \\
 -\mu e
\end{pmatrix}. \nonumber
\end{equation}

The regularisation parameter is updated by the following rules
\begin{equation}\label{sigma2}
\sigma_{k+1} = 
\begin{cases*}
%[\max(\hat{\sigma}_{\min}, \sigma_k /20),\sigma_k]
\max\{\hat{\sigma}_{\min}, \sigma_k /20\}
& $\rho_k \geq \eta_2$, \\
\sigma_k 
& $\rho_k \in [\eta_1, \eta_2)$, \\
2 \sigma_k 
& $\rho_k < \eta_1$,
\end{cases*}\nonumber
\end{equation}
where $\hat{\sigma}_{\min} = 10^{-16}$.
%\begin{equation}\label{sigma2}
%\sigma_{k+1} = 
%\begin{cases*}
%\max\bigl\{ 
%  \min\bigl\{ \sigma_k, \, \| \nabla f_k + A_k \lambda_k \|, \, \| Y_k \lambda_k - \mu e \|, \, \| g_k + y_k \| \bigr\}, \, \epsilon_m 
%\bigr\} 
%& $\rho_k \geq \eta_2$, \\
%\sigma_k 
%& $\rho_k \in [\eta_1, \eta_2)$, \\
%2 \sigma_k 
%& $\rho_k < \eta_1$,
%\end{cases*}\nonumber
%\end{equation}
%where $ \epsilon_m = 10^{-16} $. 

We solve ARC subproblems \eqref{normu} and \eqref{tang5} dropping constraints using the algorithm proposed by Lieder \cite{Lieder2020} (the methods in \cite{Cartis2011A,Jiang2021,Jia2022} are also excellent choices), where the ARC subproblem is reduced to a generalized eigenvalue problem which can be solved efficiently due to existing highly advanced eigenvalue solvers. And we use the backtracking strategy \cite{Nocedal1999} to handle the constraints of the subproblems for obtaining $n_k$ and $t_k$.
%The computation terminates when 
%\begin{equation}
%  \text{Res} \overset{def}{=} E(x_k,y_k;\mu) = \max \left\{ \| \nabla f_k + A_k \lambda_k \| , \| Y_k \lambda_k - \mu e \| , \| g_k + y_k \| \right\}
%\end{equation}
%is satisfied.
%The computation terminates when 
%\begin{equation}
%  E(x_k,y_k;0) \leq e_t \text{ and } E(x_k,y_k;\mu_j) \leq a \mu_j \nonumber
%\end{equation}
%are both satisfied. 

We have tested our algorithm on test problems from \cite{HStests,Liu2020}, whose characteristics are described in Table \ref{maininfor}. For each problem, we give the number of variables (denoted as $\textit{n}$) and the total number of constraints (denoted as $\textit{m}$), including equalities and general inequalities (but not bounds on the variables). We also describe what conditions (free, bounds) were applied to the variables. And we embody the types of constraints (equalities, inequalities, linear, nonlinear) and the characteristics of the objective function. For example, the objective of problem HS15 is nonlinear function, the constraints are nonlinear inequalities, some variables are free, and some variables contain bounds. %And the updating of barrier parameters are select by $\mu_{j+1} = b \mu_j$. 
These test problems cover most of the numerical difficulties encountered in practical applications, including badly scaled objective and constraint functions, badly scaled variables, ill-conditioned optimization problems, non-regular solutions at points where the constraint qualification is not satisfied, distinct local solutions, and infinitely many solutions. Therefore, we employ these test problems to assess the feasibility and robustness of Algorithm \ref{Alg2} (ARCBIP).

We first compared ARCBIP with fmincon in MATLAB with the same termination tolerance $$Res:=E(x_k,y_k;0) \leq e_t=1e-8$$ on the whole test problems set of \cite{HStests}. The fmincon algorithm \cite{MATLAB2020b} that comes with MATLAB is flexible and comprehensive. It can handle linear, nonlinear, equality and inequality constraints, and adapt to optimization problems of different scales and complexities. Moreover, its' built-in functions have undergone extensive testing and verification. They show good stability and reliability in various complex mathematical models and practical application scenarios. 
For comparison, fmincon is set to use interior-point trust-region based algorithms, MaxIterations=10000, and MaxFunctionEvaluations=50000. 
The comparison results are reported in Table \ref{numercal table}, where the columns $\textit{n}$ and $\textit{m}$ are the numbers of variables and constraints, respectively. $NO$ and $NI$ are the numbers of computation of the outer iteration and the inner iteration, respectively. $NIF$ and $NIG$ are the numbers of computation of the function and gradient evaluations, respectively. “-” denotes that the algorithm failed to solve the corresponding problem. For these test problems, ARCBIP failed on 11 cases with a success rate of 90.4\%, while fmincon failed on 13 cases with a success rate of 88.6\%. ARCBIP yields a slightly higher success rate. On the other hand, for the 93 test problems where both algorithms achieved successful runs, the total number and average number of iterations are summarized in the last two rows of Table \ref{numercal table}. The results indicate that fmincon has a slight advantage. Fig \ref{Figure1} provides a visual illustration of the numerical performance of ARCBIP and fmincon for these 93 test problems based on the logarithmic performance profiles \cite{Dolan2001} (in the performance profile, the higher the position of the curve, the greater the probability of solving the problem under the corresponding performance ratio). 
%Based on these comparison results from both aspects, it can be seen that ARCBIP is competitive with fmincon on the test problems set of \cite{HStests}.
Based on the results in Table \ref{numercal table} and the performance profile in Fig \ref{Figure1}, ARCBIP is not strictly competitive with fmincon overall; however, for the 93 test problems successfully solved by both algorithms, the performance profile in Fig \ref{Figure1} (using a logarithmic scale for the performance ratio) shows that, for the NIF metric, ARCBIP is within a factor of 2 of fmincon for approximately 70\% of these problems. Similar to the NIF metric, the performance profiles for NI and NIG confirm that ARCBIP remains within a factor of 2 of fmincon on roughly 70\% of the common successful runs, demonstrating its robustness across different performance measures. This indicates that ARCBIP achieves comparable efficiency on a substantial portion of the test problems set of \cite{HStests}.
%NO reflects the iterative complexity, while NI, NIF, and NIG represent the computational complexity.
%$Res$ represents the value of the error function $E(x_k,y_k;\mu)$ in the $k$-th iteration of ARCBIP in \eqref{E} and $Res$ is used to demonstrate the accuracy of our calculations.

Then, we compared Algorithm \ref{Alg2} (ARCBIP) with Algorithm 3.6 in \cite{Liu2020}, IPOPT \cite{WäChter2006} (Version 3.0.0) and fmincon with the same termination tolerance $ e_t = 10^{-8} $ on all the test problems from \cite{Liu2020}. 
Algorithm 3.6 \cite{Liu2020} is a globally convergent primal-dual interior point relaxation method with nice convergence results and good numerical performance. It does not require any primal or dual iterates to be interior-points, which has similarity to some warmstarting interior-point methods and is different from most of the globally convergent interior-point methods in the literatures.
%This provides valuable references for algorithm improvement and application. 
IPOPT \cite{WäChter2006} is based on the interior-point methods. It performs very well in solving various nonlinear programming problems. Due to its efficiency and open-source nature, IPOPT is often used as a benchmark solver for nonlinear optimization problems in both scientific research and industry.
We use all the test problems provided by \cite{Liu2020} for comparative analysis. The comparison results of 42 test problems are presented in Table \ref{table4}. “-” denotes that the algorithm failed to solve the corresponding problem or the number of iterations exceeded $2000$. 
For these test problems, both ARCBIP and Algorithm 3.6 in \cite{Liu2020} achieved successful solutions for all cases, while IPOPT and fmincon each failed on two cases. For the computational results of the 38 test problems solved successfully by all four algorithms, the total and average iteration counts are summarized in the last two rows of Table \ref{table4}. 
For ARCBIP, the total number of iterations for $NI$, $NIF$, and $NIG$ are $551$, $589$, and $578$, respectively, and their average number of iterations are $15$, $16$, and $16$, respectively. These core metric values of ARCBIP are significantly lower than those of other algorithms. 
By using the logarithmic performance profiles \cite{Dolan2001}, we display the performance based on the numerical results in Table \ref{table4} visually (see Fig \ref{Figure2}  ).
These results indicate that ARCBIP outperforms the other three algorithms in terms of numerical performance on these test problems.
\begin{center}
{
\setlength{\tabcolsep}{1pt}{
\begin{longtable}{lcccccc}
\caption{Test problems}\label{maininfor}
  \endfirsthead
  \multicolumn{7}{l}{Table \ref{maininfor} continued}\\
  \hline
\multirow{2}{*}{Problem}  & \multicolumn{2}{l}{Dimension}&\multirow{2}{*}{ }  &\multirow{2}{*}{Variable types}    &\multirow{2}{*}{Constraint types} &\multirow{2}{*}{Objective} \\
                          \cmidrule{2-3}%\cline{2-3}
                          &$n$&$m$                       & &  &                                 \\
  \hline
  \endhead
  \hline
\multirow{2}{*}{Problem}  & \multicolumn{2}{l}{Dimension}&\multirow{2}{*}{ }    &\multirow{2}{*}{Variable types}    &\multirow{2}{*}{Constraint types}  &\multirow{2}{*}{Objective} \\
                          \cmidrule{2-3}  %\cline{2-3}

                          &$n$&$m$                       & &     &                           \\
\hline
  CB2        &3   &3  &&free  &nonlin ineq &linear   \\ %%%
  CB3        &3   &3  &&free  &nonlin ineq &linear   \\
  CHACONN1   &3   &3  &&free  &nonlin ineq &linear \\ %%%
  CHACONN2   &3   &3  &&free  &nonlin ineq &linear  \\ %%%
  CONGIGMZ   &3   &5  &&free  &linear ineq, nonlin ineq &linear \\ 
  DEMYMALO   &3   &3  &&free  &linear ineq, nonlin ineq &linear   \\ 
  DIPIGRI   &7   &4   &&free  &nonlin ineq &nonlin  \\ 
  EXPFITA   &5   &22  &&free  &linear ineq &nonlin   \\ 
  GIGOMEZ1   &3   &3  &&free  &linear ineq, nonlin ineq &linear   \\ 
  GIGOMEZ2   &3   &3  &&free  &nonlin ineq &nonlin  \\ 
  GIGOMEZ3   &3   &3  &&free  &nonlin ineq &nonlin\\ 
  GOFFIN   &51   &50  &&free  &linear ineq &nonlin   \\ 
  HAIFAS   &13   &9   &&free  &nonlin ineq &linear  \\ 
  HALDMADS &6   &42   &&free  &nonlin ineq &linear  \\
  HS01    &2   &1     &&free, bounded &  &nonlin \\
  HS02    &2   &1     &&free, bounded &  &nonlin \\ 
  HS03    &2   &1     &&free, bounded &  &nonlin  \\
  HS04    &2   &2     &&bounded  &  &nonlin  \\
  HS05    &2   &4   &&bounded  &  &nonlin   \\
  HS06    &2   &2   &&free     &nonlin eq &nonlin   \\
  HS07    &2   &2   &&free     &nonlin eq &nonlin   \\
  HS08    &2   &4   &&free     &nonlin eq &nonlin \\ 
  HS09    &2   &2   &&free     &linear eq &linear  \\
  HS10    &2   &1   &&free     &nonlin ineq &nonlin \\ %%%
  HS11    &2   &1   &&free     &nonlin ineq &nonlin  \\ %%%
  HS12    &2   &1   &&free     &nonlin ineq &nonlin   \\ 
  HS13    &2   &3   &&bounded  &nonlin ineq &nonlin\\
  HS14    &2   &3   &&free     &linear eq, nonlin ineq &nonlin\\
  HS15    &2   &3   &&free, bounded &nonlin ineq &nonlin  \\
  HS16    &2   &4   &&bounded &nonlin ineq &nonlin   \\
  HS17    &2   &5   &&bounded &nonlin ineq &nonlin   \\
  HS18    &2   &6   &&bounded &nonlin ineq &nonlin   \\
  HS19    &2   &6   &&bounded &nonlin ineq &nonlin   \\
  HS20    &2   &5   &&free, bounded &nonlin ineq &nonlin  \\
  HS21    &2   &5   &&bounded &linear ineq  &nonlin  \\
  HS22    &2   &2   &&free  &linear ineq, nonlin ineq &nonlin   \\
  HS23    &2   &9   &&bounded &linear ineq, nonlin ineq &nonlin \\
  HS24    &2   &5   &&bounded & linear ineq  &nonlin \\
  HS25    &3   &6   &&bounded &   &nonlin  \\
  HS26    &3   &2   &&free &nonlin eq &nonlin \\
  HS27    &3   &2   &&free &nonlin eq  &nonlin\\ 
  HS28    &3   &3   &&free &linear eq  &nonlin   \\ 
  HS29    &3   &1   &&free &nonlin eq  &nonlin \\
  HS30    &3   &7   &&bounded &nonlin ineq &nonlin    \\   
  HS31    &3   &7   &&bounded &nonlin ineq &nonlin   \\   
  HS32    &3   &6   &&bounded &linear eq, nonlin ineq &nonlin   \\   
  HS33    &3   &6   &&bounded &nonlin ineq &nonlin   \\   
  HS34    &3   &8   &&bounded &nonlin ineq &linear   \\   
  HS35    &3   &4   &&bounded &linear ineq &nonlin   \\
  HS36    &3   &7   &&bounded &linear ineq  &nonlin   \\
  HS37    &3   &8   &&bounded &linear ineq  &nonlin   \\
  HS38    &4   &8   &&bounded &  &nonlin   \\
  HS39    &4   &4   &&free &nonlin eq  &linear   \\
  HS40    &4   &10   &&free &nonlin eq  &nonlin   \\
  HS41    &3   &8   &&bounded &linear eq  &nonlin   \\
  HS42    &4   &4   &&free &linear eq, nonlin eq  &nonlin   \\
  HS43    &4   &3   &&free &nonlin ineq &nonlin  \\
  HS44    &4   &10  &&bounded &linear ineq  &nonlin   \\
  HS45    &5   &10  &&bounded &  &nonlin    \\
  HS46    &3   &8   &&free &nonlin eq  &nonlin   \\
  HS47    &5   &6   &&free &nonlin eq &nonlin  \\
  HS48    &5   &4   &&free &linear eq  &nonlin  \\
  HS49    &5   &4   &&free &linear eq  &nonlin  \\
  HS50    &5   &6   &&free &linear eq  &nonlin   \\
  HS51    &5   &6   &&free &linear eq  &nonlin   \\
  HS52    &5   &6   &&free &linear eq  &nonlin   \\
  HS53    &5   &16  &&bounded &linear eq  &nonlin   \\
  HS54    &6   &14  &&bounded &linear eq  &nonlin   \\
  HS55    &6   &20  &&bounded &linear eq  &nonlin   \\
  HS56    &7   &8   &&free &nonlin eq  &nonlin   \\
  HS57    &2   &3   &&bounded &nonlin ineq &nonlin  \\
  HS59    &2   &7   &&bounded &nonlin ineq &nonlin  \\
  HS60    &3   &20  &&bounded &nonlin eq  &nonlin   \\
  HS61    &3   &4   &&free &nonlin eq  &nonlin   \\
  HS62    &3   &8   &&bounded &linear eq  &nonlin   \\
  HS63    &3   &10  &&bounded &linear eq, nonlin eq  &nonlin   \\
  HS64    &3   &4   &&bounded &nonlin ineq &nonlin   \\
  HS65    &3   &7   &&bounded &nonlin ineq  &nonlin   \\
  HS66    &3   &8   &&bounded &nonlin ineq  &linear   \\
  HS68,69    &4   &12  &&bounded &nonlin eq  &nonlin   \\
  HS70    &4   &9   &&bounded &nonlin ineq  &nonlin   \\
  HS71    &4   &11  &&bounded &nonlin eq, nonlin ineq  &nonlin   \\
  HS72    &4   &10  &&bounded &nonlin ineq  &linear   \\
  HS73    &4   &8   &&bounded &linear eq, linear ineq, nonlin ineq &linear \\
  HS74,75    &4   &16   &&bounded &nonlin eq, linear ineq &nonlin \\
  HS76    &4   &7   &&bounded &linear ineq  &nonlin    \\
  HS77    &6   &4   &&free &nonlin eq &nonlin \\
  HS78    &5   &6   &&free &nonlin eq &nonlin \\
  HS79    &5   &6   &&free &nonlin eq &nonlin \\
  HS80    &5   &16   &&bounded &nonlin eq &nonlin \\
  HS81    &5   &16   &&bounded &nonlin eq &nonlin \\
  HS83    &5   &16   &&bounded &nonlin ineq &nonlin \\
  HS84    &5   &16   &&bounded &nonlin ineq &nonlin \\
  HS85    &5   &48   &&bounded &linear ineq, nonlin ineq &nonlin \\
  HS86    &5   &15  &&bounded &linear ineq  &nonlin  \\
  HS87    &6   &20  &&bounded &nonlin eq  &nonlin    \\
  HS88    &2   &1   &&free &nonlin ineq &nonlin \\
  HS89    &3   &1   &&free &nonlin ineq &nonlin \\
  HS90    &4   &1   &&free &nonlin ineq &nonlin \\
  HS91    &5   &1   &&free &nonlin ineq &nonlin \\
  HS92    &6   &1   &&free &nonlin ineq &nonlin \\
  HS93    &6   &8   &&bounded & nonlin ineq &nonlin \\
  HS95-98    &6   &20   &&bounded &nonlin ineq &linear \\
  HS99    &7   &18   &&bounded &nonlin eq &nonlin \\
  HS100   &7   &4  &&free &nonlin ineq  &nonlin \\
  HS100MOD   &7  &4  &&free &nonlin ineq  &nonlin \\
  HS101-103    &7   &20   &&bounded &nonlin ineq &nonlin \\
  HS104    &7   &22   &&bounded &nonlin ineq &nonlin \\
  HS105    &8   &1  &&bounded &linear ineq &nonlin  \\
  HS106    &8   &22   &&bounded &linear ineq, nonlin ineq &linear \\
  HS107    &8   &20   &&free, bounded &nonlin eq &nonlin \\
  HS108    &9   &14   &&free, bounded & nonlin ineq &nonlin \\
  HS109    &9   &34   &&bounded &nonlin eq, linear ineq, nonlin ineq &nonlin \\
  HS110   &10  &20  &&bounded &  &nonlin  \\
  HS111    &10   &20   &&bounded &nonlin eq  &nonlin \\
  HS112    &10   &16  &&bounded &linear eq  &nonlin  \\
  HS113   &10  &8  &&free &linear ineq, nonlin ineq &nonlin   \\
  HS114   &10  &11  &&bounded &linear (in)eq, nonlin (in)eq &nonlin   \\
  HS116   &13  &15  &&free &linear ineq, nonlin ineq &linear   \\
  HS117   &15  &20 &&bounded &nonlin ineq &nonlin  \\
  HS118   &15  &42  &&bounded &linear ineq &nonlin   \\
  HS119   &16  &32  &&bounded &linear eq &nonlin   \\
  KIWCRESC   &3   &2  &&free &nonlin ineq &linear \\ 
  MADSEN  &3   &6  &&free &nonlin ineq &linear  \\ %%%
  MAKELA1 &3   &2  &&free &linear ineq, nonlin ineq &linear \\
  MAKELA2 &3   &4  &&free &nonlin ineq &linear \\
  MAKELA3 &21   &20  &&free &nonlin ineq &linear \\
  MAKELA4 &21   &40  &&free &linear ineq &linear  \\  
  MIFFLIN1 &3   &2   &&free &linear ineq, nonlin ineq &linear \\
  MIFFLIN2 &3   &2   &&free &nonlin ineq &linear   \\  
  PENTAGON &6   &15  &&free &linear ineq &nonlin \\
  POLAK1   &3   &2   &&free &nonlin ineq &linear \\ 
  POLAK3   &12  &10  &&free &nonlin ineq &linear  \\
  POLAK5   &3   &2   &&free &nonlin ineq &linear  \\ 
  ROSENMMX &5   &4   &&free &nonlin ineq &linear \\  
  S218  &2   &1  &&free &nonlin ineq &linear   \\
  S221  &2   &3  &&free &nonlin ineq &linear  \\
  S222  &2   &3  &&free &nonlin ineq &linear \\
  S225  &2   &5  &&free &linear ineq, nonlin ineq  &nonlin \\
  S227  &2   &2  &&free &nonlin ineq &nonlin \\
  S228  &2   &2  &&free &linear ineq, nonlin ineq  &nonlin\\
  S230  &2   &2  &&free &nonlin ineq  &linear \\
  S231  &2   &2  &&free &linear ineq &nonlin\\
  S233  &2   &1  &&free &nonlin ineq &nonlin \\
  S264  &4   &3  &&free &nonlin ineq &nonlin \\
  S268  &5   &6  &&free &linear ineq  &nonlin\\
  SPIRAL &3  &2  &&free &nonlin ineq &linear   \\  
  TFI1     &3   &101  &&free &nonlin ineq &nonlin   \\
  TFI3     &3   &101  &&free &linear ineq &nonlin \\
  WOMFLET &3   &3  &&free &nonlin ineq &linear   \\
  \hline
\end{longtable}
}}
\end{center}

\begin{center}
{
\setlength{\tabcolsep}{3pt}
\begin{longtable}{lcccccccccccc}
\caption{Comparison results I}\label{numercal table}
  \endfirsthead
  \multicolumn{13}{l}{Table \ref{numercal table} continued}\\
  \hline
  \multirow{2}{*}{Problem}  &&\multicolumn{5}{c}{ARCBIP}&&\multicolumn{5}{c}{fmincon} \\ 
           \cmidrule{2-7} 
		   \cmidrule{9-13}
&&$NO$  &$NI$ &$NIF$ &$NIG$ &$Res$ &&$NO$ &$NI$ &$NIF$ &$NIG$ &$Res$\\
%&&\multirow{2}{*}{$NO$}  &\multirow{2}{*}{$NI$} &\multirow{2}{*}{$NIF$} &\multirow{2}{*}{$NIG$} &\multirow{2}{*}{$Res$} &&\multirow{2}{*}{$NO$}  &\multirow{2}{*}{$NI$} &\multirow{2}{*}{$NIF$} &\multirow{2}{*}{$NIG$} &\multirow{2}{*}{$Res$}\\
  \hline
  \endhead
  \hline
\multirow{2}{*}{Problem} &&\multicolumn{5}{c}{ARCBIP}&&\multicolumn{5}{c}{fmincon} \\  
           \cmidrule{2-7} 
		   \cmidrule{9-13}
&&$NO$  &$NI$ &$NIF$ &$NIG$ &$Res$ &&$NO$ &$NI$ &$NIF$ &$NIG$ &$Res$\\
%&&\multirow{2}{*}{$NO$}  &\multirow{2}{*}{$NI$}  &\multirow{2}{*}{$NIF$} &\multirow{2}{*}{$NIG$} &\multirow{2}{*}{$Res$ } &&\multirow{2}{*}{$NO$}  &\multirow{2}{*}{$NI$} &\multirow{2}{*}{$NIF$} &\multirow{2}{*}{$NIG$} &\multirow{2}{*}{$Res$}\\
\hline
%  CB2        &&4  &11 &12  &12 &8.26e$-$09  &&13 &14 &15 &14 &4.02e$-$09\\
%  CB3        &&5 &11 &12  &12 &9.10e$-$09   &&13 &13 &14 &14 &4.00e$-$09\\ 
%  CHACONN1   &&4  &11 &12  &12 &1.00e$-$09  &&13 &13 &14 &14 &4.00e$-$09 \\
%  CHACONN2   &&6  &11 &12  &12 &8.71e$-$09  &&13 &13 &14 &14 &4.00e$-$09 \\
%  CONGIGMZ   &&4 &21 &22  &18 &8.58e$-$09   &&34 &48 &49 &35 &6.19e$-$09 \\
%  DEMYMALO   &&5 &13 &14  &14 &3.46e$-$09   &&14 &14 &15 &15 &2.00e$-$10 \\
%  DIPIGRI    &&4 &12 &13  &13 &6.49e$-$10   &&17 &36 &37 &18 &3.20e$-$09\\ 
%  EXPFITA    &&4 &17 &18  &18 &2.80e$-$09   &&46 &46 &47 &47 &5.96e$-$09\\
%  GIGOMEZ1   &&3 &15 &16  &16 &5.46e$-$09   &&15 &18 &19 &15 &2.00e$-$08\\ 
%  GIGOMEZ2   &&4 &15 &16  &16 &1.30e$-$09   &&13 &13 &14 &14 &4.70e$-$09\\
%  GIGOMEZ3   &&5 &11 &12 &12 &5.96e$-$09    &&14 &14 &15 &15 &4.00e$-$09\\
%  GOFFIN     &&3 &6 &7  &6 &4.76e$-$09      &&17 &17 &18 &18 &2.00e$-$10\\
%  HAIFAS     &&10 &16 &17  &16 &7.97e$-$09  &&116 &166 &167 &117 &3.20e$-$09\\
%  HALDMADS   &&6 &25 &26 &25 &8.13e$-$09    &&33 &38 &39 &34 &8.16e$-$10\\ 
  HS01       &&13 &11 &12 &11 &4.10e$-$09   &&39 &46 &47 &40 &3.20e$-$10\\
  HS02       &&14 &9 &10 &10 &8.93e$-$09    &&15 &20 &21 &16 &4.40e$-$08 \\
  HS03       &&15 &10 &11 &11 &9.68e$-$10   &&15 &16 &17 &16 &4.00e$-$09\\
  HS04       &&4 &14 &15 &14 &1.52e$-$09    &&- &- &- &- &-\\
  HS05       &&4 &7 &8 &8 &4.89e$-$12       &&11 &13 &14 &12 &2.00e$-$10\\
  HS06       &&14 &15 &16 &14 &7.29e$-$10   &&9 &16 &17 &10 &4.42e$-$11\\
  HS07       &&12 &13 &14 &14 &6.74e$-$09   &&11 &19 &20 &12 &8.12e$-$12\\
  HS08       &&14 &25 &26 &25 &9.67e$-$10   &&7 &9 &10 &8 &3.89e$-$14\\
  HS09       &&4 &36 &37 &37 &2.53e$-$10    &&6 &6 &7 &7 &8.76e$-$10\\
  HS10       &&2  &10 &11 &11 &7.41e$-$09   &&16 &16 &17 &17 &2.00e$-$10\\
  HS11       &&8  &8  &9  &9  &6.69e$-$10   &&10 &10 &11 &11 &8.71e$-$10\\
  HS12       &&13 &9 &10 &10 &3.27e$-$09    &&9 &12 &13 &10 &2.50e$-$09\\
  HS13       &&13 &14 &15 &15 &2.57e$-$09   &&- &- &- &- &-\\
  HS14       &&13 &13 &14 &14 &8.95e$-$09   &&8 &9 &10 &9 &4.60e$-$09\\
  HS15       &&5 &22 &23 &23 &1.00e$-$10    &&11 &12 &13 &12 &7.53e$-$10\\
  HS16       &&13 &74 &75 &35 &4.11e$-$09   &&19 &22 &23 &20 &8.00e$-$10\\
  HS17       &&13 &35 &36 &26 &4.10e$-$09   &&13 &23 &24 &14 &8.65e$-$09\\
  HS18       &&14 &19 &20 &20 &4.96e$-$09   &&14 &14 &15 &15 &6.32e$-$11\\
  HS19       &&4 &45 &46 &46 &5.41e$-$09    &&18 &19 &20 &19 &6.13e$-$11\\
  HS20       &&4 &33 &34 &34 &5.03e$-$09    &&- &- &- &- &-\\
  HS21       &&13 &10 &11 &10 &8.84e$-$09   &&12 &15 &16 &13 &2.00e$-$10\\
  HS22       &&2 &5 &6 &6 &6.68e$-$09       &&10 &11 &12 &11 &1.00e$-$10\\
  HS23       &&13 &17 &18 &18 &4.10e$-$09   &&14 &14 &15 &15 &2.00e$-$09\\
  HS24       &&14 &10 &11 &11 &3.13e$-$09   &&15 &16 &17 &16 &2.00e$-$08 \\
  HS25       &&4 &67 &68 &22 &3.16e$-$09    &&53 &61 &62 &54 &2.34e$-$09\\
  HS26       &&4 &35 &36 &36 &3.70e$-$09    &&27 &136 &137 &128 &9.64e$-$09\\
  HS27       &&16 &55 &56 &55 &9.30e$-$09   &&28 &70 &71 &29 &6.26e$-$10\\
  HS28       &&14 &54 &55 &55 &1.37e$-$09   &&9 &10 &11 &10 &8.17e$-$09\\
  HS29       &&2 &8 &9 &9 &6.76e$-$09       &&14 &17 &18 &15 &3.54e$-$10\\
  HS30       &&12 &10 &11 &11 &7.07e$-$09   &&7 &7 &8 &8 &9.23e$-$10\\
  HS31       &&13 &15 &16 &14 &7.99e$-$09   &&9 &13 &14 &10 &1.92e$-$09\\
  HS32       &&13 &6 &7 &7 &8.23e$-$11      &&15 &16 &17 &16 &9.12e$-$09\\ 
  HS33       &&14 &61 &62 &41 &5.94e$-$09   &&- &- &- &- &-\\
  HS34       &&3 &18 &19 &16 &8.86e$-09$    &&17 &24 &25 &18 &8.00e$-$-10\\
  HS35       &&4 &10 &11 &11 &1.62e$-$09    &&12 &12 &13 &13 &4.00e$-$09\\
  HS36       &&4 &14 &15 &15 &2.76e$-$09    &&8 &8 &9 &9 &6.05e$-$10\\
  HS37       &&4 &11 &12 &12 &3.83e$-$10    &&7 &7 &8 &8 &4.61e$-$10\\
  HS38       &&3 &31 &32 &32 &2.35e$-$13    &&34 &40 &41 &35 &2.22e$-$09\\
  HS39       &&2 &19 &20 &20 &6.06e$-09$    &&13 &13 &14 &14 &4.28e$-$10\\
  HS40       &&3 &16 &17 &17 &5.47e$-09$    &&7 &7 &8 &8 &3.37e$-$11\\
  HS41       &&2 &21 &22 &21 &5.84e$-10$    &&16 &16 &17 &17 &4.01e$-$09\\
  HS42       &&3 &17 &18 &17 &4.28e$-$09    &&9 &11 &12 &10 &5.25e$-$10\\
  HS43       &&3 &11 &12 &12 &9.99e$-$09    &&13 &15 &16 &14 &5.25e$-$09 \\
  HS44       &&4 &17 &18 &18 &2.82e$-$09    &&- &- &- &- &-\\
  HS45       &&4 &19 &20 &18 &8.56e$-$09    &&20 &26 &27 &21 &2.00e$-$08\\ 
  HS47       &&4 &96 &97 &97 &8.50e$-$09    &&30 &36 &37 &31 &2.86e$-$09\\
  HS48       &&4 &60 &61 &61 &4.69e$-$09    &&10 &19 &20 &11 &1.62e$-$09\\ 
  HS49       &&4 &64 &65 &65 &7.95e$-$09    &&29 &32 &33 &30 &9.61e$-$09\\
  HS50       &&2 &16 &17 &14 &7.40e$-09$    &&14 &19 &20 &15 &7.68e$-$10\\
  HS51       &&2 &19 &20 &19 &7.39e$-$09    &&6 &8 &9 &7 &2.07e$-$10\\
  HS52       &&4 &18 &19 &19 &4.72e$-$09    &&9 &13 &14 &10 &7.64e$-$09\\
  HS53       &&2 &25 &26 &24 &3.69e$-$09    &&9 &10 &11 &10 &9.70e$-$10\\
  HS54       &&- &- &- &- &-    &&73 &88 &89 &74 &3.20e$-$09\\
  HS55       &&4 &61 &62 &62 &8.03e$-$09    &&13 &14 &15 &14 &5.00e$-$09\\
  HS56       &&2 &16 &17 &17 &9.80e$-$09    &&12 &16 &17 &13 &1.45e$-$10\\
  HS57       &&4 &17 &18 &18 &3.47e$-$10    &&24 &25 &26 &24 &1.54e$-$08 \\
  HS59       &&4 &45 &46 &44 &6.73e$-$09    &&20 &39 &40 &20 &4.09e$-$08 \\ 
  HS60       &&1 &14 &15 &15 &5.23e$-$09    &&11 &15 &16 &12 &2.36e$-$09\\
  HS61       &&4 &16 &17 &17 &4.08e$-$09    &&53 &378 &379 &54 &1.50e$-$09\\
  HS62       &&2 &32 &33 &24 &6.80e$-$09    &&11 &32 &33 &12 &2.32e$-$09\\
  HS63       &&3 &22 &23 &23 &7.62e$-$09    &&8 &8 &9 &9 &1.85e$-$09\\
  HS64       &&13 &21 &22 &22 &4.10e$-$09   &&53 &130 &131 &54 &2.48e$-$09\\
  HS65       &&3 &16 &17 &17 &9.89e$-$09    &&12 &13 &14 &13 &2.00e$-$10\\
  HS66       &&3 &13 &14 &13 &5.77e$-$09    &&15 &15 &16 &16 &4.00e$-$09\\
  %HS67       &&4 &493 &494 &277 &2.25e$-$09    &&  &   &    &   & e$-$09\\
  HS68       &&3 &176 &177 &118 &9.02e$-$09    &&47 &144 &145 &48 &5.85e$-$09\\
  HS69       &&1 &32 &33 &24 &5.26e$-$09    &&20 &30 &31 &21 &8.99e$-$09\\
  HS70       &&3 &27 &28 &20 &7.02e$-$09    &&38 &40 &41 &39 &6.40e$-$10\\
  HS71       &&2 &88 &89 &76 &7.92e$-$09    &&10 &11 &12 &11 &1.37e$-$09\\
  HS72       &&- &- &- &- &-    &&33 &39 &40 &34 &6.54e$-$11\\
  HS73       &&4 &23 &24 &23 &2.94e$-$09    &&11 &11 &12 &12 &9.21e$-$10\\
  HS74       &&- &- &- &- &-    &&13 &20 &21 &14 &3.13e$-$11\\
  HS75       &&- &- &- &- &-    &&10 &10 &11 &11 &4.16e$-$09\\
  HS76       &&4 &17 &18 &17 &8.73e$-$09    &&15 &30 &31 &16 &7.63e$-$09\\
  HS77       &&2 &25 &26 &24 &9.80e$-$09    &&15 &17 &18 &16 &2.08e$-$09\\
  HS78       &&2 &20 &21 &21 &9.28e$-$09    &&9 &10 &11 &10 &2.76e$-$13\\
  HS79       &&2 &18 &19 &19 &9.79e$-$09    &&14 &14 &15 &15 &1.09e$-$10\\
  HS80       &&3 &26 &27 &27 &4.80e$-$09    &&- &- &- &- &-\\
  HS81       &&2 &21 &22 &21 &6.08e$-$09    &&21 &23 &24 &22 &1.60e$-$10\\
  HS83       &&9 &25 &26 &26 &5.44e$-$09    &&13 &13 &14 &14 &4.92e$-$10\\
  HS84       &&- &- &- &- &-    &&24 &27 &28 &25 &6.04e$-$09\\
  HS85       &&- &- &- &- &-    &&- &- &- &- &-\\
  HS86       &&4 &20 &21 &21 &9.90e$-$09    &&15 &17 &18 &16 &2.87e$-$10\\
  HS87       &&4 &37 &38 &33 &7.20e$-$09    &&- &- &- &- &-\\
  HS88       &&2 &66 &67 &42 &2.68e$-$09    &&29 &42 &43 &30 &3.88e$-$11\\
  HS89       &&2 &19 &20 &20 &3.44e$-$09    &&35 &57 &58 &36 &5.54e$-$09\\
  HS90       &&2  &52 &53 &23 &5.42e$-$10    &&35 &52 &53 &36 &4.47e$-$09\\
  HS91       &&13 &67 &68 &36 &8.42e$-$10    &&36 &58 &59 &37 &8.80e$-$09\\
  HS92       &&2 &78 &79 &48 &6.60e$-$09    &&39 &58 &59 &40 &4.86e$-$11\\
  HS93       &&4 &28 &29 &23 &1.29e$-$09    &&24 &26 &27 &25 &2.17e$-$09\\
  HS95       &&5 &45 &46 &36 &5.06e$-$09    &&14 &15 &16 &15 &5.84e$-$11\\
  HS96       &&8 &161 &162 &116 &6.20e$-$09    &&11 &12 &13 &12 &5.84e$-$11\\
  HS97       &&4 &46 &50 &37 &9.92e$-$09    &&23 &26 &27 &24 &1.94e$-$11\\
  HS98       &&7 &69 &70 &57 &9.49e$-$09    &&23 &26 &27 &24 &1.17e$-$11\\
  HS99       &&- &- &- &- &-    &&20 &121 &122 &21 &4.00e$-$10\\
  HS100      &&1 &15 &16 &16 &9.16e$-$09    &&17 &36 &37 &18 &3.20e$-$09\\ 
  HS100MOD   &&10 &15 &16 &16 &5.19e$-$09   &&18 &35 &36 &19 &9.07e$-$09\\
  HS101       &&4 &554 &555 &328 &7.02e$-$09    &&- &- &- &- &-\\
  HS102       &&3 &302 &303 &214 &8.85e$-$09    &&- &- &- &- &-\\
  HS103       &&2 &199 &200 &158 &9.65e-09    &&74 &237 &238 &75 &2.90e$-$09\\
  HS104       &&4 &22 &23 &21 &2.88e$-$09    &&29 &36 &37 &30 &1.51e$-$09\\
  HS105       &&3 &22 &23 &18 &1.69e$-$09    &&48 &52 &53 &49 &7.94e$-$09\\
  HS106       &&- &- &- &- &-    &&51 &85 &86 &52 &1.13e$-$09\\
  HS107       &&8 &37 &38 &30 &9.17e$-09$    &&13 &15 &16 &14 &2.80e$-$09\\
  HS108       &&3 &24 &25 &23 &5.24e$-$09    &&- &- &- &- &-\\
  HS109       &&- &- &- &- &-    &&- &- &- &- &-\\
  HS110      &&4 &10 &11 &10 &2.95e$-$09    &&9 &15 &16 &9 &2.58e$-$08 \\
  HS111      &&2 &54 &55 &44 &5.96e$-$09    &&51 &80 &81 &52 &3.74e$-$09\\
  HS112      &&4 &91 &92 &77 &9.94e$-$09    &&27 &37 &38 &28 &2.30e$-$09\\
  HS113      &&3 &21 &22 &22 &4.03e$-$09    &&16 &18 &19 &17 &2.18e$-$09\\
  HS114       &&- &- &- &- &-    &&34 &41 &42 &35 &2.09e$-$09\\
  HS116       &&- &- &- &- &-    &&- &- &- &- &-\\
  HS117      &&4 &62 &63 &60 &5.77e$-$09    &&20 &20 &21 &21 &8.48e$-$09\\
  HS118       &&14 &26 &27 &27 &3.16e$-$09    &&34 &37 &38 &35 &3.48e$-$10\\
  HS119       &&15 &29 &30 &30 &3.78e$-$09    &&30 &30 &31 &31 &3.38e$-$09\\
  \hline
   total      && 556    & 3137	&3233  & 2717 &	 && 1822 & 3049	& 3142	 & 2012	\\
   \hline
   average    &&  6    & 34 	&35  &   29  &	 && 20 & 33	& 34 	& 22 \\ 
   \hline
\end{longtable}
}
\end{center}

\begin{figure}[htbp]
  \centering
  % 第一行：两个minipage（各占45%页面宽度）
  \begin{minipage}{0.45\textwidth}
    \centering
    \includegraphics[width=\textwidth]{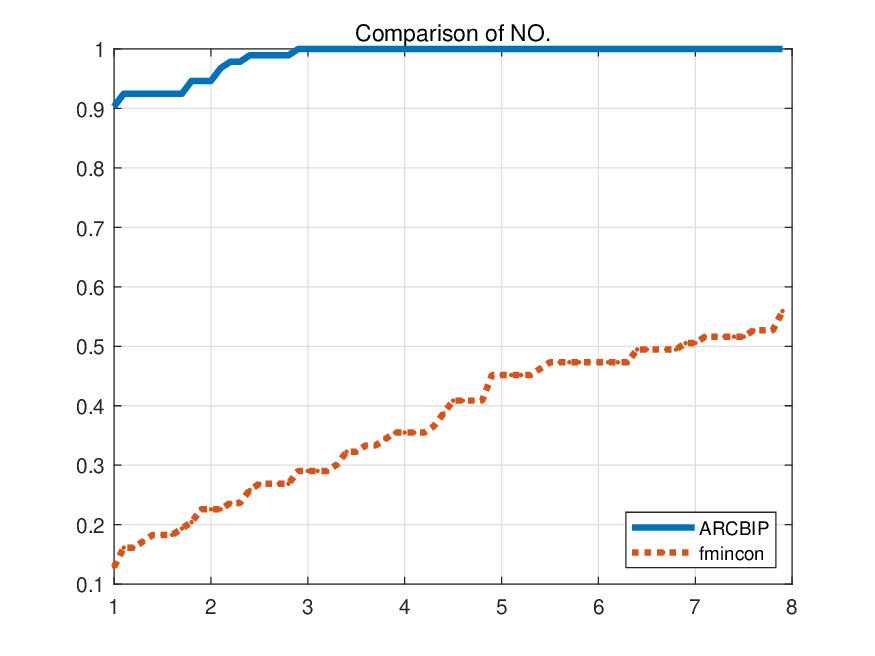}
  \end{minipage}
  \hfill % 两列之间自动填充空白（可替换为\hspace{10pt}固定间距）
  \begin{minipage}{0.45\textwidth}
    \centering
    \includegraphics[width=\textwidth]{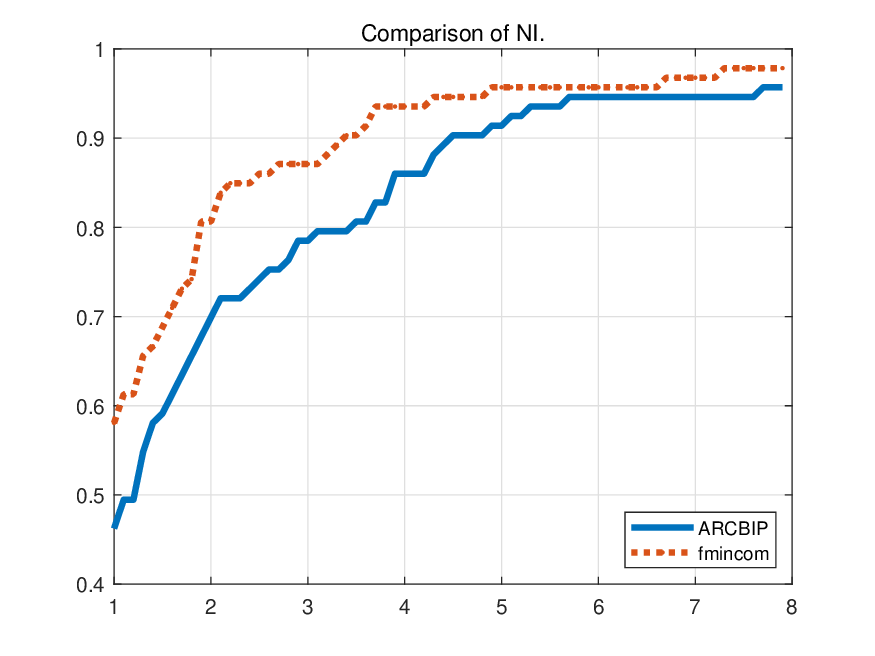}
  \end{minipage}
  
  \vspace{10pt} % 行间距（垂直空白），可调整
%  
  % 第二行：两个minipage
  \begin{minipage}{0.45\textwidth}
    \centering
    \includegraphics[width=\textwidth]{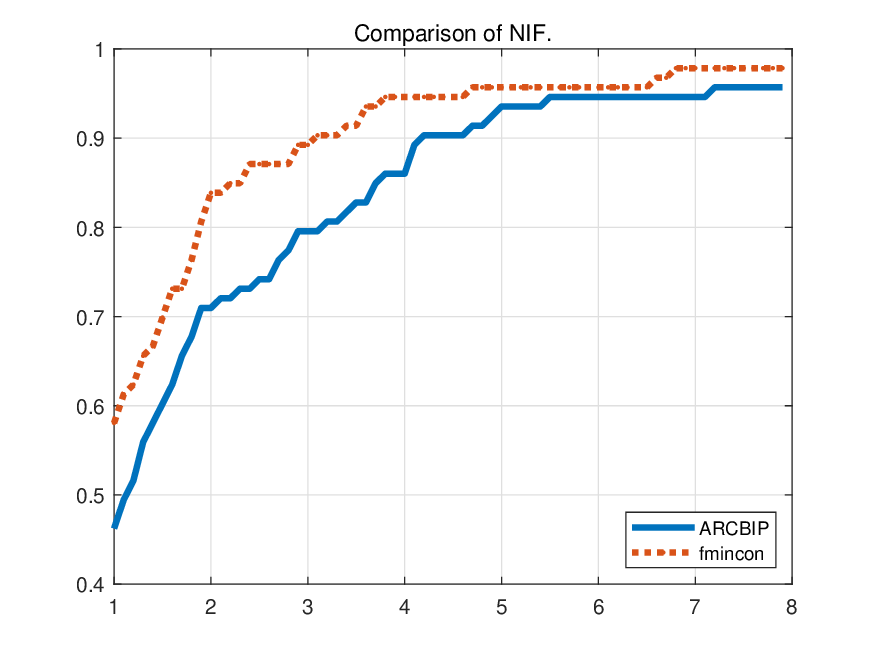}
  \end{minipage}
  \hfill
  \begin{minipage}{0.45\textwidth}
    \centering
    \includegraphics[width=\textwidth]{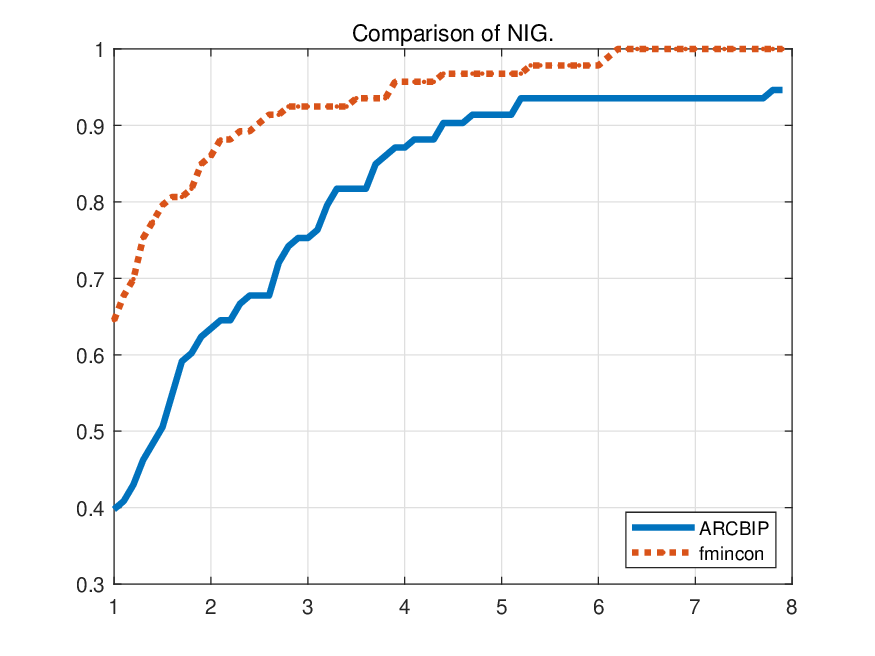}
  \end{minipage}
  
  % 可选：整体标题
\caption{Performance profiles based on Table \ref{numercal table} on $NO$, $NI$, $NIF$, and $NIG$ (logarithmic scale for the performance ratio).}\label{Figure1}
   %\caption{Performance profiles based on Table \ref{numercal table} on $NIF$  and $NIG$.}\label{Figure1}
\end{figure}

%\begin{figure}[htbp]
%  \centering
%%  % 第一行：两个minipage（各占45%页面宽度）
%%  \begin{minipage}{0.45\textwidth}
%%    \centering
%%    \includegraphics[width=\textwidth]{compnofmin.eps}
%%  \end{minipage}
%%  \hfill % 两列之间自动填充空白（可替换为\hspace{10pt}固定间距）
%%  \begin{minipage}{0.45\textwidth}
%%    \centering
%%    \includegraphics[width=\textwidth]{compnifmin.eps}
%%  \end{minipage}
%%  
%%  \vspace{10pt} % 行间距（垂直空白），可调整
%%  
%  % 第二行：两个minipage
%  \begin{minipage}{0.45\textwidth}
%    \centering
%    \includegraphics[width=\textwidth]{compniffmin.eps}
%  \end{minipage}
%  \hfill
%  \begin{minipage}{0.45\textwidth}
%    \centering
%    \includegraphics[width=\textwidth]{compnigfmin.eps}
%  \end{minipage}
%  
%  % 可选：整体标题
%  %\caption{Performance profiles based on Table \ref{numercal table} on $NO$, $NI$, $NIF$, and $NIG$.}
%   \caption{Performance profiles based on Table \ref{numercal table} on $NIF$  and $NIG$.}
%  \label{Figure1}
%\end{figure}

\begin{center}
	{
\setlength{\tabcolsep}{1pt}
		\begin{longtable}{cccccccccccccccccccccc}%[H]
			\caption{Comparison results II}\label{table4}
			\endfirsthead
			\multicolumn{17}{c}{Table \ref{table4} continued}\\
			\hline
			\multirow{2}{*}{Problem}  &&\multicolumn{3}{c}{ARCBIP }&&\multicolumn{3}{c}{Alg. 3.6 \cite{Liu2020}}&&\multicolumn{3}{c}{IPOPT}&&\multicolumn{3}{c}{fmincon} \\
			\cmidrule{3-5}%\cline{2-3}
			\cmidrule{7-9}%\cline{5-7}
            \cmidrule{11-13}%\cline{9-11}
            \cmidrule{15-17}%\cline{13-15}
			                     & &$NI$&$NIF$&$NIG$      &&$NI$&$NIF$&$NIG$        & &$NI$&$NIF$&$NIG$  &&$NI$&$NIF$&$NIG$& \\
			\hline
			\endhead
			\hline
			\multirow{2}{*}{Problem}  & &\multicolumn{3}{c}{ ARCBIP}&&\multicolumn{3}{c}{Alg. 3.6 \cite{Liu2020}}&&\multicolumn{3}{c}{IPOPT}&&\multicolumn{3}{c}{fmincon} \\
		   \cmidrule{3-5}%\cline{2-3}
		   \cmidrule{7-9}%\cline{5-7}
           \cmidrule{11-13}%\cline{9-11}
           \cmidrule{15-17}%\cline{13-15}
			                       & &$NI$&$NIF$&$NIG$      &&$NI$&$NIF$&$NIG$        & &$NI$&$NIF$&$NIG$  &&$NI$&$NIF$&$NIG$& \\
			\hline
  CB2      &&11 &12 &12 &&8 &9 &9 &&8 &9 &9 &&14 &15 &14  \\
  CB3      &&11 &12  &12 &&9 &10 &10 &&9 &10 &10 &&13 &14 &14 \\
  CHACONN1 &&11 &12  &12  &&8 &9 &9 &&8 &9 &9 &&13 &14 &14\\
  CHACONN2 &&11 &12 &12  &&9 &10 &10 &&8 &9 &9 &&13 &14 &14\\ 
  CONGIGMZ &&21 &22  &18  &&36 &69 &37 &&34 &42 &35  &&48 &49 &35\\ 
  DEMYMALO &&13 &14  &14  &&17 &25 &18 &&13 &14 &14 &&14 &15 &15\\ 
  DIPIGRI  &&12 &13 &13  &&24 &88 &25 &&12 &24 &13  &&36 &37 &18\\ 
  EXPFITA  &&17 &18  &18  &&30 &106 &31 &&18 &19 &19 &&46 &47 &47\\ 
  GIGOMEZ1 &&15 &16  &16  &&16 &43 &17 &&16 &17 &17 &&17 &18 &15\\
  GIGOMEZ2 &&15 &16 &16 &&8 &10 &9 &&9 &10 &10 &&13 &14 &14\\ 
  GIGOMEZ3 &&11 &12 &12 &&13 &28 &14  &&8 &9 &9 &&14 &15 &15\\ 
  GOFFIN   &&6 &7  &6 &&106 &297 &107 &&6 &7 &7 &&17 &18 &18\\  
  HAIFAS   &&16 &17  &16 &&12 &21 &13 &&12 &13 &13 &&166 &167 &117 \\ 
  HALDMADS &&25 &26 &25 &&15 &34 &16 &&46 &54 &41  &&38 &39 &34\\
  HS10     &&10 &11 &11 &&11 &12 &12 &&13 &14 &14 &&16 &17 &17\\ 
  HS11     &&8 &9 &9 &&7 &8 &8 &&8 &9 &9  &&10 &11 &11\\ 
  HS12     &&9 &10 &10 &&12 &36 &13 &&9 &10 &10 &&12 &13 &10\\
  HS14     &&13 &14 &14 &&12 &36 &13 &&7 &8 &8 &&9 &10 &9\\
  HS22     &&5 &6 &6 &&8 &10 &9 &&6 &7 &7 &&11 &12 &11\\ 
  HS29     &&8 &9 &9 &&35 &142 &36  &&8 &9 &9  &&17 &18 &15\\
  HS43     &&11 &12 &12 &&14 &18 &15 &&14 &18 &15 &&15 &16 &14\\ 
  HS100    &&15 &16 &16 &&24 &88 &25 &&9 &10 &10 &&36 &37 &18\\ 
  HS100MOD &&15 &16 &16 &&54 &263 &55 &&11 &29 &12 &&35 &36 &19\\
  HS113    &&21 &22 &22 &&54 &150 &55 &&11 &12 &12 &&18 &19 &17\\ 
  KIWCRESC &&12 &13  &13 &&13 &27 &14 &&9 &11 &10 &&13 &14 &14\\
  MADSEN   &&14 &15 &15 &&16 &19 &17  &&19 &20 &20  &&19 &20 &16\\
  MAKELA1  &&12 &13 &13 &&10 &22 &11 &&14 &15 &15  &&14 &15 &13\\
  MAKELA2  &&25 &26 &24 &&16 &41 &17 &&7 &8 &8 &&14 &15 &15\\
  MAKELA3  &&21 &22 &22 &&331 &365 &332  &&17 &19 &18 &&44 &45 &35\\
  MAKELA4  &&5 &6 &6 &&58 &354 &59 &&9 &11 &10  &&39 &40 &40\\  
  MIFFLIN1 &&11 &12 &12 &&7 &8 &9 &&6 &7 &7  &&12 &13 &12\\  
  MIFFLIN2 &&13 &14 &14 &&47 &227 &48 &&15 &16 &16 &&18 &19 &15\\  
  PENTAGON &&14 &15  &15 &&17 &26 &18  &&14 &15 &15 &&- &- &-\\
  POLAK1   &&11 &12 &12 &&165 &1569 &166 &&7 &8 &8 &&18 &19 &19\\
  POLAK3   &&22 &23 &21 &&22 &26 &23 &&- &- &- &&71 &72 &70\\
  POLAK5   &&30 &31 &29 &&7 &9 &8 &&31 &32 &32 &&- &- &-\\ 
  ROSENMMX &&35 &36 &36 &&31 &79 &32 &&16 &18 &17 &&30 &31 &25 \\ 
  S268     &&16 &17 &17 &&21 &106 &22 &&19 &20 &20 &&37 &38 &30\\
  SPIRAL   &&103 &104  &79 &&146 &319 &147 &&- &- &- &&160 &161 &109\\
  TFI1     &&31 &32  &30 &&34 &60 &35 &&81 &210 &82  &&60 &61 &43\\
  TFI3     &&26 &27 &27 &&147 &554 &148 &&16 &18 &17 &&23 &24 &24\\
  WOMFLET  &&9 &10  &10 &&23 &116 &24 &&9 &10 &10 &&75 &76 &37\\
  \hline
total   &&551  &589	&578	&&1461	&5069	&1500		&&547	&763	&579		&&1057	&1095	&863\\
\hline
average &&15	&16	&16		&&39	&134	&40		&&15	&20	&16		&&28	&29	&23\\ 
\hline
		\end{longtable}
	}
\end{center}

%\begin{figure}[htbp]
%  \centering
%  % Requires \usepackage{graphicx}
%  \includegraphics[width=6.5cm,height=5.2cm]{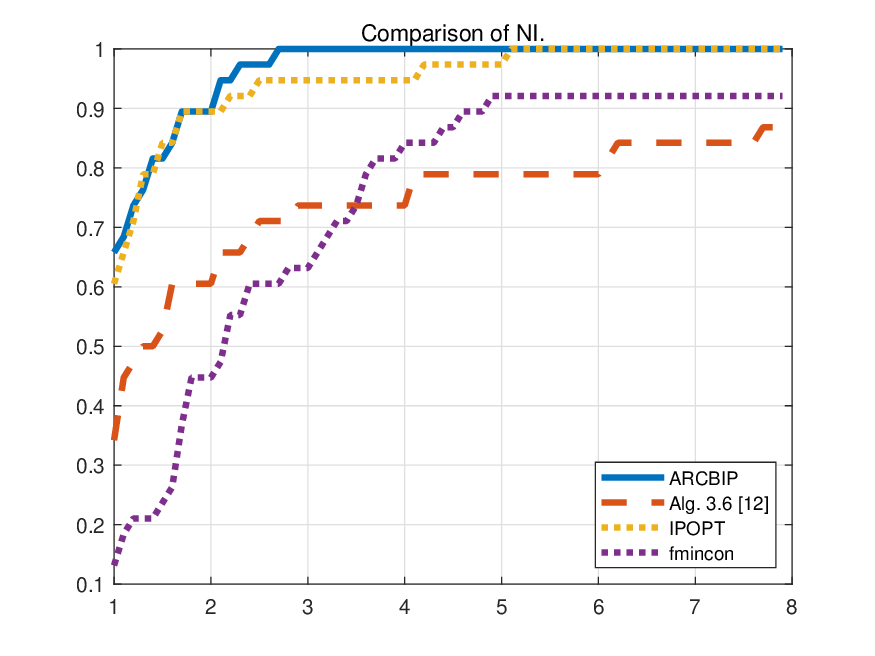}\\
%  \quad
%  \includegraphics[width=6.5cm,height=5.2cm]{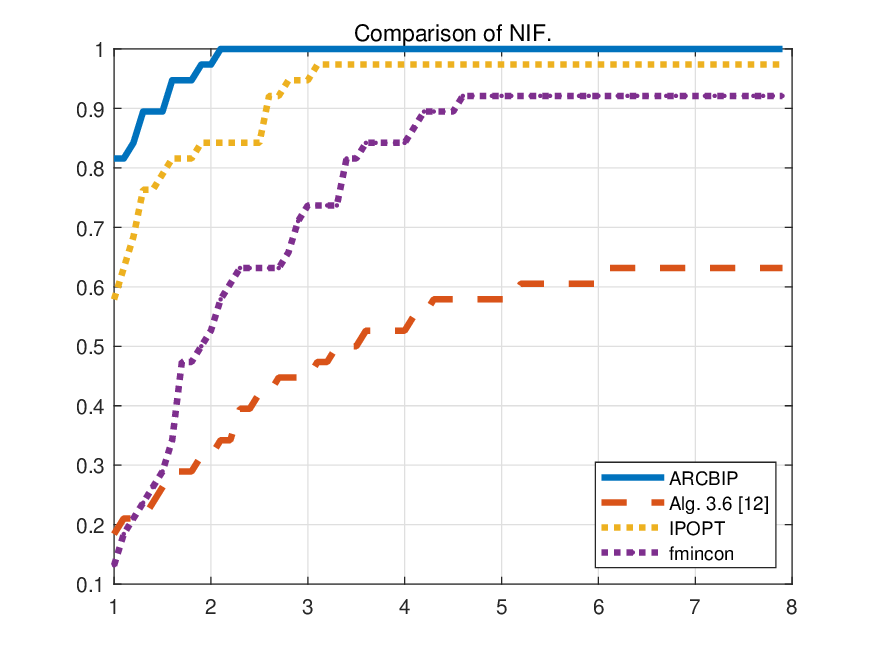}\\[0pt]
%  \quad
%  \includegraphics[width=6.5cm,height=5.2cm]{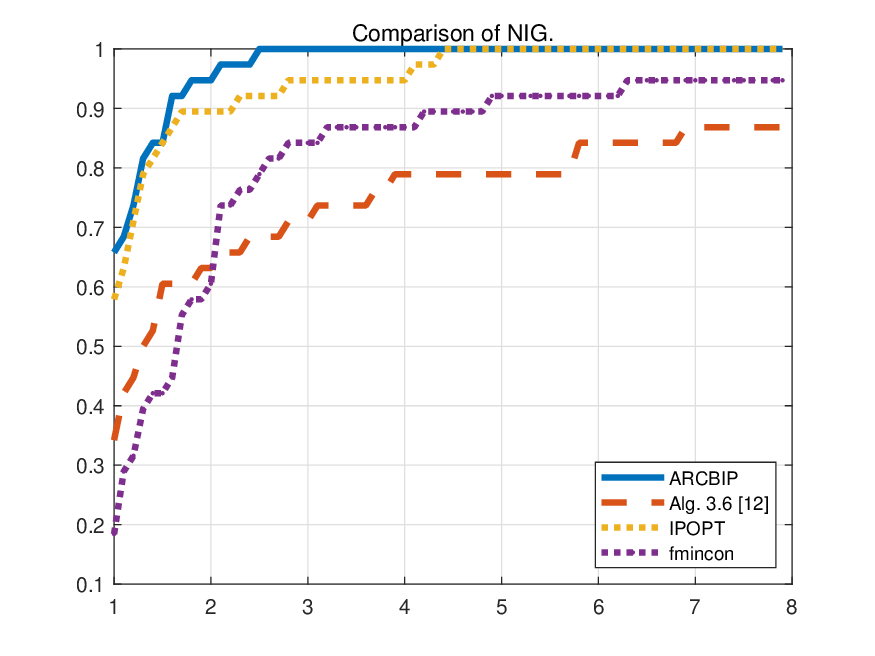}\\[0pt]
%  \caption{ Performance profiles based on Table \ref{table4} on $NI$ (the first), $NIF$ (the second), and $NIG$ (the third), respcetively.}\label{Figure2}
%\end{figure}%Alg. 3.6 \cite{Liu2020}}&&\multicolumn{3}{l}{IPOPT}&&\multicolumn{3}{l}{fmincon %compared with Alg. 3.6 \cite{Liu2020}, IPOPT and fmincon 
\begin{figure}[htbp]
  \centering
    \begin{minipage}{0.32\textwidth}
    \centering
    \includegraphics[width=\textwidth]{compNI.eps}
  \end{minipage}
  \hfill
  \begin{minipage}{0.32\textwidth}
    \centering
    \includegraphics[width=\textwidth]{compNIF.eps}
  \end{minipage}
  \hfill
  \begin{minipage}{0.32\textwidth}
    \centering
    \includegraphics[width=\textwidth]{compNIG.eps}
  \end{minipage}
  \caption{ Performance profiles based on Table \ref{table4} on $NI$,  $NIF$,  and $NIG$ (logarithmic scale for the performance ratio).}\label{Figure2}
\end{figure}%Alg. 3.6 \cite{Liu2020}}&&\multicolumn{3}{l}{IPOPT}&&\multicolumn{3}{l}{fmincon %compared with Alg. 3.6 \cite{Liu2020}, IPOPT and fmincon 
%\begin{figure}[htbp]
%  \centering
%  \begin{minipage}{0.45\textwidth}
%    \centering
%    \includegraphics[width=\textwidth]{compNIF.eps}
%  \end{minipage}
%  \hfill
%  \begin{minipage}{0.45\textwidth}
%    \centering
%    \includegraphics[width=\textwidth]{compNIG.eps}
%  \end{minipage}
%  \caption{ Performance profiles based on Table \ref{table4} on  $NIF$  and $NIG$.}\label{Figure2}
%\end{figure}%Alg. 3.6 \cite{Liu2020}}&&\multicolumn{3}{l}{IPOPT}&&\multicolumn{3}{l}{fmincon %compared with Alg. 3.6 \cite{Liu2020}, IPOPT and fmincon 
%%%%%%%%%%%%%%%%%%%%%%%%%%%%%%%%%%%%%%%%%%%%
\section{Conclusion}
\label{sec6}
We have introduced an implementable adaptive regularisation algorithm using cubics based on interior-point methods (ARCBIP) for solving nonlinear inequality constrained optimization.
The fraction to boundary rule is used to bound the slack variables away from zero, so that the algorithm does not stop too quickly on a solution with insufficient accuracy. In addition, ARCBIP is based on a primal version, while it also permits primal-dual steps to ensure that the primal and dual iterates to be interior-points. Since it is not easy to find an exact solution of the subproblem, we compute approximate solution and propose conditions that the approximate solution should satisfy. Unlike interior-point algorithms based on line search frameworks, which usually require that the Lagrangian Hessian or its approximation is uniformly positive definite on the null space of the Jacobian of active constraints to guarantee the descent property of the search directions, ARCBIP does not need positive definiteness of Lagrangian Hessian to ensure the global convergence. 
Preliminary numerical results and some comparison results are presented to demonstrate the performance of ARCBIP. 

One of our next goals is to analyze evaluation-complexity of ARCBIP. 
% which can help us make a comparison with trust region SQP methods theoretically. 
The other is to reduce the number of subproblems to be computed because two subproblems need to be solved in the inner loop of ARCBIP. Next we aim to design a method that only requires solving one subproblem. For example, instead of using the composite step method, we can try to develop an efficient algorithm to directly solve subproblem \eqref{cubicsubp}. 

%This is quite likely to be achievable because we do not need to consider the inherent problems (the incompatibility of the intersection of linearized constraints with trust-region bounds) in the trust region SQP method.
%
%However, there is a 
%Cubic regularization offers an easy way to avoid difficulties resulting from the incompatibility of the intersection of linearized constraints with trust-region bounds in constrained optimization. 
%As we know, there exists a perennial problem that arises from the incompatibility of the intersection of linearized constraints with trust-region bounds in traditional trust-region SQP methods for solving constrained optimization.  
%
%Cubic regularisation in problem \eqref{rebarrsqp} can avoid this difficulty.
%%However, this paper still lacks analysis of local convergence rate. This is our future research.  

\begin{acknowledgement}
The authors are very grateful to the editor and the anonymous referees, whose valuable suggestions and insightful comments helped to improve significantly the paper. The authors gratefully acknowledge the support of the Natural Science Foundation of Henan Province (252300421993), the National Natural Science Foundation of China (12071133), Key Scientific Research Project for Colleges and Universities in Henan Province (25B110005).
\end{acknowledgement}

\begin{dataa}
 There is no data generated or analysed in this paper.
\end{dataa}

\section*{Declarations}
\small  The authors declare that they have no conflict of interest.

\begin{appendices}
%\appendix
\section{Proofs of lemmas}\label{secA1}

\subsection{Proof of Lemma \ref{ykbound}}\label{le1}
\begin{proof}
  Assumption (AS2) implies that $ \{f_k \} $ is bounded below and $ \{g_k \} $ is bounded. Suppose that $\gamma_{ub}$ represents the upper bound of $ \{-f_k\} $ and $\| g_k \|$. We have logarithmic inequalities that
  \begin{equation}\label{lny}
    \sum_{i=1}^m \ln y_k^{(i)} \leq m \ln \|y_k\|_{\infty} \leq m \ln \|y_k\|.
  \end{equation}
   From \eqref{lny}, the facts that $\text{pred}_k$ is non-negative and that the sequence $\{\nu_k\}$ is monotonic non-decreasing, expanding the right side of the inequality in (\ref{tildephi}) and summing up, it can be inferred that
  \begin{equation}\label{uptildephi}
    \tilde{\phi}(x_k, y_k; \nu_k) \leq \tilde{\phi}( x_0 , y_0 ; \nu_0) +\left( \frac{1}{\nu_0}-\frac{1}{\nu_k} \right) (\gamma_{ub} + \mu m \max_{0 \leq i \leq k} \ln \| y_i \|). \nonumber
  \end{equation}
  Conversely, using the definition of $\tilde{\phi}$, we can also obtain that
    \begin{equation}\label{downtildephi}
    \tilde{\phi}( x_k, y_k; \nu_k) \geq - \frac{1}{\nu_k} ( \gamma_{ub} + \mu m \ln \| y_k \| ) + \| y_k \| - \| g_k \| \nonumber
  \end{equation}
  for any $k$. Now, we define the indices $l_i$ such that the condition $ \| y_{l_i} \| = \max_{k \leq l_i} \| y_k \| $ is satisfied. Then combining the above two inequalities for $k$ given by any such $l_i$ we have that
  \begin{equation}
  \begin{aligned}
    &~- \frac{1}{\nu_{l_i}} ( \gamma_{ub} + \mu m \ln \| y_{l_i} \| )  + \| y_{l_i} \| - \| g_{l_i} \|  
    \leq \tilde{\phi}( x_0 , y_0 ; \nu_0 ) +\left( \frac{1}{\nu_0} - \frac{1}{\nu_{l_i}} \right) ( \gamma_{ub} + \mu m \ln \| y_{l_i} \| ) \nonumber
  \end{aligned}  
  \end{equation}
  and thus
  \begin{equation}\label{upyli}
    \| y_{l_i} \| \leq \tilde{\phi}( x_0 , y_0 ; \nu_0 ) + \cfrac{1}{\nu_0} ( \gamma_{ub} + \mu m \ln \| y_{l_i} \| ) +\gamma_{ub} . \nonumber
  \end{equation}
  Since the ratio $ (\ln \| y \| / \| y \|) $ tends to $0$ when $ \| y \| \rightarrow \infty $, 
 % relation (\ref{upyli}) moves a $ \| y_{l_i} \| $ to the right, and the right side of the inequality becomes a constant, 
  $ \{ y_{l_i} \} $ must be bounded. By definition of the indices $ l_i $, we conclude that the whole sequence $ \{ y_k \} $ is bounded.
\end{proof}

\subsection{Proof of Lemma \ref{npredlow}}\label{le2}
\begin{proof}
If $u_k^c = 0$, it is obviously true from \eqref{ukc} that the right hand side of the inequality (\ref{npredboun}) is zero.  

If $u_k^c \neq 0$, by the change of variables (\ref{u}) and the normal Cauchy decrease condition problem \eqref{NCauchypoi}, the scalar $ \alpha_k^c $ is a solution of
\begin{subequations}\label{NCu}
  \begin{align}
   \underset{\alpha \geq 0}{\text{minimize}} \quad &~ \cfrac{1}{2} \| g_k+ y_k + \alpha (A_k^T u_x^c + Y_k u_y^c)\|^2 +\frac{1}{3} \widetilde{\sigma}_k \left\|\alpha u_k^c \right\|^3\\
   \text{subject to} \quad &~ \alpha  \leq - \frac{\xi \tau }{\left(u_y^c\right)^{(i)}} \quad \text{for all } i \text{ such that } \left(u_y^c\right)^{(i)} < 0
\end{align}
\end{subequations}
with $u_k^c := \begin{pmatrix}
                 u_x^c \\
                 u_y^c 
               \end{pmatrix}$.
The upper bounds of problem \eqref{NCu} are also satisfied if
%\begin{equation}
  $\alpha  \leq  \frac{\xi \tau }{\left\|u_k^c\right\|}.$% \nonumber
%\end{equation}

From the definition (\ref{ukc}) of $u_k^c$, the normal Cauchy decrease condition (\ref{NCauchycon}), the Cauchy-Schwarz inequality and the fact that $ a^2 -b^2 \leq 2 a (a-b) $, it can be inferred that
%\begin{subequations}
  \begin{eqnarray}\label{negnpred}
    &     & - \| g_k + y_k \| \text{npred}_k(n_k) \nonumber \\
    &\leq & -\gamma_n \| g_k + y_k \| \text{npred}_k(\alpha_k^c n_k^c) \nonumber  \\
    &  =  & -\gamma_n \| g_k + y_k \| \left[\left\| g_k+y_k \right\| - \left\| g_k+y_k + \alpha_k^c \left(A_k^T n_x^c +n_y^c\right) \right\| \right] \nonumber  \\
    &     & +\frac{1}{3} (\alpha_k^c)^3 \gamma_n \widetilde{\sigma}_k \| g_k + y_k \| \left\| D_k n_k^c \right\|^3 \nonumber  \\
    &\leq & -\frac{\gamma_n}{2} \left[\left\| g_k+y_k \right\|^2- \left\| g_k+y_k + \alpha_k^c \left(A_k^T u_x^c + Y_k u_y^c \right) \right\|^2 \right] \nonumber  \\
    &     & + \frac{1}{3} (\alpha_k^c)^3 \gamma_n \widetilde{\sigma}_k \| g_k + y_k \| \left\| u_k^c \right\|^3 \nonumber  \\
    &  =  & -\frac{\gamma_n}{2}\left[ -(\alpha_k^c)^2 \left\| A_k^T u_x^c + Y_k u_y^c \right\|^2 -2 \alpha_k^c (g_k+y_k)^T \left(A_k^T u_x^c + Y_k u_y^c\right) \right] \nonumber  \\ 
    &     & +\frac{1}{3} (\alpha_k^c)^3 \gamma_n \widetilde{\sigma}_k \| g_k + y_k \| \left\| u_k^c \right\|^3 \nonumber  \\
    &  =  & \gamma_n \left( -\alpha_k^c \left\| u_k^c \right\|^2 + \frac{1}{2} (\alpha_k^c)^2 \left\| 
    \begin{pmatrix}
      A_k^T & Y_k 
    \end{pmatrix}
     u_k^c \right\|^2 + \frac{1}{3} (\alpha_k^c)^3 \widetilde{\sigma}_k \| g_k + y_k \| \left\| u_k^c \right\|^3 \right) \nonumber  \\
    &\leq & \gamma_n \alpha_k^c \left\| u_k^c \right\|^2 \left( -1 + \frac{1}{2} \alpha_k^c \left\| 
    \begin{pmatrix}
      A_k^T & Y_k 
    \end{pmatrix}
    \right\|^2 + \frac{1}{3} (\alpha_k^c)^2 \widetilde{\sigma}_k \| g_k + y_k \|  \left\| u_k^c \right\| \right).
  \end{eqnarray}
%\end{subequations}
Hence we can obtain from \eqref{negnpred} that $ \text{npred}_k (n_k) \geq 0 $ provided
\begin{equation} 
-1 + \cfrac{1}{2} \alpha_k^c \left\| 
 \begin{pmatrix}
  A_k^T & Y_k 
 \end{pmatrix}
 \right\|^2 + \cfrac{1}{3} (\alpha_k^c)^2 \widetilde{\sigma}_k \| g_k + y_k \|  \left\| u_k^c \right\| \leq 0 \nonumber
 \end{equation}
 since $\alpha_k^c \geq 0$.
The above inequality is equivalent to $\alpha_k^c \in  \left[ 0, \bar{\alpha}_k \right]$, where
 \begin{equation}
   \bar{\alpha}_k := \frac{3\left[ -\cfrac{1}{2} \left\| 
   \begin{pmatrix}
      A_k^T & Y_k 
    \end{pmatrix}
    \right\|^2 + \sqrt{\cfrac{1}{4} \left\| 
    \begin{pmatrix}
      A_k^T & Y_k 
    \end{pmatrix}
    \right\|^4 +\cfrac{4}{3} \widetilde{\sigma}_k \| g_k + y_k \|  \left\| u_k^c \right\| } \right]}{2\widetilde{\sigma}_k \| g_k + y_k \|  \left\| u_k^c \right\|} .  \\ \nonumber
 \end{equation}
 Furthermore, we can express $\bar{\alpha}_k$ as
 \begin{equation}
   \bar{\alpha}_k = 2 \left[ \frac{1}{2} \left\| 
   \begin{pmatrix}
      A_k^T & Y_k 
    \end{pmatrix}
    \right\|^2 + \sqrt{\frac{1}{4} \left\| 
    \begin{pmatrix}
      A_k^T & Y_k 
    \end{pmatrix}
    \right\|^4 +\frac{4}{3} \widetilde{\sigma}_k \| g_k + y_k \|  \left\| u_k^c \right\| }  \right]^{-1}. \nonumber
 \end{equation}
 Let
 \begin{equation}\label{omegak}
   \omega_k := \left[ \sqrt{2} \max \left\{\left\| 
    \begin{pmatrix}
      A_k^T & Y_k 
    \end{pmatrix}
    \right\|^2 , 2 \sqrt{\widetilde{\sigma}_k \| g_k + y_k \|  \left\| u_k^c \right\|} \right\}\right ]^{-1}.
 \end{equation}
 From the inequalities 
 \begin{equation}
   \begin{aligned}
     &~ \sqrt{\frac{1}{4} \left\| 
     \begin{pmatrix}
       A_k^T & Y_k 
     \end{pmatrix}
     \right\|^4 +\frac{4}{3} \widetilde{\sigma}_k \| g_k + y_k \|  \left\| u_k^c \right\| } 
     \leq  \cfrac{1}{2} \left\| 
     \begin{pmatrix}
       A_k^T & Y_k 
     \end{pmatrix}
     \right\|^2 + \cfrac{2}{\sqrt{3}} \sqrt{\widetilde{\sigma}_k \| g_k + y_k \|  \left\| u_k^c \right\|} \\
     \leq &~ 2 \max \left\{ \frac{1}{2} \left\| 
    \begin{pmatrix}
      A_k^T & Y_k 
    \end{pmatrix}
    \right\|^2 , \frac{2}{\sqrt{3}} \sqrt{\widetilde{\sigma}_k \| g_k + y_k \|  \left\| u_k^c \right\|} \right\} \\
     \leq &~ \sqrt{2} \max \left\{ \left\| 
    \begin{pmatrix}
      A_k^T & Y_k 
    \end{pmatrix}
    \right\|^2 , 2 \sqrt{\widetilde{\sigma}_k \| g_k + y_k \|  \left\| u_k^c \right\|} \right\} \nonumber
    \end{aligned}
 \end{equation}
 obtained by $\sqrt{a^2 + b^2} \leq a + b (a,b \geq 0)$ and 
 \begin{equation}
    \frac{1}{2} \left\| 
    \begin{pmatrix}
      A_k^T & Y_k 
    \end{pmatrix}
    \right\|^2 \leq \sqrt{2} \max \left\{ \left\| 
    \begin{pmatrix}
      A_k^T & Y_k 
    \end{pmatrix}
    \right\|^2 , 2 \sqrt{\widetilde{\sigma}_k \| g_k + y_k \|  \left\| u_k^c \right\|} \right\}, \nonumber
 \end{equation}
 it follows that $ 0 < \omega_k \leq \bar{\alpha}_k $.
 Also due to the upper bounds of $\alpha$, and replacing $\alpha$ in (\ref{negnpred}) with $ \omega_k $, we have that
 \begin{eqnarray}\label{negnpred2}
    & & - \| g_k + y_k \|  \text{npred}_k(n_k) \nonumber\\ 
    &\leq & \gamma_n \left\| u_k^c \right\|^2 \left( -1 + \frac{1}{2} \omega_k \left\| 
    \begin{pmatrix}
      A_k^T & Y_k 
    \end{pmatrix}
    \right\|^2 + \frac{1}{3} \omega_k^2 \widetilde{\sigma}_k \| g_k + y_k \|  \left\| u_k^c \right\| \right) \nonumber \\
    & & \min \left\{ \left[ \sqrt{2} \max \left\{\left\| 
    \begin{pmatrix}
      A_k^T & Y_k 
    \end{pmatrix}
    \right\|^2 , 2 \sqrt{\widetilde{\sigma}_k \| g_k + y_k \|  \left\| u_k^c \right\|} \right\}\right ]^{-1} , \; \frac{\xi \tau }{\left\|u_k^c\right\|} \right\}.
 \end{eqnarray} 
It now follows from \eqref{omegak} that $\omega_k \left\| 
\begin{pmatrix}
  A_k^T & Y_k 
\end{pmatrix}
\right\|^2 \leq 1$ and $ \omega_k^2 \widetilde{\sigma}_k \| g_k + y_k \|  \left\| u_k^c \right\|  \leq 1  $. Hence the expression in the round brackets in \eqref{negnpred2}
%$ -1 + \cfrac{1}{2} \omega_k \left\| 
%\begin{pmatrix}
%  A_k^T & Y_k 
%\end{pmatrix}
%\right\|^2 + \cfrac{1}{3} \omega_k^2 \widetilde{\sigma}_k \| g_k + y_k \|  \left\| u_k^c \right\| $
is bounded above by $-1/6$. This and (\ref{negnpred2}) imply the inequality in (\ref{npredboun}).
\end{proof}

\subsection{Proof of Lemma \ref{treduplemma}}\label{le3}
\begin{proof}
  By problem \eqref{TCauchypoi} in the tangential Cauchy decrease condition, the scalar $ \beta_k^c $ is a solution of
  \begin{subequations}\label{TCau}
  \begin{align}
    \underset{\beta \geq 0}{\text{minimize}} \; &~ -\text{tpred}_k (\beta N_k p_k^c) = 
     \left(\nabla f_k + B_k n_x\right)^T \beta N_x p_k^c  - \mu \left( Y^{-1}_k e - Y_k^{-2} n_y \right)^T \beta N_y p_k^c \nonumber \\
    &~ + \frac{1}{2} \beta^2 (p_k^c)^T (N_x^T B_k N_x + \mu N_y^T Y^{-2}_k N_y) p_k^c +\frac{1}{3} \beta^3 \sigma_k \|D_k N_k p_k^c\| ^3  \nonumber  \\  
    \text{subject to} \; &~ \beta \leq \frac{-\tau + (Y^{-1}_k n_y)^{(i)}}{(Y^{-1}_k N_y p_k^c)^{(i)}}, \text{ for all } i \text{ such that } (Y^{-1}_k N_y p_k^c)^{(i)} < 0. \nonumber
  \end{align}
\end{subequations}
Since the normal subproblem ensures that $ (Y_k^{-1} n_y)^{(i)} \geq -\xi \tau $, it follows from the definition of the Euclidean norm that
\begin{equation}\label{betaup}
  \beta \leq \cfrac{( 1-\xi ) \tau}{\left\|
  \begin{pmatrix}
   N_x p_k^c \\
   Y_k^{-1} N_y p_k^c
\end{pmatrix} \right\|}.
\end{equation}

Assumptions (AS2)-(AS3) imply that
% Lemma \ref{ykbound} holds which can be obtained (\ref{Dup}) , 
$\{B_k\}$ is bounded and $ \|N_k\| \leq \gamma_N $. For $\beta_k^c \geq 0$, using the definition (\ref{pkc}) of $ p_k^c $, the tangential Cauchy decrease condition (\ref{TCauchycon}) and the Cauchy-Schwarz inequality, we have that
%\begin{subequations}
  \begin{align}\label{negtpred}
    &~ -\text{tpred}_k(t_k) \nonumber \\
     \leq &~ - \gamma_t \text{tpred}_k(\beta_k^c N_k p_k^c) \nonumber \\
    = &~ \gamma_t \left[ \left(\nabla f_k + B_k n_x\right)^T \beta_k^c N_x p_k^c - \mu \left( Y^{-1}_k e - Y_k^{-2} n_y \right)^T \beta_k^c N_y p_k^c \right. \nonumber \\
    &~ \left. + \cfrac{1}{2} (\beta_k^c)^2 (p_k^c)^T (N_x^T B_k N_x + \mu N_y^T Y^{-2}_k N_y) p_k^c + \cfrac{1}{3} (\beta_k^c)^3 \sigma_k \|D_k N_k p_k^c\| ^3 \right]  \nonumber \\
    \leq &~ \gamma_t \beta_k^c \|p_k^c\|^2 \left( -1 + \cfrac{1}{2} \beta_k^c \| W_k^N \| + \cfrac{1}{3} (\beta_k^c)^2 \sigma_k \|D_k\| ^3 \|N_k \| ^3 \| p_k^c \| \right) \nonumber \\
    \leq &~ \gamma_t \beta_k^c \|p_k^c\|^2 \left( -1 + \cfrac{1}{2} \beta_k^c \| W_k^N \| + \cfrac{1}{3} (\beta_k^c)^2 \sigma_k 
   % \gamma_D
    \gamma_N^3 \| D_k \|^3 \| p_k^c \| \right),
  \end{align}
%\end{subequations}
where 
\begin{equation}\label{Wkn}
   W_k^N := N_x^T B_k N_x + \mu N_y^T Y^{-2}_k N_y .
  \end{equation}
%(\ref{tpredgeq0}) satisfying 
Hence we can obtain from \eqref{negtpred} that $ \text{tpred}_k (n_k) \geq 0 $ provided
%Now $ \text{tpred}_k(t_k)\geq 0 $ provided $\beta_k^c \geq 0$ and 
\begin{equation}
 -1 + \cfrac{1}{2} \beta_k^c \| W_k^N \| + \cfrac{1}{3} (\beta_k^c)^2 \sigma_k  \gamma_N ^3 \| D_k \|^3 \| p_k^c \|  \leq 0\nonumber
\end{equation}
because $\beta_k^c \geq 0$.
Then the above inequality is equivalent to $\beta_k^c \in [ 0 , \bar{\beta}_k ]$, where

\begin{equation}
  \begin{aligned}
    &~ \bar{\beta}_k := \cfrac{3\left( -\cfrac{1}{2} \| W_k^N \| +\sqrt{ \cfrac{1}{4} \| W_k^N \|^2 + \cfrac{4}{3} \sigma_k \gamma_N ^3 \| D_k \|^3 \| p_k^c \| } \right)}{2 \sigma_k \gamma_N ^3 \| D_k \|^3 \| p_k^c \|}.\nonumber
  \end{aligned}
\end{equation}
Furthermore, we can express $ \bar{\beta}_k $ as
\begin{equation}\label{betaba}
  \bar{\beta}_k = 2 \left( \frac{1}{2} \| W_k^N \| +\sqrt{ \cfrac{1}{4} \| W_k^N \|^2 + \cfrac{4}{3} \sigma_k \gamma_N ^3 \| D_k \|^3 \| p_k^c \| } \right)^{-1}. \nonumber
\end{equation}
Let
\begin{equation}\label{thetak}
  \theta_k := \left[ \sqrt{2} \max \left\{\left\| W_k^N
    \right\| , 2 \sqrt{\sigma_k \gamma_N ^3 \| D_k \|^3 \| p_k^c \|} \right\}\right ]^{-1}.
\end{equation}
From the inequalities 
\begin{equation}
  \begin{aligned}
    \sqrt{ \cfrac{1}{4} \| W_k^N \|^2 + \cfrac{4}{3} \sigma_k \gamma_N^3 \| D_k \|^3 \| p_k^c \| }
    \leq &~ \cfrac{1}{2} \| W_k^N \| + \cfrac{2}{\sqrt{3}} \sqrt{ \sigma_k \gamma_N ^3 \| D_k \|^3 \| p_k^c \|  }\\
    \leq &~  \sqrt{2} \max \left\{\left\| W_k^N \right\| , 2 \sqrt{\sigma_k \gamma_N^3 \| D_k \|^3 \| p_k^c \|} \right\}\nonumber
  \end{aligned}
\end{equation}
obtained by $\sqrt{a^2 + b^2} \leq a + b (a,b \geq 0)$ and 
\begin{equation}
  \frac{1}{2} \| W_k^N \| \leq \sqrt{2} \max \left\{\left\| W_k^N \right\| , 2 \sqrt{\sigma_k \gamma_N^3 \| D_k \|^3 \| p_k^c \|} \right\},\nonumber
\end{equation}
it follows that $ 0 \leq \theta_k \leq \bar{\beta}_k $.
Also due to the upper bounds (\ref{betaup}) of $\beta$, and replacing $\beta$ in (\ref{negtpred}) with $ \theta_k $, we have that
\begin{equation}\label{negtpred2}
  \begin{aligned}
    &~ -\text{tpred}_k(t_k) \\
    \leq &~ \gamma_t \|p_k^c\|^2 \left( -1 + \cfrac{1}{2} \theta_k \| W_k^N \| + \cfrac{1}{3} \theta_k^2 \sigma_k \gamma_N^3 \| D_k \|^3 \| p_k^c \| \right)\\
    &~ \min \left\{ \left[ \sqrt{2} \max \left(\left\| W_k^N
    \right\| , 2 \sqrt{\sigma_k \gamma_N^3 \| D_k \|^3 \| p_k^c \|} \right)\right]^{-1} , \cfrac{( 1-\xi ) \tau}{\left\| 
    \begin{pmatrix}
   N_x p_k^c \\
   Y_k^{-1} N_y p_k^c
\end{pmatrix} \right\|} \right\}.
  \end{aligned} 
\end{equation}
It now follows from \eqref{thetak} that $\theta_k \| W_k^N \| \leq 1$ and $ \theta_k^2 \sigma_k \gamma_N^3 \| D_k \|^3 \| p_k^c \| \leq 1 $. So the expression in the round brackets in \eqref{negtpred2}
%$ -1 + \cfrac{1}{2} \theta_k \| W_k^N \| + \cfrac{1}{3} \theta_k^3 \sigma_k \gamma_N^3 \| D_k \|^3 \| p_k^c \| $
 is bounded above by $-1/6$. This and \eqref{negtpred2} imply the inequality in \eqref{tpredboun}.
\end{proof}

\subsection{Proof of Lemma \ref{nkuplemma}}\label{le4}
\begin{proof}
  Assumption (AS2) implies that Lemma \ref{ykbound} holds. It can be inferred that (\ref{Dup}) holds. By the definition of the normal predicted reduction \eqref{npred}, the change of variables (\ref{u}) and the Cauchy-Schwarz inequality, we have that
  \begin{equation}
    \begin{aligned}
     -\text{npred}_k (n_k) 
=&~ \left\| g_k+y_k+A_k^T n_x +n_y \right\| - \left\| g_k+y_k \right\|  +\frac{1}{3} \widetilde{\sigma}_k \left\| D_k n_k \right\|^3 \\
     \geq &~ - \left\| A_k^T u_x + Y_k u_y \right\|  + \frac{1}{3} \widetilde{\sigma}_k \left\| u_k \right\|^3 
     \geq   -  \left\| 
     \begin{pmatrix}
      A_k^T & Y_k 
     \end{pmatrix}
     \right\| \left\| u_k \right\| + \frac{1}{3} \widetilde{\sigma}_k \left\| u_k \right\|^3. \\
      \nonumber
    \end{aligned}
  \end{equation}
   Hence, $ -\text{npred}_k ( n_k ) > 0 $ whenever 
     \begin{equation}
       \left\| u_k \right\| > \sqrt{3} \sqrt{ \cfrac{ \left\| 
      \begin{pmatrix}
      A_k^T & Y_k 
      \end{pmatrix}
      \right\|}{\tilde{\sigma}_k} } . \nonumber
     \end{equation}
     Substituting $ u $ back to $ n $, using the Cauchy-Schwarz inequality we obtain that
     \begin{equation}
       \left\| n_k \right\| > \frac{\sqrt{3}}{\|D_k\|} \sqrt{ \cfrac{ \left\| 
      \begin{pmatrix}
      A_k^T & Y_k 
      \end{pmatrix}
      \right\|}{\tilde{\sigma}_k} } . \nonumber
     \end{equation}
     But $ \text{npred}_k ( n_k ) \geq 0 $. Thus it yields that
     \begin{equation}
       \left\| n_k \right\| \leq \frac{\sqrt{3}}{\|D_k\|} \sqrt{ \cfrac{ \left\| 
      \begin{pmatrix}
      A_k^T & Y_k 
      \end{pmatrix}
      \right\|}{\tilde{\sigma}_k} } . \nonumber
     \end{equation}
     % and (\ref{Dup}) we obtain the result (\ref{nkup}).
     %\begin{equation}
     % \|n_k\| > \cfrac{\sqrt{3}}{\| D_k \|} \sqrt{ \cfrac{ \left\| 
     % \begin{pmatrix}
     % A_k^T & Y_k 
     % \end{pmatrix}
     % \right\|}{\tilde{\sigma}_k} }, \nonumber
     %\end{equation}
     % where $ \gamma_D $ is given by (\ref{Dup}). 
The proof is complete.
\end{proof}

\subsection{Proof of Lemma \ref{tkuplemma}}\label{le5}
\begin{proof}
   We can deduce from \eqref{hatt} and \eqref{tN} that 
   \begin{equation}\label{tnp}
    t_k =
   \begin{pmatrix}
  t_x \\
  t_y
\end{pmatrix}=
\begin{pmatrix}
  N_x p_k \\
  N_y p_k
\end{pmatrix}=N_k p_k.
\end{equation}
    Due to \eqref{pkc}, \eqref{tpred}, %of  $ p_k^c $ and $\text{tpred}_k (t_k) $, respectively,
    \eqref{tnp}, the assumption (AS3), for $ k \geq 0 $, it can be inferred that
  \begin{equation}
  \begin{aligned}
  &~ - \text{tpred}_k (t_k) \\
  = &~ - \text{tpred}_k ( N_k p_k ) \\
  = &~  \left(\nabla f_k + B_k n_x\right)^T N_x p_k - \mu \left( Y^{-1}_k e - Y_k^{-2} n_y \right)^T N_y p_k \\
  &~ + \frac{1}{2} p_k^T (N_x^T B_k N_x + \mu N_y^T Y^{-2}_k N_y)p_k + \frac{1}{3}\sigma_k \left\|
\begin{pmatrix}
  N_x p_k \\
  Y^{-1}_k N_y p_k 
\end{pmatrix}
%\left( d_x , Y_k^{-1} d_y \right)
\right\|^3 \\
  = &~ -(p_k^c)^T p_k + \frac{1}{2} p_k^T (N_x^T B_k N_x + \mu N_y^T Y^{-2}_k N_y)p_k + \frac{1}{3}\sigma_k \left\|
\begin{pmatrix}
  N_x p_k \\
  Y^{-1}_k N_y p_k 
\end{pmatrix}
%\left( d_x , Y_k^{-1} d_y \right)
\right\|^3 \\
  \geq &~ -\|p_k^c\| \|p_k\| - \frac{1}{2} \| p_k \|^2 \| W_k^N \|  + \frac{1}{3}\sigma_k  \gamma_N^{-3} \|D_k\|^3 \|p_k\|^3 \\
  = &~ \left( \frac{1}{9} \sigma_k  \gamma_N^{-3} \|D_k\|^3 \|p_k\|^3 -\|p_k^c\| \|p_k\| \right) + \left( \frac{2}{9} \sigma_k \gamma_N^{-3} \|D_k\|^3 \|p_k\|^3 - \frac{1}{2} \| p_k \|^2 \| W_k^N \|  \right) , \nonumber
  \end{aligned}
  \end{equation}
  where $W_k^N$ is defined by \eqref{Wkn} and $ \gamma_N^{-1} $ is given by \eqref{N_kbou}. But 
  \begin{equation}
  \cfrac{1}{9} \sigma_k  \gamma_N^{-3} \|D_k\|^3 \|p_k\|^3 -\|p_k^c\| \|p_k\| > 0 \nonumber
  \end{equation}
  if
$   \| p_k \| > 3 \gamma_N^{\frac{3}{2}} \sqrt{ \frac{\|p_k^c\|}{\sigma_k \|D_k\|^3}}, $ 
  while 
  \begin{equation}
   \cfrac{2}{9} \sigma_k \gamma_N^{-3} \|D_k\|^3 \|p_k\|^3 - \cfrac{1}{2} \| p_k \|^2 \| W_k^N \| >0 \nonumber
  \end{equation}
  if
$
  \| p_k \| > \frac{9 \gamma_N^3\| W_k^N \| }{4\sigma_k\|D_k\|^3}.  
$
    Hence, $ -\text{tpred}_k > 0 $ whenever 
  \begin{equation}
    \|p_k\| > \cfrac{3 \gamma_N^{\frac{3}{2}} }{\sigma_k} \max \left\{ \sqrt{\frac{ \sigma_k \|p_k^c\|}{\|D_k\|^3}} , \cfrac{3 \gamma_N^{\frac{3}{2}} \| W_k^N \|}{4\|D_k\|^3} \right\}. \nonumber
  \end{equation}
  But $ \text{tpred}_k \geq 0 $ due to (\ref{tpredgeq0}). Then
  \begin{equation}
    \|p_k\| \leq \cfrac{3 \gamma_N^{\frac{3}{2}} }{\sigma_k} \max \left\{ \sqrt{\frac{ \sigma_k \|p_k^c\|}{\|D_k\|^3}} , \cfrac{3 \gamma_N^{\frac{3}{2}} \| W_k^N \|}{4\|D_k\|^3} \right\}. \nonumber
  \end{equation}
  %Transforming back to the original variable from \eqref{hatt} and \eqref{tN}. 
  %Replace the variable $p$ of the above inequality with the original variable $t$ from \eqref{hatt} and \eqref{tN}.
  Due to assumption (AS3), the boundedness of $ \{B_k\} $ and $ \{y_k\} $ and Cauchy-Schwarz inequality, we obtain the result (\ref{tkbou}) from \eqref{tnp}.
\end{proof}

\subsection{Proof of Lemma \ref{dkuplemma}}\label{le6}
\begin{proof}
Based on the assumptions (AS2)-(AS3), we have that the upper bound (\ref{nkup}) and (\ref{tkbou}) of the step $ n_k $ and $t_k$, respectively. There also exist that $ \{ g_k \} $, $ \{ y_k \} $ and $ \{ A_k \} $ are bounded. 
Then from \eqref{sigmahat}, \eqref{nkup} and the fact that $ \xi \in (0,1) $, we have  that
    \begin{equation}\label{nupp}
     \| n_k \| \leq \frac{\sqrt{3}}{\|D_k\|}
     % \cfrac{\sqrt{3}}{ \|D_k\|} 
     \sqrt{\frac{ \left\| 
     \begin{pmatrix}
       A_k^T & Y_k
     \end{pmatrix}
     \right\|}{\tilde{\sigma}_k} } \leq \cfrac{\gamma_n^{\prime }}{ \sqrt{ \tilde{\sigma}_k}\|D_k\|} = \cfrac{\gamma_n^{\prime \prime}}{ \sqrt{ \sigma_k }\|D_k\|},
    \end{equation}
    where $ \gamma_n^{\prime \prime} = \xi^{\frac{3}{2}} \gamma_n^{\prime } $.
    From \eqref{D} and \eqref{pkc}, we have that
    \begin{eqnarray*}
p_k^c  &=& -N_x^T \left(\nabla f_k + B_k n_x\right) + \mu N_y^T \left( Y^{-1}_k e - Y_k^{-2} n_y \right)\\
     &=& N_k^T \begin{pmatrix}
                     -(\nabla f_k+B_k n_x) \\
                     \mu(Y^{-1}_k e - Y_k^{-2} n_y) 
                   \end{pmatrix} = N_k^T \begin{pmatrix}
                 I & 0 \\
                 0 & Y_k^{-1} 
               \end{pmatrix}
               \begin{pmatrix}
                     -(\nabla f_k+B_k n_x) \\
                     \mu( e - Y_k^{-1} n_y) 
               \end{pmatrix} \\
    &=& N_k^T D_k \begin{pmatrix}
                     \nabla f_k \\
                     -\mu e
                   \end{pmatrix}
        - N_k^T D_k \begin{pmatrix}
                 I & 0 \\
                 0 & Y_k^{-1} 
               \end{pmatrix} 
               \begin{pmatrix}
                     B_k \\
                     \mu I
               \end{pmatrix}
        n_k.
    \end{eqnarray*}
    Then, assumptions (AS2)-(AS3) imply that 
    \begin{equation}
    \begin{aligned}\label{pkcup}
    \|p_k^c\| &\leq \|N_k\| \|D_k\| \left\|\begin{pmatrix}
                     \nabla f_k \\
                     -\mu e
                   \end{pmatrix}\right\| + \|N_k\| \|D_k\|^2 
                   \left\|
                   \begin{pmatrix}
                     B_k \\
                     \mu I
               \end{pmatrix}
                   \right\| \|n_k\| \\
    &\leq \gamma_{p_1} \|D_k\| + \gamma_{p_2}\|D_k\|^2 \|n_k\|,
    \end{aligned}
    \end{equation}
    where $\gamma_{p_1}$ and $\gamma_{p_2}$ are constants.
    
    %From the assumption \eqref{sigsuff} implies that 
    %\begin{equation}\label{A15}
    %  \sqrt{\sigma_k \| p_k^c \|} \geq \varepsilon \sqrt{\sigma_k / \| p_k^c \|} \rightarrow \infty ,
    %\end{equation}
 %when $ k \rightarrow \infty, k \in \mathcal{I} $.
From $\sigma_0 > \hat{\sigma}_{\min}$ and the update rules of $\sigma_k$ in Algorithm \ref{Alg2}, we can get $\sigma_k \geq \hat{\sigma}_{\min}$ for all $k$. From \eqref{pkc}, \eqref{nupp} and \eqref{pkcup}, we have that
    \begin{equation}\label{tupp}
    \begin{aligned}
        \| t_k \|  &\leq 3 \gamma_N^{\frac{5}{2}} \sqrt{\cfrac{\|p_k^c\|}{\sigma_k\|D_k\|^3 }} \leq 3 \gamma_N^{\frac{5}{2}} \sqrt{\cfrac{\gamma_{p_1}}{\sigma_k\|D_k\|^2}+\cfrac{\gamma_{p_2}\|n_k\|}{\sigma_k\|D_k\|}} \\
        &\leq 3 \gamma_N^{\frac{5}{2}} \sqrt{\cfrac{\gamma_{p_1}}{\sigma_k\|D_k\|^2}+\cfrac{\gamma_{p_2} \gamma_n^{\prime \prime}}{\sigma_k^{\frac{3}{2}}\|D_k\|^2}} \leq \frac{3 \gamma_N^{\frac{5}{2}}}{\sqrt{\sigma_k}\|D_k\|} \sqrt{\gamma_{p_1}+\cfrac{\gamma_{p_2} \gamma_n^{\prime \prime}}{\hat{\sigma}_{\min}^{1/2}}} \leq \cfrac{\gamma_t^{\prime \prime}}{ \sqrt{\sigma_k}\|D_k\|} 
        \end{aligned}
    \end{equation}
   for all $ k $, where % $\hat{\sigma}_{\min} \in (0,\sigma_0]$ is a constant and 
   $ \gamma_t^{\prime \prime} = 3 \gamma_N^{\frac{5}{2}} \sqrt{\gamma_{p_1}+\gamma_{p_2} \gamma_n^{\prime \prime}/\hat{\sigma}_{\min}^{1/2}} $. Thus, based on \eqref{dcomposite}, the above two inequalities \eqref{nupp} and \eqref{tupp} and the Cauchy-Schwarz inequality, there exists a value $ \gamma_d $ such that
    \begin{equation}
      \| d_k \| \leq \| n_k\| +\| t_k \| \leq \cfrac{\gamma_n^{\prime \prime}}{\sqrt{ \sigma_k }\|D_k\|} + \cfrac{\gamma_t^{\prime \prime}}{\sqrt{ \sigma_k }\|D_k\|} \leq\cfrac{\gamma_d^{\prime \prime}}{\sqrt{ \sigma_k}\|D_k\|} = \cfrac{\gamma_d}{\sqrt{\tilde{\sigma}_k}\|D_k\|}, \nonumber
    \end{equation}
   where $ \gamma_d = 2 \gamma_n^{\prime \prime} / \xi^{\frac{3}{2}} $ for all $ k >0$. %$ k \in \mathcal{I}$
\end{proof}

\subsection{Proof of Lemma \ref{p-al}}\label{le7}
\begin{proof}
 A Taylor expansion of $ f(x_k+d_x) $ and the Lipschitz continuity of $ \nabla f(x) $ give that 
  \begin{eqnarray}\label{fb}
    &&  f(x_k + d_x) - f(x_k) - \nabla f_k^T d_x \nonumber \\
    &\leq &~ \sup_{ \varrho \in [ x_k, x_k +d_x ]} ( \nabla f(\varrho) ^T d_x - \nabla f_k^T d_x ) \leq  \gamma_{fd} \| d_x \|^2.
 \end{eqnarray}
% \begin{equation}\label{fb}
%  \begin{aligned}
%    &~  f(x_k + d_x) - f(x_k) - \nabla f_k^T d_x  \\
%    \leq &~ \sup_{ \varrho \in [ x_k, x_k +d_x ]} ( \nabla f(\varrho) ^T d_x - \nabla f_k^T d_x ) \\
%    \leq &~ \gamma_{fd} \| d_x \|^2.
%  \end{aligned} 
% \end{equation}
 Using the Lipschitz continuity of $ A $,
 % $\{g_k\}$ and $\{y_k\}$ is bounded and $ a^2 -b^2 \leq 2 a (a-b) $, 
  we have that 
   \begin{eqnarray}\label{gb}
  && \left| \left\| g(x_k+d_x) + y_k + d_y \right\| - \left\| g_k + y_k + A_k^T d_x + d_y \right\| \right|\nonumber \\
  &\leq &~ \left\| g(x_k + d_x) - g_k -A_k^T d_x \right\| \leq  \sup_{ \xi \in [ x_k, x_k +d_x ]} \left\| A(\xi) - A(x_k) \right\| \| d_x \|
  \leq  \gamma_{gy} \| d_x \|^2 
  \end{eqnarray}
%  \begin{equation}\label{gb}
%  \begin{aligned}
%  &~ \left| \left\| g(x_k+d_x) + y_k + d_y \right\| - \left\| g_k + y_k + A_k^T d_x + d_y \right\| \right| \\
%  \leq &~ \left\| g(x_k + d_x) - g_k -A_k^T d_x \right\| \\
%  \leq &~ \sup_{ \xi \in [ x_k, x_k +d_x ]} \left\| A(\xi) - A(x_k) \right\| \| d_x \| \\
%  \leq &~ \gamma_{gy} \| d_x \|^2 
%  \end{aligned}
%  \end{equation}
  for some positive constant $ \gamma_{gy} $. Similarly, for any scalars $\kappa$ and $\kappa^{\prime}$ satisfying $ \kappa > 0 $ and $ \kappa^{\prime} \geq -\tau \kappa $,
  \begin{equation}\label{lnineq}
    \begin{aligned}
      \left| \ln ( \kappa + \kappa^{\prime} ) - \ln \kappa - \cfrac{\kappa^{\prime}}{\kappa} \right|
      &\leq \sup_{ t \in [ \kappa, \kappa +\kappa^{\prime} ]} \left| \cfrac{\kappa^{\prime}}{t} -\cfrac{\kappa^{\prime}}{\kappa} \right| 
      \leq \cfrac{\kappa}{\kappa + \kappa^{\prime}} \left(\cfrac{\kappa^{\prime}}{\kappa}\right)^2 \leq \cfrac{1}{ 1- \tau } \left(\cfrac{\kappa^{\prime}}{\kappa}\right)^2.
    \end{aligned} 
  \end{equation}
  These three inequalities \eqref{fb}, \eqref{gb} and \eqref{lnineq}, the definitions \eqref{pred} and \eqref{ared}, %of $\text{ared}_k$ and $\text{pred}_k$, 
  and the bound
  \begin{equation}
    d_x^T B_k d_x \leq \gamma_B \|d_x\|^2 \nonumber
  \end{equation}
  with a positive constant $\gamma_B$ obtained from assumption (AS2) yield that
%the Lipschitz continuity of $ \nabla f $ , and the boundedness of $\{B_k\}$, we have 
  \begin{eqnarray*}
      && \left| \text{pred}_k(d_k) - \text{ared}_k(d_k) \right|  \\
      &= & \left| f(x_k + d_x) - f(x_k) - \nabla f_k^T d_x - \cfrac{1}{2} d_x^T B_k d_x - \cfrac{1}{3} \sigma_k \|D_k d_k\|^3 \right. \\
      &&   + \nu_k \left(  \left\| g(x_k+d_x) + y_k + d_y \right\|- \left\| g_k + y_k + A_k^T d_x + d_y \right\|  \right) \\
      && \left. - \mu \sum_{i=1}^m \left[ \ln (y_k+d_y)^{(i)} - \ln y_k^{(i)} - \cfrac{d_y^{(i)}}{y_k^{(i)}} + \cfrac{1}{2} \left( \cfrac{d_y^{(i)}}{y_k^{(i)}} \right)^2   \right]   \right| \\
      &\leq & \gamma_{fd} \|d_x\|^2 + \cfrac{1}{2} \gamma_B \|d_x\|^2 + \nu_k \gamma_{gy} \|d_x\|^2  + \mu \cfrac{1}{1-\tau} \| Y_k^{-1} d_y \|^2 + \cfrac{1}{2} \mu \| Y_k^{-1} d_y \|^2   \\
      &\leq & \left( \gamma_{fd} + \cfrac{1}{2} \gamma_B + \nu_k \gamma_{gy} \right) \|d_x\|^2 + \mu \left( \cfrac{1}{1-\tau} + \cfrac{1}{2} \right) \| Y_k^{-1} d_y \|^2 \\
      &\leq & \gamma_L \left( ( 1+ \nu_k ) \| d_x \|^2 + \| Y_k^{-1} d_y \|^2 \right),
  \end{eqnarray*}
  where $ \gamma_L := \max \left\{ \gamma_{fd} + \cfrac{1}{2} \gamma_B ,  \gamma_{gy} , \mu \left( \cfrac{1}{1-\tau} + \cfrac{1}{2} \right) \right\} $. %The lemma holds true.
\end{proof}

\subsection{Proof of Lemma \ref{feasib}}\label{le8}
\begin{proof}
  Assume that, for the sake of contradiction, there exists an infinite subsequence indexed by $i$ (where the outer iteration counter $k$ is fixed) with $\sigma_{k,i} < \sigma_{k+1,i} $, and corresponding steps $ d_{k,i} = n_{k,i} + t_{k,i} $ and penalty parameters $ \nu_{k,i} $, such that $ \text{ared}_{k,i} (d_{k,i}) < \eta_2 \text{pred}_{k,i} (d_{k,i}) $ for all $ i $. Since $ \eta_2 \in (0,1) $, this implies $ \left| \text{pred}_{k,i} (d_{k,i}) - \text{ared}_{k,i} (d_{k,i}) \right| > ( 1 - \eta_2 ) \text{pred}_{k,i} (d_{k,i}) $. Then using inequalities (\ref{nkup}) and (\ref{tkbou}), which imply that $d_x^{k,i} \rightarrow 0$ and $d_y^{k,i} \rightarrow 0$, and Lemma \ref{p-al}, we obtain 
  \begin{equation}\label{predki}
    \text{pred}_{k,i} (d_{k,i}) = ( 1 + \nu_{k,i} ) o (\| d_x^{k,i} \|) + o (\| d_y^{k,i} \|).
  \end{equation}
  We shall show that this equation leads to a contradiction, thus proving the lemma.
%   For the rest of the proof $ \gamma_1 , \gamma_2 ,\cdots $, $ \gamma_1^{\prime} ,f \gamma_2^{\prime},\cdots $ denote positive    constants (independent of $i$ but not of $k$), and 
   To simplify the representation, we omit the arguments in $\text{npred}_{k,i} (n_{k,i})$, $\text{tpred}_{k,i} (t_{k,i})$, and $\text{pred}_{k,i} (d_{k,i})$.
  
  First, consider the case when $ g_k + y_k = 0 $. From (\ref{npred}) and (\ref{npred0}), we have $ \text{npred}_{k,i} = 0 $. Since $ g_k + y_k = 0 $, problem \eqref{norm3} has a solution in the range space of $
   \begin{pmatrix}
      A_k^T & Y_k^2 
   \end{pmatrix}^T
   $. The range space condition (\ref{ranspacon}) implies that $ n_{k,i} $ is of the form (\ref{ranspacon}) for some vector $ \omega_{k,i} $. Therefore, 
   \begin{equation}
   0 = \text{npred}_{k,i} = \left\| ( A_k^T A_k + Y_k^2) \omega_{k,i} ) \right\| + \cfrac{1}{3} \tilde{\sigma}_k \left\| 
   \begin{pmatrix}
      A_k \\
      Y_k
   \end{pmatrix}
   \omega_{k,i} \right\|^3, \nonumber
   \end{equation}
   which implies that $ \omega_{k,i} = 0 $ and $ n_{k,i} = 0 $ because the matrix $ A_k^T A_k + Y_k^2 $ is nonsingular. Given that $\text{npred}_{k,i}$ and $ n_{k,i}$ both vanish, from \eqref{tpredgeq0}, \eqref{repred2} and \eqref{chi}, we have $ \text{pred}_{k,i}  \geq 0 $. Thus, inequality (\ref{penapara}) holds independently of $\nu_{k,i}$, which implies that $ \{ \nu_{k,i} \}_{ i \geq 1 } $ is bounded. Consequently, (\ref{predki}) yields
   \begin{equation}\label{predki2}
    \text{pred}_{k,i} =  o (\| d_x^{k,i} \|) + o (\| d_y^{k,i} \|).
   \end{equation}
   On the other hand, from (\ref{pkc}) and $ n_{k,i} = 0 $, we have that 
   \begin{equation}
    p_k^c = -N_x^T \nabla f_k + \mu N_y^T Y^{-1}_k e. \nonumber
    \end{equation}
   This vector is nonzero; otherwise the KKT conditions of the barrier problem \eqref{barr} and the definition of $ N_k $ would imply that $ \begin{pmatrix}
  x_k \\
  y_k
\end{pmatrix}$ is a stationary point of the problem. 

By the assumption $\sigma_{k,i} < \sigma_{k+1,i} $ of proof by contradiction, $\sigma_{k,i}\to\infty$ as $i\to\infty$.
Then, for $\sigma_{k,i} $ is sufficiently large (when $i$ is large enough), from inequality \eqref{Dup} and (\ref{tkbou}), the boundedness of  $\{N_k\}$ and $\{\nabla f_k\}$, we deduce
   \begin{equation}
     \|t_{k,i}\| \leq 3 \gamma_N^{\frac{5}{2}}  \sqrt{ \frac{\|p_k^c\|}{\sigma_{k,i} \|D_k\|^3}}  \leq 3 \gamma_N^{\frac{5}{2}} \sqrt{\cfrac{\| -N_x^T \nabla f_k + \mu N_y^T Y^{-1}_k e \|}{\sigma_{k,i}\|D_k\|^3}} \leq \frac{\gamma_{t_0} }{\sqrt{\sigma_{k,i}}}, \nonumber
   \end{equation}
   where $ \gamma_{t_0} $ is a constant. Then, from inequality (\ref{tpredboun}), the boundedness of $ \{y_k\} $,
   %(\ref{hatsigmabou}), 
   the mean value inequality, and the fact that $ t_{k,i} = d_{k,i} $, for all $k$ sufficiently large, we obtain 
   \begin{equation}
     \begin{aligned}
       \text{pred}_{k,i}
       &= \text{tpred}_{k,i} \geq \cfrac{\gamma_t}{12\sqrt{2}} \sqrt{\cfrac{\|p_k^c\|^3}{\sigma_k
     \gamma_N^3 \|D_k\|^3}} \geq \cfrac{ \gamma_1^{\prime}}{\sqrt{\sigma_{k,i}}} \geq \cfrac{\gamma_1^{\prime}}{\gamma_{t_0} } \| d_{k,i} \|  \geq \cfrac{\gamma_2^{\prime}}{\sqrt{2}\gamma_{t_0} } \left(\| d_x^{k,i} \| + \| d_y^{k,i} \|\right), \nonumber
     \end{aligned}
   \end{equation}
   where $\gamma_1^{\prime}$ and $\gamma_2^{\prime}$ are constants. This contradicts (\ref{predki2}).
   
   Now consider the case $ g_k + y_k \neq 0 $ . Since the matrix $
   \begin{pmatrix}
      A_k^T & Y_k 
   \end{pmatrix}
   $ has full rank and  $\tilde{\sigma}_{k,i}$ can be sufficiently large, we can deduce from (\ref{npredboun}) that for large $i$
   \begin{equation}\label{npredboun2}
     \text{npred}_{k,i} \geq \cfrac{\gamma_3}{\sqrt{\tilde{\sigma}_{k,i}}}. \nonumber
   \end{equation}
 Then, from \eqref{penapara}, \eqref{Dup}, \eqref{dkup} and the fact that 
 \begin{equation}
 \| d_x^{k,i} \| + \| d_y^{k,i} \| \leq \frac{\sqrt{2} \gamma_d}{\|D_k\|} \sqrt{\cfrac{1}{\tilde{\sigma}_{k,i}}} \nonumber
 \end{equation}
 for all $k $ sufficiently large, we obtain that
   \begin{equation}
     \begin{aligned}
     \text{pred}_{k,i} &~ \geq \delta \nu_{k,i} \text{npred}_{k,i} \geq \delta \nu_{k,i} \cfrac{\gamma_3 }{\sqrt{\tilde{\sigma}_{k,i}}} \geq \frac{\sqrt{2}}{2} \delta \nu_{k,i} \gamma_3 \gamma_d^{-1} \gamma_D^{-1} \left(\| d_x^{k,i} \| + \| d_y^{k,i} \|\right),
     \end{aligned} \nonumber
   \end{equation}
   %Due to the contradiction between $ \nu_{k,i} \geq \nu_{-1} > 0 $ and (\ref {predki2}), the conclusion of lemma holds.
   which contradicts \eqref{predki}. Therefore, the assumption is false, and the lemma holds.
\end{proof}

\subsection{Proof of Lemma \ref{lemma9}}\label{le9}
\begin{proof}
 For the step in $ x $, it yields from \eqref{Dup} and (\ref{dkup}) that
$
  \| x_{k+1} - x_k \| = \| d_x \| \leq \| d_k \| \leq \cfrac{\gamma_d^{\prime \prime} \gamma_D}{\sqrt{\sigma_k}}.
$
  Similarly, for the step in $ y $, it is evident that
 $
  \| y_{k+1} - y_k \| = \| d_y \| \leq \| d_k \| \leq \cfrac{\gamma_d^{\prime \prime}\gamma_D}{\sqrt{\sigma_k}}.
  $
 Hence, 
$
\left\|
    \begin{pmatrix}
      x_{k+1} \\
      y_{k+1}
    \end{pmatrix}-
    \begin{pmatrix}
      x_{k} \\
      y_{k}
    \end{pmatrix}
%\left( d_x , Y_k^{-1} d_y \right)
\right\| \leq \| x_{k+1} - x_k \|  + \| y_{k+1} - y_k \| 
\leq \cfrac{2 \gamma_d^{\prime \prime}\gamma_D}{\sqrt{\sigma_k}}.  
$
Denote $\gamma_{xy} =2 \gamma_d^{\prime \prime}\gamma_D$. Then the conclusion follows.
\end{proof}

\subsection{Proof of Lemma \ref{nandnpred}}\label{le10}
\begin{proof}
  From Lemma \ref{npredlow}, we have that
  \begin{equation}
  \begin{aligned}
    \| g_k + y_k \| \text{npred}_k (n_k)&~ \geq \frac{\gamma_n}{12} \left\| \binom{A_k}{Y_k} ( g_k + y_k ) \right\| \\
    &~ \min \left\{ \sqrt{\cfrac{ \left\|
     \begin{pmatrix}
       A_k \\
       Y_k 
     \end{pmatrix}
     ( g_k + y_k )\right\|}{\tilde{\sigma}_k \| g_k + y_k \| }} \; , \; \cfrac{ \left\|
     \begin{pmatrix}
       A_k \\
       Y_k 
     \end{pmatrix}
     ( g_k + y_k )\right\|}{\left\| 
     \begin{pmatrix}
      A_k^T & Y_k
\end{pmatrix}\right\|^2} \; , \; \xi \tau \right\}.
    \end{aligned} \nonumber
  \end{equation}
  
  If $ g_k + y_k = 0 $, then the above inequality implies $ \text{npred}_k (n_k) = 0 $. We obtain $ n_k = 0 $ as the same proof by Lemma \ref{feasib}, then (\ref{nnpred}) is clearly satisfied.
  
  We prove that in the case of $ g_k + y_k \neq 0 $ below.
  
  Using (\ref{sigmamin}), we have that
   \begin{equation}\label{npred111}
     \text{npred}_k (n_k) \geq \cfrac{ \gamma_n \gamma_{\rm{inf}}}{12}  \min\left\{ \sqrt{\cfrac{ \gamma_{\rm{inf}}}{\tilde{\sigma}_k}} \; , \; \cfrac{ \gamma_{\rm{inf}} \left\| g_k + y_k \right\|}{\gamma_{\rm{sup}}^2}  \; , \; \xi \tau \right\},
   \end{equation}
   where $ \gamma_{\rm{sup}} \geq \sup_k \| \begin{pmatrix}   
A_k^T & Y_k
\end{pmatrix} \| $.
   Without loss of generality, we assume that $\left\| g_k + y_k \right\| < \xi \tau \gamma_{\rm{sup}}^2 / \gamma_{\rm{inf}} $. Then, the minimum value in (\ref{npred111}) cannot occur at $ \xi \tau $, and we can express (\ref{npred111}) as
   \begin{equation}\label{npred112}
     \text{npred}_k (n_k) \geq \cfrac{\gamma_n \gamma_{\rm{inf}}}{12} \min \left\{ \sqrt{\cfrac{ \gamma_{\rm{inf}}}{\tilde{\sigma}_k}} \; , \; \cfrac{ \gamma_{\rm{inf}} \left\| g_k + y_k \right\|}{\gamma_{\rm{sup}}^2}  \right\}.
   \end{equation}
   Now we consider two cases. 
   
   Case 1. Suppose that 
$
    \| g_k + y_k \| \geq \cfrac{ \gamma_{\rm{sup}}^2 }{ \gamma_{\rm{inf}} }\sqrt{\cfrac{ \gamma_{\rm{inf}} }{\tilde{\sigma}_k }}. 
$
   Then, from proof of Lemma \ref{nkuplemma} %it can be concluded that
    and conclusion of Lemma \ref{dkuplemma}, there always exist
   \begin{equation}
   \begin{aligned}
     \text{npred}_k (n_k) &~ \geq \cfrac{\gamma_n \gamma_{\rm{inf}} }{12} \min \left\{ \sqrt{\cfrac{ \gamma_{\rm{inf}} }{\tilde{\sigma}_k }}  \; , \;  \sqrt{\cfrac{ \gamma_{\rm{inf}} }{\tilde{\sigma}_k }} \right\} = \cfrac{ \gamma_n \gamma_{\rm{inf}}^{\frac{3}{2}} }{12 \sqrt{\tilde{\sigma}_k}} \geq  \cfrac{\gamma_n \gamma_{\rm{inf}}^{\frac{3}{2}} }{12 \gamma_d} \left\|
     \begin{pmatrix}
       n_x \\
       Y_k^{-1} n_y
     \end{pmatrix}
     \right\| . \nonumber
     \end{aligned}
   \end{equation}   
   From this inequality, we can immediately derive (\ref{nnpred}).
   
   Case 2. Suppose that
   \begin{equation}\label{gyup}
     \| g_k + y_k \| \leq \cfrac{ \gamma_{\rm{sup}}^2 }{ \gamma_{\rm{inf}} }\sqrt{\cfrac{ \gamma_{\rm{inf}} }{\tilde{\sigma}_k }}.
   \end{equation}
   Consider an arbitrary vector $ \bar{n} $ within the range of $\begin{pmatrix}   
A_k^T & Y_k^2
\end{pmatrix}^T$, which gives a lower objective value in the normal subproblem \eqref{norm3} than $ n=0 $. If $\| g_k + y_k \|$ is sufficiently small, then this vector satisfies the constraint of problem \eqref{norm3}. Since $ \bar{n} = \begin{pmatrix}   
A_k^T & Y_k^2
\end{pmatrix}^T \omega $ for some vector $\omega \in \mathbf{R}^m $, we have that
   \begin{equation}
     \| g_k + y_k \| \geq \left\| g_k + y_k + ( A_k^T \quad Y_k ) \binom{A_k}{Y_k} \omega \right\| + \cfrac{1}{3} \tilde{\sigma}_k \left\| \binom{A_k}{Y_k} \omega_k \right\|^3. \nonumber
   \end{equation}
   From $ \tilde{\sigma}_k > 0 $, the above inequality can be expressed as
   \begin{equation}
     \left\|( A_k^T \quad Y_k ) \binom{A_k}{Y_k} \omega \right\|^2 \leq - 2( g_k + y_k )^T ( A_k^T A_k +Y_k^2 ) \omega. \nonumber
   \end{equation}  
   Using the Cauchy-Schwarz inequality, it implies that 
$     \left\|( A_k^T \quad Y_k ) \binom{A_k}{Y_k} \omega \right\| \leq 2 \| g_k + y_k \|.$
   From (\ref{sigmamin}), it follows that
   \begin{equation}\label{nbargy}
     \left\|
\begin{pmatrix}
  \bar{n}_x \\
  Y_k^{-1} \bar{n}_y
\end{pmatrix}
\right\|
     = \left\| \binom{A_k}{Y_k} \omega \right\| \leq \cfrac{2}{\gamma_{\rm{inf}}} \| g_k + y_k \| ,
   \end{equation}  
   where $\bar{n}= \begin{pmatrix}
  \bar{n}_x \\
  \bar{n}_y
\end{pmatrix}$.
   Together with (\ref{gyup}) and (\ref{nbargy}) imply that $ \bar{n} $ is bounded by $ \tilde{\sigma}_k $. Furthermore, for each slack variable $ y^{(i)} $, (\ref{nbargy}) is represented as
   \begin{equation}
     (Y_k^{-1} \bar{n}_y )^{(i)}\geq - 
     \left\|
\begin{pmatrix}
  \bar{n}_x \\
  Y_k^{-1} \bar{n}_y
\end{pmatrix}
\right\|
      \geq - \cfrac{2}{\gamma_{\rm{inf}}} \| g_k + y_k \| \geq -\xi \tau , \nonumber
   \end{equation} 
   provided that $ \| g_k + y_k \| \leq ( \xi \tau \gamma_{\rm{inf}} )/2 $. Thus, $ \bar{n} $ is feasible for problem \eqref{norm3}. The vector  $ \bar{n} $ falls within the range of $ \begin{pmatrix}   
A_k^T & Y_k^2
\end{pmatrix}^T $, and it gives a zero value for the objective function of problem \eqref{norm3}. 
By the above argument, if $\|g_k+y_k\|$ is sufficiently small, $\bar{n}$ is feasible for problem \eqref{norm3}, and therefore $\bar{n}$ is a solution to \eqref{norm3}.
   Given that $ \bar{n} $ is the solution to problem \eqref{norm3} within the range of $ \begin{pmatrix}   
A_k^T & Y_k^2
\end{pmatrix}^T $, the range space condition (\ref{ranspacon}) implies that the normal step $ n_k $ must also fall within the range of $ \begin{pmatrix}   
A_k^T & Y_k^2
\end{pmatrix}^T $. %This means, since $ \text{npred}_k \geq 0 $, $ n_k $ satisfies (\ref{nbargy}), thus
From \eqref{nbargy}, we obtain that
   \begin{equation}\label{ngy}
     \left\|
     \begin{pmatrix}
       n_x \\
       Y_k^{-1} n_y
     \end{pmatrix}
     \right\| \leq \cfrac{2}{\gamma_{\rm{inf}}} \| g_k + y_k \| .
   \end{equation}
   From (\ref{npred112}) and (\ref{gyup}), we have that
   \begin{equation}
     \text{npred}_k (n_k) \geq \cfrac{\gamma_n \gamma_{\rm{inf}}^2}{12 \gamma_{\rm{sup}}^2 } \left\| g_k + y_k \right\| \geq \cfrac{\gamma_n \gamma_{\rm{inf}}^3}{24 \gamma_{\rm{sup}}^2 } \left\|
     \begin{pmatrix}
       n_x \\
       Y_k^{-1} n_y
     \end{pmatrix}
     \right\|, \nonumber
   \end{equation}
   which together with (\ref{ngy}) implies (\ref{nnpred}).
\end{proof}

\subsection{Proof of Lemma \ref{nuknubar}}\label{le11}
\begin{proof}
  In Step 4 of ARCBIP, $ \nu_k $ is chosen to be sufficiently large such that
  \begin{equation}\label{penapara2}
   \text{pred}_k(d_k) \geq \delta \nu_k \text{npred}_k (n_k),
  \end{equation}
  where $\text{pred}_k$ is shown in \eqref{repred2} and \eqref{chi}.
  %both $ \{\nabla f_k\} $, $ \{A_k\} $ and $ \{B_k\} $ are bounded. Additionally, 
%  it should be noted that $ \{ \text{npred}_k \} $ is also bounded, since by (\ref{npred}), such that $ \text{npred}_k \leq \left\| g_k + y_k \right\| $.  
  By (\ref{npred}), we have that $ \text{npred}_k \leq \left\| g_k + y_k \right\| $ which can demonstrate the boundedness of $ \{ \text{npred}_k \} $.
  %Furthermore, this quantity is bounded based on assumption (A2) and Lemma \ref{ykbound}. 
  Therefore, from assumption (AS2), (\ref{chi}) and (\ref{nnpred}), there exists a constant $ \gamma_1^{\prime} > 0 $ such that
  \begin{equation}
    - \nabla f_k^T n_x - \frac{1}{2}n_x^T B_k n_x + \mu \left( e^T Y_k^{-1} n_y -\frac{1}{2} n_y^T Y_k^{-2} n_y  \right)  \geq - \gamma_1^{\prime} \text{npred}_k (n_k). \nonumber
  \end{equation}
  If $ \text{npred}_k > 0 $, then we can obtain  that
  \begin{equation}
  \cfrac{1}{2} \nu_k  \text{npred}_k (n_k) + \cfrac{1}{3} \sigma_k \|D_k t_k\|^3 + \cfrac{1}{3} \nu_k \tilde{\sigma}_k \|D_k n_k\|^3 - \cfrac{1}{3} \sigma_k \| D_k d_k \|^3 \geq 0 \nonumber
  \end{equation}
  from \eqref{nu}. If $ \text{npred}_k = 0 $, the result is the same as above. Based on (\ref{repred2}), the reduction in prediction satisfies
  \begin{equation}\label{repred4}
    \text{pred}_k (d_k) \geq \cfrac{1}{2} \nu_k  \text{npred}_k (n_k)+ \text{tpred}_k (t_k) - \gamma_1^{\prime} \text{npred}_k  (n_k).   
  \end{equation}
  
  Given that $ \text{npred}_k $ and $ \text{tpred}_k $ are nonnegative, if $ \nu_k \geq \gamma_1^{\prime} ( \frac{1}{2} - \delta ) $, then (\ref{penapara2}) holds.
  %we derive from this inequality that if $ \nu_k \geq \gamma_1^{\prime} ( \frac{1}{2} - \delta ) $  satisfies condition (\ref{penapara2}). 
  Therefore, if $ \nu_k $ exceeds $ \gamma_1^{\prime} ( \frac{1}{2} - \delta ) $, it will never increase. The above results indicate that after some iterations, as in $k_1$, $\nu_k$ remain unchanged at a certain constant $\bar{\nu}$.
  % This, together with the fact that every time Algorithm \ref{Alg2} increases $ \nu_k $, it does so by a constant factor, implies that after some iterations, such as $k_1$, $ \nu_k $ will remain constant at a value of $\bar{\nu}$.
  Currently, (\ref{penapara2}) and (\ref{repred4}) suggest that 
$
    \text{pred}_k (d_k) \geq  \text{tpred}_k (t_k) - \gamma_1^{\prime} \text{npred}_k (n_k) \geq \text{tpred}_k (t_k) - \cfrac{ \gamma_1^{\prime} }{ \delta \nu_k } \text{pred}_k (d_k).   
$
  The result (\ref{predgapttpred}) holds with $ 1/\gamma_{pt}^{\prime} = 1 + \gamma_1^{\prime} / ( \delta \bar{\nu} ) $.
\end{proof}

\subsection{Proof of Lemma \ref{yawfr0}}\label{le12}
\begin{proof}
By Theorem \ref{AYgy0}, $ g_k + y_k \rightarrow 0 $, and thus (\ref{nnpred}) eventually holds at all iterates. Based on Lemma \ref{nuknubar}, we obtain  that $ \nu_k= \bar{\nu} $ for all $ k \geq k_1 $. From ARCBIP we have that
$$
 \phi ( x_k , y_k ; \bar{\nu} ) \leq \phi ( x_{k_1} , y_{k_1} ; \bar{\nu} ) , \; \text{for} \; k \geq k_1.  
$$
% \begin{equation}
%   \phi ( x_k , y_k ; \bar{\nu} ) \leq \phi ( x_{k_1} , y_{k_1} ; \bar{\nu} ) , \; \text{for} \; k \geq k_1. \nonumber
% \end{equation}
Thus
$
  - \mu \sum^m_{i=1}\ln y^{(i)} \leq \phi ( x_{k_1} , y_{k_1} ; \bar{\nu} ) - f_k - \bar{\nu} \| g_k + y_k \|.  
$
Since we assume that $ \{f_k\} $ is bounded below, and $ \{y_k\} $ is bounded, this implies the existence of a vector $ \bar{y} > 0 $, such that $ y_k \geq \bar{y}$ , for $k \geq 1$. Thus, since $ g_k + y_k \rightarrow 0 $, we obtain that $ g_k < 0 $ for large $ k $, thereby $ \{y_k\} $ is bounded away from zero. 
\end{proof}

%\section{Experimental information}\label{secB1}

\end{appendices}

\bibliography{myrefsARCBIPr1}% common bib file

\end{document}